\documentclass[11pt,reqno]{amsart}
\usepackage[body={7in,9in},centering]{geometry}
\usepackage{amsmath,amssymb,amsthm,mathrsfs}
\usepackage{color,xcolor}
\definecolor{cobalt}{RGB}{61,99,181}
\usepackage[colorlinks,citecolor=cobalt,linkcolor=cobalt]{hyperref}
\usepackage{exscale}
\usepackage{relsize}
\newtheorem{thm}{Theorem}[section]
\newtheorem{defi}[thm]{Definition}
\newtheorem{cor}[thm]{Corollary}
\newtheorem{lem}[thm]{Lemma}

\newtheorem{prop}[thm]{Proposition}

\numberwithin{equation}{section}

\date{\today}

\makeatletter

\newcommand{\Rmnum}[1]{\expandafter\@slowromancap\romannumeral #1@}
\makeatother

\begin{document}

\title[Toeplitz algebras over Fock and Bergman spaces]{Toeplitz algebras over Fock and Bergman spaces}

\author[Shengkun Wu]{Shengkun Wu\textsuperscript{1}}
\address{\textsuperscript{1} College of Mathematics and Statistics, Chongqing University, Chongqing, 401331, P. R. China}
\email{shengkunwu@foxmail.com}
\author[Xianfeng Zhao]{Xianfeng Zhao\textsuperscript{2}}
\address{\textsuperscript{2} College of Mathematics and Statistics, Chongqing University, Chongqing, 401331, P. R. China}
\email{xianfengzhao@cqu.edu.cn}

\keywords{ Toeplitz algebras;  Fock spaces;  Bergman spaces.}

\subjclass[2010]{primary 47L80; secondary 30H20, 47B35.}

\begin{abstract}
In this paper, we investigate Toeplitz subalgebras generated by certain class of Toeplitz operators on the $p$-Fock space and the $p$-Bergman space with $1<p<\infty$. Let BUC($\mathbb C^n$) and BUC($\mathbb B_n$) denote the collections of bounded
uniformly continuous functions on $\mathbb C^n$ and $\mathbb B_n$ (the unit ball in $\mathbb C^n$), respectively. On the $p$-Fock space, we show that the Toeplitz algebra which has a translation invariant closed subalgebra of BUC($\mathbb C^n$)  as its set of symbols is linearly generated by Toeplitz operators with the same space of symbols. This  answers an open question recently raised by Fulsche \cite{Robert}. On the $p$-Bergman space, we study Toeplitz algebras with symbols in some translation invariant closed subalgebras of BUC($\mathbb B_n)$. In particular, we obtain that the Toeplitz algebra generated by all Toeplitz operators with symbols in BUC($\mathbb B_n$) is equal to the closed linear space generated by Toeplitz operators with such symbols.  This generalizes the corresponding result for the case of $p=2$ obtained by Xia \cite{Xia2015}, which was proven by a different approach.
\end{abstract} \maketitle

\tableofcontents

\section{Introduction}\label{Intro}
We begin with some basic notations and knowledge about Fock spaces, Bergman spaces and Toeplitz operators on such spaces.
For a positive parameter $t$, let
$$d\mu_{t}(z)=\frac{1}{(\pi t)^n} e^{-\frac{|z|^2}{t}}dV(z)$$
be the Gaussian measure on $\mathbb{C}^n$, where $dV(z)$ denotes the Lebesgue  measure on $\mathbb{C}^n$.   For $1<p<\infty$, the $p$-Fock space $F_t^p$ is the space of entire functions $f$ on $\mathbb C^n$
which are $p$-integrable with respect to the Gaussian measure with a parameter $\frac{2t}{p}>0$, i.e.,
$$\|f\|_{F_t^p}= \Big[\int_{\mathbb{C}^n}|f(z)|^p d\mu_{2t/p}(z)\Big]^{\frac{1}{p}}<\infty.$$
Let $q$ be the conjugate number of $p$, i.e., $\frac{1}{p}+\frac{1}{q}=1$. Then the  dual space of $F_t^p$ is $F_t^q$ and the  duality  pairing  is given by
$$\langle f,g\rangle_t=\int_{\mathbb{C}^n} f(z)\overline{g(z)}d\mu_{t}(z).$$

For $p=2$, it is well-known that $F_t^2$ is a reproducing kernel Hilbert space with the reproducing kernel
$$K^t_z(w)=e^{\frac{w\cdot \overline{z}}{t}},  \ \ \ z, w\in \mathbb C^n,$$
where $$w\cdot \overline{z}=w_1\overline{z_1}+\cdots+w_n\overline{z_n}$$ for $w=(w_1, \cdots, w_n)$ and $z=(z_1, \cdots, z_n)$.
The normalized reproducing kernel is given by
$$k_z^t(w)=\frac{K^t_z(w)}{\|K_z^t\|_{F^2_t}}=e^{\frac{w\cdot \overline{z}}{t}-\frac{1}{2t}|z|^2},\ \ \ z, w\in \mathbb C^n.$$
Moreover, one can check easily  that
$$\|f\|_{F^p_t}=\Big[\frac{1}{(\pi t)^n}\int_{\mathbb{C}^n}|\langle f,k_z^t\rangle_t|^pdV(z)\Big]^{\frac{1}{p}}$$
for $1\leqslant p<\infty$.

With the normalized reproducing kernel for the Fock space $F_t^2$, the  Berezin transform of an operator $T$ on $F_t^p$ is defined by
$$ \widetilde{T}(z)=\langle Tk_z^t,k_z^t\rangle_t,  \ \ \ z\in \mathbb C^n.$$
For $\varphi\in L^{\infty}(\mathbb{C}^n, dV)$,  the Toeplitz operator $T_\varphi$ with symbol $\varphi$ on $F_t^p$  is defined by
$$T_\varphi f(z)=\int_{\mathbb{C}^n}\varphi(w)f(w)\overline{K^t_{z}(w)}d\mu_t(w).$$
Then $T_\varphi$ is bounded on $F_t^p$, i.e., $T_\varphi$ is bounded from $F_t^p$ to $F_t^p$.

Let $\mathbb{B}_n=\{z\in \mathbb C^n: |z|<1\}$ denote the open unit ball in $\mathbb{C}^n$. Let $dv$ denote the normalized Lebesgue measure on $\mathbb B_n$ with the normalization $v(\mathbb B_n) = 1$.  For $1<p<\infty$, the $p$-Bergman space $L_a^p$  is the collection of  all  holomorphic functions that are $p$-integrable with respect to $dv$. The norm on the Banach space $L^p_a$ is given by
$$\|f\|_{p}=\Big[\int_{\mathbb{B}_n}|f(z)|^pdv(z)\Big]^{\frac{ 1}{p}}.$$
Furthermore, the  dual space of $L_a^p$ is $L_a^q$ under the standard  duality  pairing:
$$\langle f,g\rangle=\int_{\mathbb{\mathbb B}_n} f(z)\overline{g(z)}dv(z),$$
where $\frac{1}{p}+\frac{1}{q}=1$. Recall that the reproducing kernel for the Bergman space $L_a^2$ is given by
$$K_{z}(w)=\frac{1}{(1-w\cdot \overline{z})^{n+1}},  \ \ \ z, w\in \mathbb B_n.$$
Similar to the setting of $p$-Fock space, the Toeplitz operator $T_\varphi$ with symbol $\varphi \in L^{\infty}(\mathbb B_n, dv)$ on  $L_a^p$ can  also  be  defined  via the reproducing kernel:
$$T_\varphi f(z)=\int_{\mathbb{B}_n}\varphi (w)f(w)\overline{K_{z}(w)}dv(w),$$
which is bounded on $L_a^p$.

Toeplitz algebras over Bergman spaces and Fock spaces have been investigated  by many authors for a long time, see for example \cite{Englis}, \cite{suarez2005, suarez}, \cite{Bauer}, \cite{XiaZheng}, \cite{wick2014}, \cite{Xia2015, Xia2017, Xia2018}, \cite{Robert} and \cite{Hagger}. Let us fix more notations  before going to review the background about the investigation of Toeplitz algebras over these two function spaces.

By the \emph{full Toeplitz algebra}, we mean that the Banach algebra generated by all Toeplitz operators with (essentially) bounded symbols. Let $\mathcal{J}$ be a subset of $L^\infty(\mathbb C^n, dV)$ and  $\mathcal{T}(\mathcal{J})$ be the Banach algebra generated by
$\{T_\varphi: \varphi\in \mathcal{J}\}.$ In this paper, we shall call  $\mathcal{T}(\mathcal{J})$ the Toeplitz algebra generated by all Toeplitz operators with symbols in  $\mathcal{J}.$
Moreover, we use  $\mathcal{T}_{lin}(\mathcal{J})$ to denote the closed linear space generated by  Toeplitz operators with symbols in $\mathcal{J}$. Let $\mathrm{BUC}(\mathbb{C}^n)$ denote the set of bounded  functions on $\mathbb{C}^n$ which are uniformly continuous with respect to the Euclidean metric. For $f\in F_t^p$ and $z\in \mathbb{C}^n$, the translation $\alpha_z$ is defined by
$$(\alpha_z f)(w):=f(w+z), \ \ \ \ w\in \mathbb C^n.$$
We say that $\mathcal{J}\subset \mathrm{BUC}(\mathbb{C}^n)$ is \emph{translation invariant on $\mathbb C^n$} if $\alpha_zf\in \mathcal{J}$ for all $f\in\mathcal{J}$ and $z\in\mathbb{C}^n$. Note that $\mathrm{BUC}(\mathbb{C}^n)$ is translation invariant. In addition, the Berezin transform of any bounded linear operator on $F^p_t$ is also in $\mathrm{BUC}(\mathbb{C}^n)$, see \cite[Lemma 2.8]{Robert} if necessary.

In the setting of the $p$-Bergman space,  let $\mathcal{I}$ be a subset of $L^\infty(\mathbb B_n, dv)$ and  $\mathcal{T}^b(\mathcal{I})$ be the Banach algebra generated by
$\{T_\varphi: \varphi\in \mathcal{I}\}.$ In the following, we shall call $\mathcal{T}^b(\mathcal{I})$ the Toeplitz algebra generated by all Toeplitz operators with symbols in $\mathcal{I}$. We denote  $\mathcal{T}^b_{lin}(\mathcal{I})$ by the closed linear space generated  Toeplitz operators  with symbols in $\mathcal{I}$.  We use  $\mathrm{BUC}(\mathbb{B}_n)$ to denote the set of bounded  functions on $\mathbb B_n$ which are uniformly continuous with respect to the \emph{Bergman  metric} (which will be introduced in Section \ref{Bergman-I-R}). For $f\in\mathrm{BUC}(\mathbb{B}_n)$ and $z\in \mathbb B_n$, we define $\tau_z$ on $\mathrm{BUC}(\mathbb{B}_n)$ as
\begin{align}\label{tau}
(\tau_zf)(u)=f(\varphi_{u}(z)), \ \ \ u\in \mathbb B_n,
\end{align}
where $\varphi_u$ is the M\"{o}bius transform of $\mathbb B_n$ that interchanges 0 and $u$. It is worth noting that $\tau_zf$ is not defined by $\tau_zf=f\circ \varphi_z$. However,  this ``unnatural" definition of $\tau_z$ in (\ref{tau}) plays an important role in the study of some Toeplitz subalgebras over the Bergman space of $\mathbb B_n$. For a subset $\mathcal{I}$ of  $\mathrm{BUC}(\mathbb{B}_n)$, we say that  $\mathcal{I}$ is \emph{translation invariant on $\mathbb B_n$} if $\tau_zf\in \mathcal{I}$ for all $f\in \mathcal{I}$ and $z\in \mathbb{B}_n$.

In 2007, Su\'{a}rez \cite{suarez} obtained that the full Toeplitz algebra over the $p$-Bergman space $L_a^p$ equals  the Toeplitz algebra with symbols in BUC$(\mathbb B_n)$ via the $n$-Berezin transform. Then Bauer and Isralowitz \cite{Bauer} established a similar result for the full Toeplitz algebra over weighted Fock spaces.  In \cite{XiaZheng}, Xia and Zheng defined the sufficiently localized operator on $F^2_t$ and  characterized  the compactness of operators in the $C^*$-algebra generated by sufficiently localized operators. Furthermore, they obtained that the $C^*$-algebra generated by sufficiently localized operators contains the full Toeplitz algebra over the Fock space $F^2_t$. Later, Isralowitz, Mitkovski  and  Wick \cite{wick2014} introduced the weakly localized operators on $L_a^p$ (and also $F^p_t$) and showed that such class of  operators forms an algebra and its closure also contains the full Toeplitz algebra over $L_a^p$ ($F^p_t$). Based on the study of the $C^*$-algebra generated by weakly localized operators, Xia \cite{Xia2015} showed that the full  Toeplitz algebra over $L_a^2$ is equal to the norm closure of $\big\{T_\varphi: \varphi\in L^\infty(\mathbb B_n, dv)\big\}$. Indeed, in the cases of the Bergman space $L_a^2$ and the Fock space $F_1^2$, Xia \cite{Xia2015} obtained that the full  Toeplitz algebra  coincides with the $C^*$-algebra generated by the class of weakly localized operators. In 2020, using a correspondence theory
of translation invariant symbol on $\mathbb C^n$ and operator spaces,   Fulsche \cite{Robert} showed  that the full  Toeplitz algebra over the $p$-Fock space $F_t^p$ is the norm closure of all Toeplitz operators with symbols in BUC($\mathbb C^n$), which  generalizes the result obtained by
Xia \cite{Xia2015} in the case of $p=2$.  Moreover, Fulsche \cite{Robert}  proved  that every Toeplitz algebra which
has a translation and $U$-invariant $C^*$-subalgebra of BUC$(\mathbb C^n)$ as its set of symbols is linearly
generated by Toeplitz operators with the same space of symbols. Recently, by using a characterization of Toeplitz algebras over  the $p$-Fock space $F_t^p$ \cite{Robert},  Hagger \cite{Hagger} established that the full Toeplitz algebra over $F_t^p$ coincides with
each of the algebras generated by band-dominated, sufficiently localized and weakly localized
operators, respectively.

In this paper,  we will mainly  consider  the Toeplitz algebras generated by Toeplitz operators with symbols in some translation invariant closed subalgebras of $\mathrm{BUC}(\mathbb{C}^n)$ and $\mathrm{BUC}(\mathbb{B}_n)$ on the $p$-Fock space and the $p$-Bergman space, respectively.
Recall  that the orthonormal  basis of $L^2_a$, and the integral representation in terms of certain sum of rank-one operators over a lattice for a class of Toeplitz operators  are the crucial ingredients in Xia's  approach \cite{Xia2015}. Although the techniques used in \cite{Xia2015} can not be applied directly to treat  Toeplitz operators with translation invariant symbols on the Banach spaces $L_a^p$ and $F_t^p$, some useful ideas (such as the usage of separated sets in the unit ball, weakly localized operators and integral representations for Toeplitz operators) provide  a great inspiration to us. Noting that the automorphism group of $\mathbb{C}^n$ is commutative, thus the convolution of two functions (or two operators)  has some nice properties on Fock spaces and which can be used to approximate an operator in the Toeplitz algebra  by a sequence of Toeplitz operators \cite{Robert}. However, the automorphism group of $\mathbb{B}_n$ is not commutative, so the techniques used in \cite{Robert}  may do not work for the case of Bergman spaces over the unit ball.

In order to characterize  Toeplitz algebras  with symbols in a subset of BUC$(\mathbb C^n)$ (BUC$(\mathbb B_n)$) over the $p$-Fock space ($p$-Bergman space), we will first establish an  integral representation for weakly localized operators on the $p$-Fock space ($p$-Bergman space).  Based on this integral representation, we are able to further study  weakly localized operators on $F_t^p$ and  $L_a^p$   via the Berezin transform. We now give a short outline of the rest of the paper.  In Section \ref{Fock}, we show  that the Toeplitz algebra generated by Toeplitz operators on the $p$-Fock space $F_t^p$ with symbols in a translation invariant closed subalgebra of BUC$(\mathbb C^n)$ is linearly generated by Toeplitz operators with the same space of
symbols, see Theorem \ref{focktop}. This answers an open  question posed in \cite{Robert} by using different
methods. In Section \ref{Bergman-I-R}, we obtain an  integral representation for \emph{$s$-weakly localized operators} (see Definition \ref{s-weakly}) on the $p$-Bergman space $L_a^p$, see Theorem \ref{integralrepresantation}. Then we apply this
integral representation to study Toeplitz algebras with symbols in some  translation invariant subalgebra $\mathcal I\subset \mathrm{BUC}(\mathbb{B}_n)$ in Section \ref{Bergman-T-A}. In particular, we obtain in Theorem \ref{pbergthm}  that $\mathcal{T}^b[\mathrm{BUC}(\mathbb{B}_n)]$ and $\mathcal{T}^b_{lin}{[\mathrm{BUC}(\mathbb{B}_n)]}$ are both equal to the norm closure of the collection of $s$-weakly localized operators on $L_a^p$;  $\mathcal{T}^b[C_0(\mathbb{B}_n)]$ and $\mathcal{T}^b_{lin}[C_0(\mathbb{B}_n)]$ are both equal to the ideal of compact operators on $L_a^p$, where $C_0(\mathbb{B}_n)$ is a (translation invariant) subalgebra of $\mathrm{BUC}(\mathbb B_n)$ consisting of functions $f$ with $f(z)\rightarrow 0$ as $|z|\rightarrow 1$. This  generalizes a result obtained by Xia in the case $p=2$, see \cite[Theorem 1.5]{Xia2015}.

In the following, $T^*$ denotes the Banach space adjoint of the bounded linear operator  $T$ on $F_t^p$ or $L_a^p$. In addition,  the notation $A\lesssim B$ for two nonnegative quantities $A$ and $B$ means that there is some inessential  constant $C>0$ such that $A \leqslant CB$.

\section{Toeplitz algebras over the $p$-Fock space}\label{Fock}
This section is devoted to the study of the Toeplitz algebra $\mathcal{T} (\mathcal{J})$ over $F_t^p$ with $1<p<\infty$, where $\mathcal{J}$ is  a translation invariant closed subalgebra of $\mathrm{BUC}(\mathbb{C}^n)$. The main result of this section is the following theorem, which shows  the assumption that the $U$-invariance  of $\mathcal J$ (i.e., $U\mathcal{J}\subset \mathcal{J}$, where $Uf(z)=f(-z)$ for $f\in \mathcal J$ and $z\in \mathbb C^n$) and the self-adjointness of $\mathcal J$ are in fact unnecessary. This  answers an open problem recently raised  by Fulsche in \cite[page 39]{Robert}.
\begin{thm}\label{focktop}
Let $\mathcal{J}$ be a translation invariant closed subalgebra of $\mathrm{BUC}(\mathbb{C}^n)$,  then
$$\mathcal{T}(\mathcal{J})=\mathcal{T}_{lin}(\mathcal{J})$$
holds on $F^p_t$. Moreover, if $\mathcal{J}_1$ is a translation invariant closed ideal of $\mathcal{J}$, then $\mathcal{T}(\mathcal{J}_1)$ is a two-sided ideal in $\mathcal{T}(\mathcal{J})$.
\end{thm}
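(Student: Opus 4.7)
The plan is to establish $\mathcal{T}(\mathcal{J})\subseteq\mathcal{T}_{lin}(\mathcal{J})$, the reverse inclusion being immediate. By bilinearity and norm-continuity of composition, it suffices to prove
\[
T_\varphi T_\psi\in\mathcal{T}_{lin}(\mathcal{J})\qquad\text{for all }\varphi,\psi\in\mathcal{J};
\]
higher products then lie in $\mathcal{T}_{lin}(\mathcal{J})$ by iteration, and finite linear combinations are dense in $\mathcal{T}(\mathcal{J})$. The principal tool is the translation covariance $W_zT_\varphi W_{-z}=T_{\alpha_{-z}\varphi}$ under the Weyl operators $W_z$, together with the Bochner-integral/operator-convolution framework from \cite{Robert}.

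The first step is a Bochner representation of the Berezin transform of the product. Writing out $\widetilde{T_\varphi T_\psi}(z)=\langle T_\varphi T_\psi k_z^t,k_z^t\rangle_t$ through the reproducing kernel and translating the integration variables by $z$, one obtains an identity
\[
\widetilde{T_\varphi T_\psi}(z)=\int_{\mathbb{C}^n}\int_{\mathbb{C}^n}\bigl[(\alpha_u\varphi)\cdot(\alpha_v\psi)\bigr](z)\,G_0(u,v)\,dV(u)\,dV(v),
\]
with an explicit integrable Gaussian kernel $G_0$ independent of $z$. Since $\mathcal{J}$ is a translation-invariant closed subalgebra, every integrand $(\alpha_u\varphi)(\alpha_v\psi)$ lies in $\mathcal{J}$, uniformly bounded in sup-norm by $\|\varphi\|_\infty\|\psi\|_\infty$; hence the displayed integral converges in $\mathrm{BUC}(\mathbb{C}^n)$ and so belongs to $\mathcal{J}$. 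An analogous multi-fold Bochner identity gives $\widetilde{T_{\varphi_1}\cdots T_{\varphi_k}}\in\mathcal{J}$ for any $\varphi_i\in\mathcal{J}$.

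To conclude, I would invoke the correspondence between translation-invariant closed subspaces of $\mathrm{BUC}(\mathbb{C}^n)$ and Weyl-covariant closed subspaces of $\mathcal{L}(F_t^p)$ developed in \cite{Robert}, which in its basic form assigns $\mathcal{J}\leftrightarrow\mathcal{T}_{lin}(\mathcal{J})$ as a bijection and does not require self-adjointness or $U$-invariance of $\mathcal{J}$. The algebra $\mathcal{T}(\mathcal{J})$ is itself Weyl-covariant, since its generators are permuted by Weyl conjugation; its Berezin-symbol space $\overline{\mathrm{span}}\{\widetilde{S}:S\in\mathcal{T}(\mathcal{J})\}$ is contained in $\mathcal{J}$ by the preceding step, and it obviously contains the symbol space of $\mathcal{T}_{lin}(\mathcal{J})$. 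Equality of symbol spaces then forces $\mathcal{T}(\mathcal{J})=\mathcal{T}_{lin}(\mathcal{J})$ by the injectivity of the correspondence, or equivalently by exhibiting a norm-convergent approximation $T_{\Phi_n}\to T_\varphi T_\psi$ with $\Phi_n\in\mathcal{J}$ obtained from $\widetilde{T_\varphi T_\psi}$ via Gaussian-convolution corrections that preserve $\mathcal{J}$.

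For the ideal claim, the same integral representation shows that when $\varphi\in\mathcal{J}$ and $\psi\in\mathcal{J}_1$, each integrand $(\alpha_u\varphi)(\alpha_v\psi)$ belongs to $\mathcal{J}_1$, because $\alpha_v\psi\in\mathcal{J}_1$ by translation invariance and $\mathcal{J}_1$ absorbs multiplication by $\alpha_u\varphi\in\mathcal{J}$; hence $\widetilde{T_\varphi T_\psi}\in\mathcal{J}_1$, and applying the first part of the theorem to $\mathcal{J}_1$ (itself a translation-invariant closed subalgebra) gives $T_\varphi T_\psi\in\mathcal{T}_{lin}(\mathcal{J}_1)=\mathcal{T}(\mathcal{J}_1)$. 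Bilinearity and density then deliver the two-sided ideal property. The hardest point I anticipate is the recovery half of the correspondence: Fulsche's original argument exploited a $C^*$-structure that used $\overline{\mathcal{J}}\subseteq\mathcal{J}$ and $U\mathcal{J}\subseteq\mathcal{J}$, so removing those hypotheses requires replacing that step by direct Banach-algebra estimates built on the operator-convolution identity $g*_tT_\varphi=T_{g*\varphi}$, Gaussian approximate identities in $L^1(\mathbb{C}^n)$, and the norm-continuity of $z\mapsto W_zSW_{-z}$ for $S$ in the Toeplitz algebra.
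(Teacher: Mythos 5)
Your symbol-level computations are fine: the Bochner identity
\[
\widetilde{T_\varphi T_\psi}(z)=\langle T_{\alpha_z\psi}1,\,T_{\overline{\alpha_z\varphi}}1\rangle_t
=\int_{\mathbb{C}^n}\int_{\mathbb{C}^n}(\alpha_\xi\psi)(z)(\alpha_\zeta\varphi)(z)\,\frac{e^{\frac{\zeta\cdot\overline{\xi}}{t}-\frac{|\xi|^2+|\zeta|^2}{t}}}{(\pi t)^{2n}}\,dV(\xi)\,dV(\zeta)
\]
is correct, the kernel is absolutely integrable, and the reduction of the whole theorem to the two-factor case $T_\varphi T_\psi\in\mathcal{T}_{lin}(\mathcal{J})$ is legitimate (once two-fold products lie in the closed span, right-multiplying a convergent sum of Toeplitz operators by $T_{\varphi_k}$ keeps you in the closed span). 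I would only note that your ``analogous multi-fold Bochner identity'' for $k\geqslant 3$ factors does not converge absolutely by the naive Gaussian bound (the decay in the intermediate variables is exactly cancelled), so if you wanted it you would have to argue by induction as the paper does; but with your reduction you do not actually need it.

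The genuine gap is the recovery step, and you have in effect conceded it rather than closed it. Deducing $T_\varphi T_\psi\in\mathcal{T}_{lin}(\mathcal{J})$ from $\widetilde{T_\varphi T_\psi}\in\mathcal{J}$ is exactly the content of the open problem: Fulsche's correspondence, as it exists in \cite{Robert}, establishes the bijection $\mathcal{J}\leftrightarrow\mathcal{T}_{lin}(\mathcal{J})$ at the level needed here only for translation- and $U$-invariant self-adjoint $C^*$-subalgebras, so invoking it ``in its basic form'' for an arbitrary translation-invariant closed subalgebra is circular. Your fallback --- approximating $T_\varphi T_\psi$ by single Toeplitz operators $T_{\Phi_n}$ via ``Gaussian-convolution corrections'' --- is not substantiated: the operator convolution $g*(T_\varphi T_\psi)=\int g(z)\,T_{\alpha_{-z}\varphi}T_{\alpha_{-z}\psi}\,dV(z)$ is an integral of \emph{products}, not a Toeplitz operator, and deconvolving the Berezin (heat) transform is not available. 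The paper fills precisely this hole by two devices you do not have: (i) the integral representation of a weakly localized operator $A$ as a norm limit of $\frac{1}{(\pi t)^{2n}}\int_{B(0,r)}\int_{\mathbb{C}^n}\langle Ak_z^t,k_{z+w}^t\rangle_t\,(k_{z+w}^t\otimes k_z^t)\,dV(z)\,dV(w)$ (Theorem \ref{fockintrep}), and (ii) the polarization lemma (Proposition \ref{fockberezin}) showing that a bounded bilinear map $L$ on $F_t^1\times F_t^1$ with $L(k_z^t,k_z^t)$ in a closed subspace for all $z$ automatically has $L(k_w^t,k_z^t)$ in that subspace for all $z,w$ --- proved by combining real and imaginary directional derivatives in $z$, which is exactly what replaces the adjoint/$U$-symmetry. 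Applied to $h\mapsto\int h(u)(W_uk_w^t)\otimes(W_uk_z^t)\,dV(u)$, this puts each slice of the integral representation into $\mathcal{T}_{lin}(\mathcal{J})$ (the diagonal slice $w=z$ being $(\pi t)^nT_{\alpha_{-z}h}$), and hence $T_\varphi T_\psi$ itself. Without a substitute for this off-diagonal-from-diagonal argument, your proof does not go through.
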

The proof of the above theorem relies on the integral representation for \emph{weakly localized operators} on $F_t^p$.  Let us recall the definition of this class of  operators on the $p$-Fock space $F_t^p$, which was first introduced by Isralowitz, Mitkovski and Wick \cite[Definition 1.1]{wick2014}.
\begin{defi}\label{weaklocal}
Let $T$ be a bounded linear operator  on $F_t^p$ and $T^*$ be the Banach space adjoint of $T$. Then $T$ is said to be  weakly localized if it satisfies the following four conditions:
$$\sup_{z\in \mathbb C^n}\int_{\mathbb C^n}|\langle Tk^t_z,k^t_w\rangle_t|dV(w)<\infty, \text{\quad}\sup_{z\in \mathbb C^n}\int_{\mathbb C^n}|\langle T^*k^t_z,k^t_w\rangle_t|dV(w)<\infty,$$
$$\lim_{r\rightarrow\infty}\sup_{z\in \mathbb C^n}\int_{\mathbb C^n\setminus B(z,r)}|\langle Tk^t_z,k^t_w\rangle_t|dV(w)=0, \text{\quad}\lim_{r\rightarrow\infty}\sup_{z\in \mathbb C}\int_{\mathbb C\setminus B(z,r)}|\langle T^*k^t_z,k^t_w\rangle_t|dV(w)=0,$$
where $B(z,r)$ denotes the ball $\{w\in \mathbb C^n: |w-z|<r\}$ in $\mathbb C^n$ with center $z$ and radius $r$.  We denote the  collection of weakly localized operators on $F_t^p$  by $\mathcal{A}^{p}$.
\end{defi}

As the proof of Theorem \ref{focktop} is long, we will divide the proof into several steps and complete the details in the end of this section. The  main three steps of our approach are the following.
\begin{itemize}
\item  [(1)] First, we establish  an integral representation  for  weakly localized operators on $F_t^p$ (Theorem  \ref{fockintrep});\\
\item [(2)] Based on  the integral representation mentioned above, we  show that a weakly localized operator belongs to the space  $\mathcal{T}_{lin}(\mathcal{J})$ if its  Berezin transform  is in $\mathcal{J}$ (Proposition \ref{fockweak}); \\
\item [(3)] For a finite product of Toeplitz operators with symbols in $\mathcal{J}$,  we  show that its Berezin transform  belongs to $\mathcal{J}$ (Proposition \ref{fockproduct}).
\end{itemize}
Noting that the  finite sum of finite products of Toeplitz operators with symbols in $\mathcal{J}$  is weakly localized, and  $\mathcal{T}_{lin}(\mathcal{J})\subset\mathcal{T}(\mathcal{J})$, then Theorem  \ref{focktop} will follow immediately from (1)-(3).

In order to establish an integral representation for weakly localized operators on  $F^p_t$, first we need to recall some properties of Weyl operators on  Fock spaces. For $z\in \mathbb C^n$, the Weyl operator $W_z$ is defined by
$$ W_zf(w)=k_z^t(w)f(w-z), \ \ \  w\in \mathbb C^n,$$
where $f\in F^p_t$. It is easy to check that the following hold for all $z, w\in \mathbb C^n$: $W_z^*=W_{-z}$ and
\begin{equation}\label{weyl}
W_wW_z=e^{-\mathrm{i} \frac{\mathrm{Im}(w\cdot \overline{z})}{t}}W_{w+z},\ \ \  W_wk^t_z=e^{-\mathrm{i} \frac{\mathrm{Im}(w\cdot \overline{z})}{t}}k^t_{w+z}, \ \ \  \|W_zf\|_{F^p_t}=\|f\|_{F^p_t}.
\end{equation}
For more information on Weyl operators, one can consult Chapter 2 of \cite{Zhu}.

 Let us begin with the following lemma.
\begin{lem}\label{fockint}
Let $T$ be a bounded linear operator on $F^p_t$. Suppose that $h_0\in L^{\infty}(\mathbb{C}^n, dV)$ and $h_1,h_2\in F_t^1$, then we have
$$\sup_{u\in \mathbb C^n}|\langle TW_uh_1,W_uh_2 \rangle_t|\lesssim \|T\|~\|h_1\|_{F^1_t}\|h_2\|_{F^1_t}$$
and
$$\bigg|\int_{\mathbb{C}^n}h_0(u) \langle f,W_u h_1\rangle_t\langle W_uh_2,g\rangle_t dV(u)\bigg|\lesssim \|h_0\|_{\infty}\|h_1\|_{F^1_t}\|h_2\|_{F^1_t}\|f\|_{F^p_t}\|g\|_{F^q_t}$$
for all $f\in F^p_t$ and $g\in F^q_t$.
\end{lem}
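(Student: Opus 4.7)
The plan is to reduce both bounds to standard $L^p$ machinery---H\"older duality for the first, Young's convolution inequality for the second---after two preliminary observations about the $p$-Fock spaces. First, $W_u$ acts isometrically on $F^r_t$ for every $r\in[1,\infty)$; the case $r=p$ is recorded in \eqref{weyl} and the same change-of-variables argument gives arbitrary $r$. Second, one has the continuous embedding $F^1_t\hookrightarrow F^r_t$ for every $r\geqslant 1$. I would deduce this from the pointwise bound $|h(z)|\lesssim \|h\|_{F^1_t}\,e^{|z|^2/(2t)}$ for $h\in F^1_t$, established from the reproducing integral $h(z)=\int h(w)\overline{K^t_z(w)}\,d\mu_t(w)$ together with the elementary inequality $|w|\,|z|-\tfrac{1}{2}|w|^2\leqslant \tfrac{1}{2}|z|^2$; writing $|h|^r=|h|^{r-1}\cdot|h|$ and controlling the first factor pointwise then yields $\|h\|_{F^r_t}\lesssim \|h\|_{F^1_t}$.

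Granting these two facts, the first inequality follows at once from the duality pairing of $F^p_t$ and $F^q_t$:
\[
|\langle TW_uh_1, W_uh_2\rangle_t|\leqslant \|T\|\,\|W_uh_1\|_{F^p_t}\|W_uh_2\|_{F^q_t}=\|T\|\,\|h_1\|_{F^p_t}\|h_2\|_{F^q_t}\lesssim \|T\|\,\|h_1\|_{F^1_t}\|h_2\|_{F^1_t},
\]
uniformly in $u\in\mathbb{C}^n$.

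For the second inequality the central idea is to unveil a convolution structure. Using $W_uh_1(w)=k^t_u(w)h_1(w-u)$ and the square-completion identity $|k^t_u(w)|\,e^{-|w|^2/t}=e^{-|w|^2/(2t)}\,e^{-|w-u|^2/(2t)}$, I would bound
\[
|\langle f, W_u h_1\rangle_t|\;\leqslant\;\int_{\mathbb{C}^n}|f(w)|\,|k^t_u(w)|\,|h_1(w-u)|\,d\mu_t(w)\;\lesssim\;(F\ast\widetilde H_1)(u),
\]
where $F(w)=|f(w)|e^{-|w|^2/(2t)}$, $H_1(w)=|h_1(w)|e^{-|w|^2/(2t)}$ and $\widetilde H_1(v)=H_1(-v)$. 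Young's inequality gives $\|F\ast\widetilde H_1\|_{L^p(dV)}\leqslant \|F\|_{L^p(dV)}\|H_1\|_{L^1(dV)}$, and a direct change of measure identifies these with constant multiples of $\|f\|_{F^p_t}$ and $\|h_1\|_{F^1_t}$ respectively; hence $\|u\mapsto\langle f,W_uh_1\rangle_t\|_{L^p(dV)}\lesssim \|f\|_{F^p_t}\|h_1\|_{F^1_t}$. A parallel argument produces $\|u\mapsto \langle W_uh_2,g\rangle_t\|_{L^q(dV)}\lesssim \|g\|_{F^q_t}\|h_2\|_{F^1_t}$. Pulling $\|h_0\|_\infty$ outside the integral and applying H\"older with conjugate exponents $p$ and $q$ finishes the argument.

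The only nontrivial step is the square-completion identity that recasts the twisted Gaussian integrand as a convolution of two nonnegative functions in the appropriate $L^p$ and $L^1$ spaces; once this algebraic rearrangement is in hand, the remainder is bookkeeping with Young's inequality, H\"older's inequality, and the $F^1_t\hookrightarrow F^r_t$ embedding, and no conceptual obstacle is anticipated.
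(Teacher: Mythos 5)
Your proof is correct. The first inequality is handled exactly as in the paper: the duality pairing of $F^p_t$ and $F^q_t$, the isometry of $W_u$, and the embedding $F^1_t\hookrightarrow F^r_t$ (which is \cite[Theorem 2.10]{Zhu}; your derivation via the pointwise bound $|h(z)|\lesssim\|h\|_{F^1_t}e^{|z|^2/(2t)}$ is the standard proof of that embedding). For the second inequality you take a genuinely different route. The paper inserts the resolution of the identity $I=\frac{1}{(\pi t)^n}\int_{\mathbb C^n}(k^t_z\otimes k^t_z)\,dV(z)$ to expand $\langle f,W_uh_1\rangle_t$ over the kernel frame, and then runs a Schur-type test on the kernel $F(z,w)=\int|\langle k^t_z,W_uh_1\rangle_t|\,|\langle W_uh_2,k^t_w\rangle_t|\,dV(u)$, splitting $F=F^{1/p}F^{1/q}$ and using $\int|\langle\psi,k^t_\lambda\rangle_t|\,dV(\lambda)\simeq\|\psi\|_{F^1_t}$. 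You instead observe the identity $|k^t_u(w)|e^{-|w|^2/t}=e^{-|w|^2/(2t)}e^{-|w-u|^2/(2t)}$ (which is correct), recognize $u\mapsto|\langle f,W_uh_1\rangle_t|$ as dominated by a convolution of an $L^p(dV)$ function with an $L^1(dV)$ function, and close with Young plus H\"older in $u$. Your argument is shorter and more transparent for this lemma; the paper's Schur-test computation has the advantage that the identical kernel manipulation is reused verbatim in the proof of Theorem \ref{fockintrep}, where one must estimate the tail integral over $\mathbb C^n\setminus B(z,r)$ and the convolution shortcut is no longer available. Both proofs are complete and correct.
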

\begin{proof}
First, we have
\begin{align*}
|\langle TW_uh_1,W_uh_2 \rangle_t| &\leqslant  \|T\|~ \|W_uh_1\|_{F^p_t}\|W_uh_2\|_{F^q_t}\\
&=\|T\|~\|h_1\|_{F^p_t}\|h_2\|_{F^q_t}\\
&\lesssim\|T\|~\|h_1\|_{F^1_t}\|h_2\|_{F^1_t},
\end{align*}
where the last inequality follows from \cite[Theorem 2.10]{Zhu}.  Recall that the identity operator can be written as
\begin{equation}\label{IT}
I=\frac{1}{(\pi t)^{n}}\int_{\mathbb C^n} (k_z^t\otimes k_z^t) dV(z)
\end{equation}
on $F_t^p$, where $$(k_z^t\otimes k_z^t)f=\langle f,k_z^t\rangle_t  k_z^t$$
for $f\in F_t^p$. Then we have
$$\langle f,W_u h_1\rangle_t=\frac{1}{(\pi t)^n}\int_{\mathbb{C}^n}\langle f,k_z^t\rangle_t\langle k_z^t, W_u h_1\rangle_t dV(z).$$
This gives us that
\begin{align*}
&\bigg|\int_{\mathbb{C}^n}h_0(u) \langle f,W_u h_1\rangle_t\langle W_uh_2,g\rangle_t dV(u)\bigg|\\
& \lesssim \int_{\mathbb{C}^n}|h_0(u)| \int_{\mathbb{C}^n}|\langle f,k_z^t\rangle_t|~|\langle k_z^t, W_u h_1\rangle_t| dV(z)
\int_{\mathbb{C}^n} |\langle W_uh_2,k^t_w\rangle_t| ~ |\langle k^t_w,g\rangle_t| dV(w) dV(u)\\
& \leqslant\|h_0\|_{\infty}\int_{\mathbb{C}^n} \int_{\mathbb{C}^n}|\langle f,k_z^t\rangle_t|
\bigg(\int_{\mathbb{C}^n} |\langle k_z^t, W_u h_1\rangle_t| ~ |\langle W_uh_2,k^t_w\rangle_t|dV(u)\bigg)|\langle k^t_w,g\rangle_t| dV(w)dV(z).
\end{align*}
Denoting
 $$F(z,w):=\int_{\mathbb{C}^n} |\langle k_z^t, W_u h_1\rangle_t| ~ |\langle W_uh_2,k^t_w\rangle_t|dV(u),$$
then H\"{o}lder's inequality gives
\begin{align*}
&\int_{\mathbb{C}^n} \int_{\mathbb{C}^n}|\langle f,k_z^t\rangle_t|
\bigg(\int_{\mathbb{C}^n} |\langle k_z^t, W_u h_1\rangle_t|~|\langle W_uh_2,k^t_w\rangle_t|dV(u)\bigg)|\langle k^t_w,g\rangle_t| dV(w)dV(z)\\
&=\int_{\mathbb{C}^n} \int_{\mathbb{C}^n}|\langle f,k_z^t\rangle_t| F(z,w)|\langle k^t_w,g\rangle_t| dV(w)dV(z)\\
&=\int_{\mathbb{C}^n} \int_{\mathbb{C}^n}|\langle f,k_z^t\rangle_t| F(z,w)^{\frac{1}{p}}F(z,w)^{\frac{1}{q}}dV(z)|\langle k^t_w,g\rangle_t| dV(w)\\
& \leqslant \bigg[\int_{\mathbb{C}^n}\Big( \int_{\mathbb{C}^n}|\langle f,k_z^t\rangle_t| F(z,w)^{\frac{1}{p}}F(z,w)^{\frac{1}{q}}dV(z)\Big)^pdV(w)\bigg]^{\frac{1}{p}}
\|g\|_{F^q_t}\\
& \leqslant \bigg[\int_{\mathbb{C}^n}\Big( \int_{\mathbb{C}^n}|\langle f,k_z^t\rangle_t|^p F(z,w)dV(z)\Big) \Big(\int_{\mathbb{C}^n}F(z,w)dV(z)\Big)^{\frac{p}{q}}dV(w)\bigg]^{\frac{1}{p}}
\|g\|_{F^q_t}\\
&\leqslant  \|f\|_{F^p_t}\|g\|_{F^q_t} \bigg[\sup_{z\in\mathbb C^n}\int_{\mathbb{C}^n}F(z,w)dV(w)\bigg]^{\frac{1}{p}}\bigg[\sup_{w\in \mathbb C^n}\int_{\mathbb{C}^n}F(z,w)dV(z)\bigg]^{\frac{1}{q}}.
\end{align*}
Furthermore, we have that
\begin{align*}
\sup_{z\in \mathbb C^n}\int_{\mathbb{C}^n}F(z,w)dV(w)
&\leqslant  \sup_{z\in \mathbb C^n} \int_{\mathbb{C}^n}\int_{\mathbb{C}^n} |\langle k_z^t, W_u h_1\rangle_t|~|\langle W_uh_2,k^t_w\rangle_t|dV(u)dV(w)\\
&=\sup_{z\in \mathbb C^n} \int_{\mathbb{C}^n}\int_{\mathbb{C}^n} |\langle k_{z-u}^t, h_1\rangle_t|~|\langle h_2,k^t_{w-u}\rangle_t|dV(u)dV(w)\\
&=\sup_{z\in \mathbb C^n} \int_{\mathbb{C}^n} |\langle k_{z-u}^t, h_1\rangle_t|\int_{\mathbb{C}^n}|\langle h_2,k^t_{w-u}\rangle_t|dV(w)dV(u)\\
&\lesssim \|h_1\|_{F^1_t}\|h_2\|_{F^1_t},
\end{align*}
where the last inequality follows from that
$$\int_{\mathbb{C}^n}|\langle \psi, k^t_{\lambda}\rangle_t|dV(\lambda)=(\pi t)^n \|\psi\|_{F_t^1}$$
for $\psi \in F_t^1$ and $\lambda\in \mathbb C^n$.  This yields that
$$\bigg|\int_{\mathbb{C}^n}h_0(u) \langle f,W_u h_1\rangle_t\langle W_uh_2,g\rangle_t dV(u)\bigg|
\lesssim \|h_0\|_{\infty}\|h_1\|_{F^1_t}\|h_2\|_{F^1_t}\|f\|_{F^p_t}\|g\|_{F^q_t}.$$
This completes the proof of the lemma.
\end{proof}
For each $h_0\in L^{\infty}(\mathbb{C}^n, dV)$ and $h_1,h_2\in F_t^1$, we define the  operator
$$\int_{\mathbb{C}^n}h_0(u) (W_uh_1\otimes W_uh_2) dV(u)$$
on $F_t^p$ by
$$\bigg\langle\int_{\mathbb{C}^n}h_0(u) (W_uh_1\otimes W_uh_2)dV(u)f,~g \bigg\rangle_t=\int_{\mathbb{C}^n}h_0(u) \langle f,W_uh_2\rangle_t
\langle W_uh_1 ,g \rangle_t dV(u),$$
where $f\in F_t^p$ and $g\in F_t^q$.
Note that Lemma \ref{fockint} guarantees  that this operator is bounded.

The following lemma is elementary, but we include a proof here for the sake of completeness.
\begin{lem}\label{fockdiff}
For any $r>0$ and $w,w'\in B(0,r)$, we have  that
\begin{equation*}
\|k^t_w-k^t_{w'}\|_{F^1_t}\leqslant C_r |w-w'|
\end{equation*}
for some positive constant $C_r$ depending  only on $r$.
\end{lem}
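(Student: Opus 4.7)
The plan is to interpolate linearly between $w$ and $w'$ and differentiate the normalized kernel along the segment. Set $w_s = w + s(w'-w)$ for $s\in[0,1]$ and write, for each fixed $u\in\mathbb C^n$,
\[
k_{w'}^t(u)-k_w^t(u)=\int_0^1 \frac{d}{ds}k_{w_s}^t(u)\, ds.
\]
Since $k_{w_s}^t(u)=\exp\!\bigl(\tfrac{u\cdot\overline{w_s}}{t}-\tfrac{|w_s|^2}{2t}\bigr)$, a direct differentiation gives
\[
\frac{d}{ds}k_{w_s}^t(u)=\Bigl(\tfrac{u\cdot\overline{(w'-w)}}{t}-\tfrac{1}{2t}\tfrac{d}{ds}|w_s|^2\Bigr)k_{w_s}^t(u),
\]
and $\bigl|\tfrac{d}{ds}|w_s|^2\bigr|\le 2|w'-w|\,|w_s|$. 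For $w,w'\in B(0,r)$ one has $|w_s|\le r$, so the bracket is pointwise bounded by $|w'-w|(|u|+2r)/t$.

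Next I would take absolute values, apply Minkowski's (or Fubini's) inequality to push the $L^1$-norm inside the $s$-integral, and reduce the problem to estimating
\[
\int_{\mathbb C^n}(|u|+2r)\,|k_{w_s}^t(u)|\,d\mu_{2t}(u).
\]
Using the identity $|k_z^t(u)|\,e^{-|u|^2/(2t)}=e^{-|u-z|^2/(2t)}$ (a standard consequence of completing the square in $2\operatorname{Re}(u\cdot\bar z)-|z|^2$), this integral becomes $(2\pi t)^{-n}\int_{\mathbb C^n}(|u|+2r)e^{-|u-w_s|^2/(2t)}dV(u)$. After the translation $v=u-w_s$ and the triangle inequality $|v+w_s|\le|v|+r$, it is dominated by $3r$ plus a constant (depending only on $t,n$) coming from the first absolute moment of the Gaussian. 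This gives a bound of the form $C(r,t,n)$ independent of $s$, and integrating in $s$ yields
\[
\|k_w^t-k_{w'}^t\|_{F_t^1}\le C_r\,|w-w'|,
\]
as required.

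There is no real obstacle here: the only step that needs care is justifying the interchange of the $s$-integration and the $F_t^1$-norm (straightforward by Fubini once the pointwise bound above is in place) and keeping track that the resulting constant depends only on $r$ (and on the fixed parameters $t,n$), not on $w,w'$ separately. Everything else is a one-line Gaussian computation.
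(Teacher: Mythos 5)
Your proof is correct, and it takes a genuinely different route from the paper's. The paper exploits the Weyl operator structure: it writes $k^t_w=W_w1$ and $k^t_{w'}=e^{\mathrm{i}\theta}W_wk^t_{w'-w}$ for a suitable unimodular phase, uses that $W_w$ is an isometry on $F^1_t$ to reduce to estimating $\|1-k^t_{w'-w}\|_{F^1_t}$, and then controls that quantity by passing to the $F^2_{2t}$-norm and evaluating inner products of reproducing kernels explicitly. You instead differentiate $s\mapsto k^t_{w_s}$ along the segment joining $w$ to $w'$, bound the logarithmic derivative pointwise by $\tfrac{1}{t}|w'-w|\,(|u|+2r)$, and then use the Gaussian identity $|k^t_z(u)|e^{-|u|^2/(2t)}=e^{-|u-z|^2/(2t)}$ together with the first absolute moment of the Gaussian to get a bound uniform in $s\in[0,1]$; Tonelli/Minkowski then closes the argument. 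Your computations check out: $\tfrac{d}{ds}|w_s|^2=2\operatorname{Re}\bigl(w_s\cdot\overline{(w'-w)}\bigr)$ gives the stated bound, and after the translation $v=u-w_s$ the integral is indeed dominated by a constant depending only on $r$, $t$ and $n$. What your approach buys is that it bypasses the Weyl operator identities and the detour through $F^2_{2t}$ entirely, replacing them with a single one-parameter differentiation and a standard Gaussian moment estimate; it also makes the Lipschitz nature of the bound transparent. The paper's approach, by contrast, is the one that meshes with the Weyl-operator formalism used throughout Section 2, which is presumably why the authors chose it. Either proof is acceptable; just make sure to state explicitly that the interchange of the $s$-integral with the $F^1_t$-norm is justified by Tonelli's theorem applied to the nonnegative integrand $\bigl|\tfrac{d}{ds}k^t_{w_s}(u)\bigr|e^{-|u|^2/(2t)}$.
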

\begin{proof}
Using the definition of $W_w$ and (\ref{weyl}), we have
\begin{align*}
\|k^t_w-k^t_{w'}\|_{F^1_t}&=\Big\|W_w 1-W_w e^{\mathrm{i}\frac{\mathrm{Im}[w\cdot (\overline{w'-w})]}{t}}  k^t_{w'-w}\Big\|_{F^1_t}\\
&=\Big\|1-e^{\mathrm{i}\frac{\mathrm{Im}[w\cdot (\overline{w'-w})]}{t}}  k^t_{w'-w}\Big\|_{F^1_t}\\
&\leqslant  \Big|1-e^{\mathrm{i}\frac{\mathrm{Im}[w\cdot (\overline{w'-w})]}{t}}\Big| + \|1- k^t_{w'-w}\|_{F^1_t}\\
&\leqslant  \Big|1-e^{\mathrm{i}\frac{\mathrm{Im}[w\cdot (\overline{w'-w})]}{t}}\Big|+ \|1- k^t_{w'-w}\|_{F^2_{2t}}\\
& \leqslant  \Big|1-e^{\mathrm{i}\frac{\mathrm{Im}[w\cdot (\overline{w'-w})]}{t}}\Big| + \Big\|1- k_{2w'-2w}^{2t} e^{\frac{|w'-w|^2}{2t}}\Big\|_{F^2_{2t}} \\
& \leqslant \Big|1-e^{\mathrm{i}\frac{\mathrm{Im}[w\cdot (\overline{w'-w})]}{t}}\Big| + \Big|1-e^{\frac{|w'-w|^2}{2t}}\Big|+e^{\frac{|w'-w|^2}{2t}}\|1- k_{2w'-2w}^{2t}\|_{F^2_{2t}}\\
&=\Big|1-e^{\mathrm{i}\frac{\mathrm{Im}[w\cdot (\overline{w'-w})]}{t}}\Big| + \Big|1-e^{\frac{|w'-w|^2}{2t}}\Big|+e^{\frac{|w'-w|^2}{2t}}\Big|2-2e^{\frac{|w-w'|^2}{t}}\Big|\\
& \leqslant  C_r |w-w'|,
\end{align*}
where the third inequality follows from that $k_{w'-w}^t=k_{2w'-2w}^{2t} e^{\frac{|w'-w|^2}{2t}}$.
\end{proof}
Based on the  previous two lemmas, we are able to establish an integral representation for weakly localized operators on $F^p_t$.
\begin{thm}\label{fockintrep}
Let $A$ be a bounded linear operator on $F^p_t$. Then for each $r>0$, the mapping
$$w \mapsto \int_{\mathbb{C}^n}\langle AW_zk_0^t,W_{z}k^t_{w}\rangle_t (W_{z}k^t_{w})\otimes (W_zk_0^t) dV(z)= \int_{\mathbb{C}^n}\langle Ak_z^t,k^t_{z+w}\rangle_t (k^t_{z+w}\otimes k_z^t) dV(z)$$
is uniformly continuous and  uniformly  bounded from $B(0,r)$ to the set of bounded linear operators on $F^p_t$.
Moreover, the integral
$$\int_{B(0,r)}\int_{\mathbb{C}^n}\langle Ak_z^t,k^t_{z+w}\rangle_t \big(k^t_{z+w}\otimes k_z^t\big) dV(z)dV(w)$$
is convergent in the norm topology. Furthermore, if $A$ is  weakly localized on $F^p_t$, then
$$\frac{1}{(\pi t)^{2n}}\int_{B(0,r)}\int_{\mathbb{C}^n}\langle Ak_z^t,k^t_{z+w}\rangle_t \big(k^t_{z+w}\otimes k_z^t\big) dV(z)dV(w)$$
converges to $A$ in norm as $r\rightarrow\infty$.
\end{thm}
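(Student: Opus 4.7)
The plan is to establish the three assertions in sequence, with the last being the substantive one. As a preliminary, note that $W_z k_0^t = k_z^t$ and $W_z k_w^t = e^{-\mathrm{i}\mathrm{Im}(z\cdot\overline{w})/t} k_{z+w}^t$ by (\ref{weyl}), and the phase factors that appear in the inner product $\langle AW_z k_0^t, W_z k_w^t\rangle_t$ and in the rank-one operator $(W_z k_w^t)\otimes (W_z k_0^t)$ are complex conjugates of one another and cancel, so the two displays in the theorem represent the same operator-valued integrand in $z$.

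For the uniform estimates of part 1, I would rewrite the operator at parameter $w$ as $\int_{\mathbb{C}^n} h_0^{(w)}(z)\,(W_z k_w^t \otimes W_z k_0^t)\, dV(z)$ with $h_0^{(w)}(z) = \langle A W_z k_0^t, W_z k_w^t\rangle_t$, and then apply Lemma \ref{fockint} twice: its first statement controls $\|h_0^{(w)}\|_\infty$ by a multiple of $\|A\|\,\|k_0^t\|_{F^1_t}\|k_w^t\|_{F^1_t}$, and its second statement then bounds the operator norm by a constant times $\|h_0^{(w)}\|_\infty \|k_w^t\|_{F^1_t}\|k_0^t\|_{F^1_t}$; since $\|k_w^t\|_{F^1_t}$ is bounded on $B(0,r)$, uniform boundedness follows. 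For uniform continuity on $\overline{B(0,r)}$ I would split the difference at two parameters $w,w'$ as
\begin{align*}
\mathcal{T}(w) - \mathcal{T}(w') &= \int \langle A k_z^t,\, k_{z+w}^t - k_{z+w'}^t\rangle_t\, (k_{z+w}^t \otimes k_z^t)\, dV(z) \\
&\quad + \int \langle A k_z^t, k_{z+w'}^t\rangle_t\, \bigl[(k_{z+w}^t - k_{z+w'}^t) \otimes k_z^t\bigr]\, dV(z),
\end{align*}
and apply Lemma \ref{fockint} to each summand, paying one factor $\|k_w^t - k_{w'}^t\|_{F^1_t} \leqslant C_r |w-w'|$ supplied by Lemma \ref{fockdiff}. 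Part 2 then follows at once: a uniformly bounded, uniformly continuous operator-valued map from the compact set $\overline{B(0,r)}$ into a Banach space is Bochner integrable, so the iterated integral converges in the norm topology.

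For part 3, I would apply (\ref{IT}) twice (once writing $A = AI$, and once expanding $A k_z^t = I(A k_z^t)$) and then change variables by $u = z + w$ to obtain, in the dual pairing,
\begin{align*}
\langle Af, g\rangle_t = \frac{1}{(\pi t)^{2n}}\int_{\mathbb{C}^n}\int_{\mathbb{C}^n} \langle f, k_z^t\rangle_t\, \langle A k_z^t, k_{z+w}^t\rangle_t\, \langle k_{z+w}^t, g\rangle_t\, dV(z)\, dV(w)
\end{align*}
for $f \in F^p_t$ and $g \in F^q_t$. When $A$ is weakly localized the iterated integral is absolutely convergent (the inner integrals in $z$ and in $u$ are uniformly controlled by the defining conditions of Definition \ref{weaklocal}), so Fubini applies and the difference $\langle (A - T_r) f, g\rangle_t$ reduces to the same expression with $w$ restricted to $\mathbb{C}^n \setminus B(0,r)$; rewriting in the variables $(z,u)$ this becomes a double integral over $\{(z,u): |u-z|>r\}$. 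I would then mirror the H\"older argument from the proof of Lemma \ref{fockint}, now with the tail kernel $F_r(z,u) = \chi_{\{|u-z|>r\}}|\langle A k_z^t, k_u^t\rangle_t|$ in the role of $F(z,w)$, splitting $F_r = F_r^{1/p}\cdot F_r^{1/q}$ and estimating
\begin{align*}
|\langle (A-T_r) f, g\rangle_t| \lesssim \|f\|_{F^p_t}\|g\|_{F^q_t}\, \Bigl[\sup_z \int F_r(z,u)\, dV(u)\Bigr]^{1/p}\,\Bigl[\sup_u \int F_r(z,u)\, dV(z)\Bigr]^{1/q}.
\end{align*}
The first supremum tends to $0$ as $r\to\infty$ directly by the weak localization of $A$, and the second does by the localization of $A^*$, using $\langle A k_z^t, k_u^t\rangle_t = \overline{\langle A^* k_u^t, k_z^t\rangle_t}$. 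Taking the supremum over unit $g\in F^q_t$ converts this pairing bound into $\|A - T_r\|\to 0$.

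The main obstacle will be the careful bookkeeping of part 3: verifying the double-integral representation of $\langle Af,g\rangle_t$ with the correct normalization, justifying the change of order of integration, and symmetrizing the H\"older estimate so that both the weak localization of $A$ and that of $A^*$ are used exactly once (with exponents $1/p$ and $1/q$). Once this balanced split is set up, the remaining convergence is driven cleanly by Definition \ref{weaklocal}.
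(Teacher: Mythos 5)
Your proposal is correct and follows essentially the same route as the paper: the uniform bounds and continuity on $B(0,r)$ come from Lemma \ref{fockint} and Lemma \ref{fockdiff}, and the convergence $T_r\to A$ is obtained by expanding $\langle Af,g\rangle_t$ via (\ref{IT}) twice and repeating the H\"older split from Lemma \ref{fockint} on the tail kernel, with the resulting factors controlled by Definition \ref{weaklocal}. The only cosmetic difference is that the paper keeps the second H\"older factor as the full (merely bounded) integral $\sup_w\int_{\mathbb{C}^n}|\langle Ak_z^t,k_w^t\rangle_t|\,dV(z)$ while you restrict both factors to the tail; both variants yield the same conclusion.
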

\begin{proof}
We obtain by  (\ref{weyl}) that
$$
\int_{\mathbb{C}^n}\langle AW_zk_0^t,W_{z}k^t_{w}\rangle_t \big(W_{z}k^t_{w}\otimes W_zk_0^t\big) dV(z)= \int_{\mathbb{C}^n}\langle Ak_z^t,k^t_{z+w}\rangle_t \big(k^t_{z+w}\otimes k_z^t\big) dV(z).$$
Then the first conclusion follows from Lemmas \ref{fockint} and  \ref{fockdiff}. Thus the integral
$$\int_{B(0,r)}\int_{\mathbb{C}^n}\langle Ak_z^t,k^t_{z+w}\rangle_t \big(k^t_{z+w}\otimes k_z^t\big) dV(z)dV(w)$$
is convergent in the norm topology.

Let $A$ be  weakly localized on $F^p_t$. For any $f\in F^p_t$ and $g\in F^q_t$, we have
\begin{align*}
\langle Af,g\rangle_t&=\frac{1}{(\pi t)^n}\int_{\mathbb{C}^n}\langle Af,k_w^t\rangle_t \langle k_w^t,g\rangle_t dV(w)\\
&=\frac{1}{(\pi t)^n}\int_{\mathbb{C}^n}\langle f,A^* k_w^t\rangle_t \langle k_w^t,g\rangle_t dV(w)\\
&=\frac{1}{(\pi t)^{2n}}\int_{\mathbb{C}^n}\int_{\mathbb{C}^n}\langle f,k_z^t\rangle_t \langle k_z^t ,A^* k_w^t\rangle_t \langle k_w^t,g\rangle_t dV(z)dV(w)\\
&=\frac{1}{(\pi t)^{2n}}\int_{\mathbb{C}^n}\int_{\mathbb{C}^n}\langle f,k_z^t\rangle_t \langle Ak_z^t , k_{w+z}^t\rangle_t \langle k_{w+z}^t,g\rangle_t dV(w)dV(z).
\end{align*}
Using the same method as in the  proof of Lemma \ref{fockint}, we get
\begin{align*}
&\bigg|\langle Af,g\rangle_t-\Big\langle\frac{1}{(\pi t)^{2n}}\int_{B(0,r)}\int_{\mathbb{C}^n}\langle Ak_z^t,k^t_{z+w}\rangle_t (k^t_{z+w}\otimes k_z^t) dV(z)dV(w)f,g\Big\rangle_t\bigg|\\
&=\frac{1}{(\pi t)^{2n}}\int_{\mathbb C^n\setminus B(0, r)}\int_{\mathbb{C}^n}|\langle f, k_z^t\rangle_t|~|\langle Ak_z^t , k_{w+z}^t\rangle_t|~|\langle k_{w+z}^t,g\rangle_t| dV(z)dV(w)\\
&=\frac{1}{(\pi t)^{2n}}\int_{\mathbb{C}^n}\int_{\mathbb C^n\setminus B(z, r)}|\langle f,k_z^t\rangle_t| ~ |\langle Ak_z^t , k_{w}^t\rangle_t|~|\langle k_{w}^t,g\rangle_t| dV(w)dV(z)\\
& \lesssim \|f\|_{F^p_t}\|g\|_{F^q_t}\Big[\sup_{z\in \mathbb C^n}\int_{\mathbb C^n\setminus B(z, r)}|\langle Ak_z^t , k_{w}^t\rangle_t|dV(w)\Big]^{\frac{1}{p}}
\Big[\sup_{w\in \mathbb C^n}\int_{\mathbb{C}^n}|\langle Ak_z^t , k_{w}^t\rangle_t|dV(z)\Big]^{\frac{1}{q}}.
\end{align*}
Now the desired conclusion follows from the definition of a weakly localized operator.
\end{proof}

Next, we will establish a sufficient condition for  a weakly localized operator  to be in the space $\mathcal{T}_{lin}(\mathcal{J})$ via the Berezin transform. Before going further, we still need some preparations.

Let $L$ be a bilinear map from $F^1_t\times F^1_t $ to some  Banach space $\mathcal{B}$. Suppose that $L(f,g)$ is linear with respect to $f$ and conjugate linear with respect to $g$. We say $L$ is bounded if
$$\|L(f,g)\|_{\mathcal B}\lesssim \|f\|_{F^1_t}\|g\|_{F^1_t}$$
for all $f, g\in F_t^1$.  For any multi-index $a=(a_1,\cdots,a_n)$ with $a_j\geqslant  0$ and $z\in \mathbb{C}^n$, we denote
$$z^a=z_1^{a_1}\cdots z_n^{a_n}, \ \ \ \ \ \ \ \ \ \  a!=a_1!\cdots a_n!$$
and $|a|=a_1+a_2+\cdots+a_n$.

With the notations above, we have the following proposition.
\begin{prop}\label{fockberezin}
Let $L$ be a bounded bilinear map from $F^1_t\times F^1_t $ to a Banach space $\mathcal{B}$. Let
$\mathcal{B}_1$ be a closed subspace of $\mathcal{B}$. If $L(k^t_z,k^t_z)\in \mathcal{B}_1$ for any $z\in \mathbb{C}^n$, then $L(k^t_w,k^t_z)\in \mathcal{B}_1$ for all $z,w\in \mathbb{C}^n$.
\end{prop}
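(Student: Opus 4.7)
The plan is to expand each reproducing kernel as a power series in the standard monomial basis and then to reduce the problem to a Hahn--Banach / identity-principle argument. To begin, I would remove the Gaussian normalization by writing $k_z^t = e^{-|z|^2/(2t)} K_z^t$, where $K_z^t(u) = e^{u\cdot\bar z/t}$. Since $\mathcal B_1$ is a linear subspace, the assumption $L(k_z^t, k_z^t) \in \mathcal B_1$ for every $z$ is equivalent to $L(K_z^t, K_z^t) \in \mathcal B_1$ for every $z$, and the desired conclusion is equivalent to $L(K_w^t, K_z^t) \in \mathcal B_1$ for all $w, z \in \mathbb C^n$.

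I would then invoke the absolutely convergent expansion
\[
K_z^t(u) = \sum_{a} \frac{\bar z^{\,a}}{a!\, t^{|a|}}\, u^a,
\]
and argue that this series converges to $K_z^t$ in the norm of $F_t^1$. This requires a short dominated-convergence argument, since the partial sums are bounded in modulus by $\prod_j e^{|u_j||z_j|/t}$, which is integrable against the Gaussian weight defining $F_t^1$. Because $L$ is bounded and is linear (resp.\ conjugate-linear) in its first (resp.\ second) slot, this expansion may be inserted into each argument of $L$ to yield
\[
L(K_w^t, K_z^t) = \sum_{a,b} \frac{\bar w^{\,a}\, z^{b}}{a!\, b!\, t^{|a|+|b|}}\, L(e_a, e_b),
\]
where $e_a(u):= u^a$, with the series converging in $\mathcal B$.

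To finish, I would dualize. By the Hahn--Banach theorem, it suffices to prove that every $\phi \in \mathcal B^*$ annihilating $\mathcal B_1$ satisfies $\phi(L(K_w^t, K_z^t)) = 0$ for all $w, z$. For such a $\phi$, the hypothesis gives $\phi(L(K_z^t, K_z^t)) = 0$ for all $z \in \mathbb C^n$, which, via the expansion above at $w = z$, becomes the pointwise scalar identity
\[
\sum_{a,b} \frac{\phi(L(e_a, e_b))}{a!\, b!\, t^{|a|+|b|}}\, \bar z^{\,a} z^{b} = 0 \qquad (z \in \mathbb C^n).
\]
Applying $\partial_z^{b}\,\partial_{\bar z}^{a}$ and evaluating at $z=0$ (i.e.\ invoking the linear independence of the monomials $\bar z^{\,a} z^{b}$ on $\mathbb C^n$) forces $\phi(L(e_a,e_b))=0$ for all multi-indices $a,b$, and hence $\phi(L(K_w^t,K_z^t))=0$ for every $w, z$. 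The main obstacle I anticipate is the first technical step, namely verifying norm convergence of the monomial expansion of $K_z^t$ in $F_t^1$ (the orthonormal-basis convergence in $F_t^2$ does not transfer for free, since for $p > 2$ we only have $F_t^1 \hookrightarrow F_t^p$, not the other direction), and justifying the interchange of $L$ with the infinite sum; the remaining steps are a direct use of Hahn--Banach together with the identity principle for real-analytic functions.
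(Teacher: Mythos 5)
Your proof is correct, and after the common first step (reducing to $K_z^t$, expanding in monomials $e_a(u)=u^a$, and checking $F_t^1$-norm convergence by dominated convergence — exactly as in the paper) it diverges from the paper's argument in an interesting way. The paper stays entirely inside $\mathcal B_1$: it proves by induction on $|a|+|b|$ that $L(K_{z,a}^t,K_{z,b}^t)\in\mathcal B_1$, where $K_{z,a}^t=\partial^a_{\bar u}K_u^t|_{u=z}$, by forming difference quotients of the diagonal data in a real and in an imaginary coordinate direction; the two resulting limits give $L(K_{z,a}^t,K_{z,b}^t)\pm L(K_{z,a'}^t,K_{z,b+e}^t)\in\mathcal B_1$, and adding them isolates the desired term. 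You instead linearize via Hahn--Banach and reduce to the scalar identity $\sum_{a,b}c_ac_b\,\phi(L(e_a,e_b))\,\bar z^az^b\equiv 0$, then extract coefficients — this is the classical polarization principle that a sesquilinear form is determined by its diagonal, and it makes the mechanism of the proposition more transparent. What your route costs is one extra justification that you only gesture at: to differentiate the double series term by term (or to invoke coefficient uniqueness) you need locally uniform absolute convergence, i.e.
\begin{equation*}
\sum_{a,b}|c_a||c_b|\,|z|^{|a|+|b|}\,\|L(e_a,e_b)\|_{\mathcal B}
\;\lesssim\;\Bigl(\sum_a |c_a|\,|z^a|\,\|e_a\|_{F_t^1}\Bigr)^2
\;\leqslant\;\Bigl(\int_{\mathbb C^n}\prod_{j=1}^n e^{|z_j||u_j|/t}\,d\mu_{2t}(u)\Bigr)^2<\infty,
\end{equation*}
which follows from the boundedness of $L$ and Tonelli; with that inserted, the series is a genuine convergent power series in $(z,\bar z)$ and your coefficient extraction is legitimate. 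The paper's induction avoids both duality and any manipulation of infinite double series, at the price of a lengthier bookkeeping argument; the two approaches are otherwise of equal strength.
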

\begin{proof}
We only need to show that  $L(K^t_w,K^t_z)\in \mathcal{B}_1$
 if $$L(K^t_z,K^t_z)\in \mathcal{B}_1$$
 for all  $z,w\in B(0,r)$ with $r>0$.

For any multi-index $a=(a_1,\cdots,a_n)$, let $g_a(\xi)={\xi}^{a}$, which is in $F^1_t$.
We know that $K^t_z(\xi)$ has a series expansion
$$K^t_z(\xi)=\sum_{a}c_a \overline{z}^ag_a(\xi)$$
with $c_a>0$.  Noting that
$$\lim_{m\rightarrow \infty}\sum_{|a|\leqslant m}c_a \overline{z}^ag_a(\xi)=K^t_z(\xi)$$
and
$$\Big|K_z^t(\xi)-\sum_{|a|\leqslant  m}c_a \overline{z}^ag_a(\xi)\Big|\leqslant  e^{\frac{r|\xi|}{t}}+e^{\frac{r|\xi_1|+\cdots+r|\xi_n|}{t}},$$
we conclude  by the dominated convergence theorem that
$$\lim_{m\rightarrow\infty}\Big\|K_z^t-\sum_{|a|\leqslant  m}c_a \overline{z}^ag_a\Big\|_{F^1_t}=0$$
for each $z\in \mathbb C^n$. Thus, it is enough to show that $L(g_a,g_b)\in \mathcal{B}_1$ for any multi-indices $a$ and $b$. Let
$$K_{z,a}^t:=\frac{\partial^a K_{u}^t}{\partial \overline{u}^a}\Big|_{u=z}.$$
Since $$\frac{\partial^a K_{z}^t}{\partial \overline{z}^a}\Big|_{z=0}=c_a a! g_a,$$
it is sufficient  to show that $L(K_{0,a}^t,K_{0,b}^t)\in \mathcal{B}_1$ for any two  multi-indices $a$, $b$. Let us prove this by induction. But to get the induction argument to work, we need to show that  $L(K_{z,a}^t,K_{z,b}^t)\in \mathcal{B}_1$ for all  multi-indices $a$ and $b$, and $z\in \mathbb C^n$.

First, when $a=b=0$, we have $$L(K_{z,0}^t,K_{z,0}^t)=L(K_{z}^t,K_{z}^t)\in \mathcal{B}_1.$$ Suppose that
$L(K_{z,a}^t,K_{z,b}^t)\in \mathcal{B}_1$
if  $|a+b|\leqslant  m$. Now we are going to show that $L(K_{z,a}^t,K_{z,b}^t)\in \mathcal{B}_1$ if $|a+b|=m+1$. Without loss of generality, we may assume that
$a_1\geqslant 1$. Let $e=(1,0,\cdots,0)$. Then there is  a multi-index $a'$ such that $a'+e=a$. For any $s>0$ satisfying  $z+se\in B(0,r)$, we have
\begin{align*}
&\frac{1}{s}\Big[L(K_{z+se,a'}^t,K_{z+se,b}^t)-L(K_{z,a'}^t,K_{z,b}^t)\Big]\\
&=L\bigg(\frac{K_{z+se,a'}^t-K_{z,a'}^t}{s},K_{z+se,b}^t\bigg)+L\bigg(K_{z,a'}^t,\frac{K_{z+se,b}^t-K_{z,b}^t}{s}\bigg).
\end{align*}
By the dominated convergence theorem again, we have
$$\lim_{s\rightarrow0}\bigg\|\frac{K_{z+se,a'}^t-K_{z,a'}^t}{s}-K_{z,a}^t\bigg\|_{F^1_t}=0
 \ \ \ \ \ \text{ and } \ \ \ \ \ \lim_{s\rightarrow0}\bigg\| K_{z+se,b}^t-K_{z,b}^t\bigg\|_{F^1_t}=0,$$
to obtain
\begin{align*}
L(K_{z,a}^t,K_{z,b}^t)+L(K_{z,a'}^t,K_{z,b+e}^t)
&=\lim_{s\rightarrow0}L\bigg(\frac{K_{z+se,a'}^t-K_{z,a'}^t}{s},K_{z+se,b}^t\bigg)+
\lim_{s\rightarrow0}L\bigg(K_{z,a'}^t,\frac{K_{z+se,b}^t-K_{z,b}^t}{s}\bigg)\\
&=\lim_{s\rightarrow0}\frac{1}{s}\Big[L(K_{z+se,a'}^t,K_{z+se,b}^t)-L(K_{z,a'}^t,K_{z,b}^t)\Big]\in \mathcal{B}_1.
\end{align*}

On the other hand, we have
\begin{align*}
&\frac{1}{\mathrm{i}s}\Big[L(K_{z+\mathrm{i}se,a'}^t,K_{z+\mathrm{i}se,b}^t)-L(K_{z,a'}^t,K_{z,b}^t)\Big]\\
&=L\bigg(\frac{K_{z+\mathrm{i}se,a'}^t-K_{z,a'}^t}{\mathrm{i}s},K_{z+\mathrm{i}se,b}^t\bigg)-L\bigg(K_{z,a'}^t,\frac{K_{z+\mathrm{i}se,b}^t-K_{z,b}^t}{\mathrm{i}s}\bigg).
\end{align*}
Similarly, we have
$$L(K_{z,a}^t,K_{z,b}^t)-L(K_{z,a'}^t,K_{z,b+e}^t)\in \mathcal{B}_1.$$
Therefore, we obtain that
$$2L(K_{z,a}^t,K_{z,b}^t)=L(K_{z,a}^t,K_{z,b}^t)-L(K_{z,a'}^t,K_{z,b+e}^t)+L(K_{z,a}^t,K_{z,b}^t)+L(K_{z,a'}^t,K_{z,b+e}^t)$$
belongs to $\mathcal{B}_1$. This finishes the proof of Proposition \ref{fockberezin}.
\end{proof}

In view of Proposition \ref{fockberezin}, we obtain that $\big\langle AW_{(\cdot)}k^t_z,W_{(\cdot)}k^t_w\big\rangle_t\in \mathcal{J}$  when  the Berezin transform of the operator $A$ is in $\mathcal{J}$, where $W_{(\cdot)}$ is a Weyl operator.
\begin{cor}\label{fockbi}
Let $A$ be a bounded linear operator on $F^p_t$, $z,w\in \mathbb{C}^n$ and $\mathcal{J}$ be a translation invariant closed  subspace of $\mathrm{BUC}(\mathbb{C}^n)$. If the Berezin transform of $A$ is in $\mathcal{J}$, then
$$\big\langle AW_{(\cdot)}k^t_z,W_{(\cdot)}k^t_w\big\rangle_t\in \mathcal{J}.$$
In addition, if $h\in \mathcal{J}$, then
$$\int_{\mathbb{C}^n}h(u)(W_uk^t_w)\otimes (W_uk^t_z)dV(u)\in \mathcal{T}_{lin}(\mathcal{J}).$$
\end{cor}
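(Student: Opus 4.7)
The plan is to derive both assertions as direct consequences of Proposition \ref{fockberezin}, by building a bounded sesquilinear map $L$ in each case whose ``diagonal values'' $L(k_z^t,k_z^t)$ can be recognized as elements of the target closed subspace, so that the proposition promotes this to all pairs $(k_w^t,k_z^t)$.

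For the first claim, define $L_1: F_t^1\times F_t^1 \to L^\infty(\mathbb C^n, dV)$ by
$$L_1(f,g)(u)=\langle A W_uf,\, W_ug\rangle_t.$$
This is linear in $f$ and conjugate linear in $g$, and by the first estimate of Lemma \ref{fockint} we have $\|L_1(f,g)\|_\infty\lesssim \|A\|\,\|f\|_{F^1_t}\|g\|_{F^1_t}$, so $L_1$ is bounded. Using (\ref{weyl}), the diagonal evaluation becomes
$$L_1(k_z^t,k_z^t)(u)=\langle Ak_{u+z}^t, k_{u+z}^t\rangle_t=\widetilde{A}(u+z)=(\alpha_z\widetilde{A})(u),$$
since the unimodular Weyl phases cancel. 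As $\widetilde{A}\in\mathcal{J}$ and $\mathcal{J}$ is translation invariant and closed in $\mathrm{BUC}(\mathbb C^n)\subset L^\infty(\mathbb C^n, dV)$, we get $L_1(k_z^t,k_z^t)\in\mathcal{J}$ for every $z$. Proposition \ref{fockberezin} then forces $L_1(k_w^t,k_z^t)\in\mathcal{J}$ for all $z,w$, which is the first claim (after relabeling).

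For the second claim, define $L_2: F_t^1\times F_t^1 \to \mathcal{B}(F_t^p)$ by
$$L_2(f,g)=\int_{\mathbb C^n}h(u)\,(W_uf)\otimes (W_ug)\,dV(u),$$
interpreted through its sesquilinear form exactly as in the paragraph after Lemma \ref{fockint}. The second estimate of that lemma gives $\|L_2(f,g)\|\lesssim \|h\|_\infty\|f\|_{F^1_t}\|g\|_{F^1_t}$, so $L_2$ is bounded. On the diagonal the Weyl phases again cancel, leaving
$$L_2(k_z^t,k_z^t)=\int_{\mathbb C^n}h(u)\,k_{u+z}^t\otimes k_{u+z}^t\,dV(u)=\int_{\mathbb C^n}h(v-z)\,k_v^t\otimes k_v^t\,dV(v)$$
after the change of variable $v=u+z$. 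A direct computation (or the well-known Fock-space identity $T_\varphi=\frac{1}{(\pi t)^n}\int \varphi(v)\,k_v^t\otimes k_v^t\,dV(v)$, which is analogous to (\ref{IT})) identifies the last integral as $(\pi t)^n T_{\alpha_{-z}h}$. Since $\mathcal{J}$ is translation invariant, $\alpha_{-z}h\in\mathcal{J}$, so $L_2(k_z^t,k_z^t)\in \mathcal{T}_{lin}(\mathcal{J})$. Now apply Proposition \ref{fockberezin} with $\mathcal{B}=\mathcal{B}(F_t^p)$ and $\mathcal{B}_1=\mathcal{T}_{lin}(\mathcal{J})$, which is closed by definition, to conclude $L_2(k_w^t,k_z^t)\in \mathcal{T}_{lin}(\mathcal{J})$ for all $z,w$.

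The only nontrivial step is the diagonal identification in the second claim: one must recognize the operator-valued integral as a scalar multiple of a Toeplitz operator with a translated symbol. Once this is in hand, translation invariance of $\mathcal{J}$ feeds directly into the hypothesis of Proposition \ref{fockberezin}, and the rest is bookkeeping. The first claim is easier since the Berezin transform's translation behavior under Weyl conjugation is already the defining relation $\widetilde{A}\circ\alpha_z=L_1(k_z^t,k_z^t)$.
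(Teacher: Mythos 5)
Your proposal is correct and follows essentially the same route as the paper: both arguments package the two quantities as bounded (in the sense of Lemma \ref{fockint}) sesquilinear maps on $F_t^1\times F_t^1$, identify the diagonal values as $\alpha_z\widetilde{A}$ and $(\pi t)^n T_{\alpha_{-z}h}$ respectively via the cancellation of the Weyl phases and translation invariance of $\mathcal{J}$, and then invoke Proposition \ref{fockberezin} to pass to the off-diagonal pairs. No gaps.
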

\begin{proof}Since $\mathcal{J}$ is translation invariant and $\widetilde{A}\in \mathcal{J}$, we have
$$\big\langle AW_{(\cdot)}k^t_z,W_{(\cdot)}k^t_z\big\rangle_t=\big\langle Ak^t_{(\cdot)+z},k^t_{(\cdot)+z}\big\rangle_t=\widetilde{A}((\cdot)+z)\in \mathcal{J}.$$
According to Lemma \ref{fockint} and  Proposition \ref{fockberezin}, we get that
$$\big\langle AW_{(\cdot)}k^t_z,W_{(\cdot)}k^t_w\big\rangle_t\in \mathcal{J}.$$

Let $h$ be in $\mathcal{J}$. Then  for any $f\in F^p_t$ and $g\in F^q_t$, we have
\begin{align*}
\Big\langle \int_{\mathbb{C}^n}h(u)(W_uk^t_z)\otimes (W_uk^t_z)dV(u)f,g\Big\rangle_t&
=\int_{\mathbb{C}^n}h(u)\langle f, k^t_{z+u}\rangle_t\langle k^t_{z+u},g\rangle_t dV(u)\\
&=\int_{\mathbb{C}^n}h(u-z)\langle f, k^t_{u}\rangle_t\langle k_u^t,g\rangle_t dV(u)\\
&= (\pi t)^n\langle T_{\alpha_{-z}h}f,g\rangle_t,
\end{align*}
where the translation $\alpha_{-z}$ is defined in Section \ref{Intro}.  Since $\mathcal{J}$ is translation invariant, we have that $\alpha_{-z}h\in \mathcal{J}$ and
$$\int_{\mathbb{C}^n}h(u)(W_uk^t_z)\otimes (W_uk^t_z)dV(u)=(\pi t)^n T_{\alpha_{-z}h}\in \mathcal{T}_{lin}(\mathcal{J}).$$
Using Lemma \ref{fockint} and Proposition  \ref{fockberezin} again, we obtain
$$\int_{\mathbb{C}^n}h(u)(W_uk^t_w)\otimes (W_uk^t_z) dV(u)\in \mathcal{T}_{lin}(\mathcal{J}),$$
to complete the proof of this corollary.
\end{proof}
Combining Theorem \ref{fockintrep} with Corollary \ref{fockbi} yields that a weakly localized operator on $F^p_t$ belongs to $\mathcal{T}_{lin}(\mathcal{J})$ if its Berezin transform is in $\mathcal{J}$.
\begin{prop}\label{fockweak}
Let $A$ be a weakly localized operator on $F^p_t$ and $\mathcal{J}$ be a translation invariant closed subspace of $\mathrm{BUC}(\mathbb{C}^n)$. If the Berezin transform of $A$ is in $\mathcal{J}$, then
$$ A\in \mathcal{T}_{lin}(\mathcal{J}).$$
\end{prop}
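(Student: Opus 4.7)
The plan is to realize $A$ as a norm limit of operators of the form produced by Corollary \ref{fockbi}, using the integral representation of Theorem \ref{fockintrep} as the framework and then interchanging the two integrations.

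First I would rewrite the inner integrand of Theorem \ref{fockintrep} in Weyl form. From (\ref{weyl}) one has $k_z^t = W_z k_0^t$ and $k_{z+w}^t = e^{\mathrm{i}\operatorname{Im}(z\cdot \overline{w})/t}\, W_z k_w^t$, so the unimodular phase cancels between the inner product and the rank-one operator:
\[
\langle A k_z^t, k_{z+w}^t\rangle_t\,(k_{z+w}^t \otimes k_z^t) \;=\; \langle A W_z k_0^t, W_z k_w^t\rangle_t\,(W_z k_w^t \otimes W_z k_0^t).
\]
Fix $w \in B(0,r)$. Since $\widetilde{A}\in\mathcal{J}$ by hypothesis, the first part of Corollary \ref{fockbi} gives
\[
h_w(u) := \langle A W_u k_0^t, W_u k_w^t\rangle_t \;\in\; \mathcal{J}.
\]
Applying the second part of the same corollary with symbol $h_w$ (and taking the parameter $z=0$) then shows that the inner integral
\[
I(w) := \int_{\mathbb{C}^n} h_w(u)\,(W_u k_w^t \otimes W_u k_0^t)\,dV(u)
\]
belongs to $\mathcal{T}_{lin}(\mathcal{J})$ for every $w\in B(0,r)$.

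Next I would promote this to the iterated integral over $B(0,r)\times \mathbb{C}^n$. By Theorem \ref{fockintrep}, the map $w\mapsto I(w)$ is norm continuous and uniformly bounded on $B(0,r)$, so the Bochner integral $\int_{B(0,r)} I(w)\,dV(w)$ exists and is approximated in operator norm by Riemann sums $\sum_j I(w_j)\,V(E_j)$. Each such Riemann sum is a finite linear combination of elements of $\mathcal{T}_{lin}(\mathcal{J})$, so closedness of $\mathcal{T}_{lin}(\mathcal{J})$ forces $\int_{B(0,r)} I(w)\,dV(w) \in \mathcal{T}_{lin}(\mathcal{J})$ for every $r>0$. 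Finally, the norm-convergence statement at the end of Theorem \ref{fockintrep} reads
\[
\frac{1}{(\pi t)^{2n}}\int_{B(0,r)} I(w)\,dV(w)\;\xrightarrow{\;\|\cdot\|\;}\; A \qquad \text{as } r\to\infty,
\]
and one more application of closedness yields $A\in \mathcal{T}_{lin}(\mathcal{J})$.

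The main piece of bookkeeping is the phase cancellation in the first step: one must check that the factor $e^{\mathrm{i}\operatorname{Im}(z\cdot \overline{w})/t}$ supplied by (\ref{weyl}) really cancels between the inner product and the rank-one operator, so that the integrand matches exactly the shape handled by Corollary \ref{fockbi}. After this verification, the remainder of the argument is a routine Bochner-integral approximation carried out inside the closed subspace $\mathcal{T}_{lin}(\mathcal{J})$, and the conclusion is forced by the norm convergence provided by the integral representation.
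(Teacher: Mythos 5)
Your proposal is correct and follows essentially the same route as the paper: the paper's own proof also combines the norm-convergent integral representation of Theorem \ref{fockintrep} with Corollary \ref{fockbi} (applied exactly as you do, first to put the coefficient function in $\mathcal{J}$ and then to place the inner integral in $\mathcal{T}_{lin}(\mathcal{J})$), and concludes by closedness. The phase cancellation you flag is already recorded as an identity in the statement of Theorem \ref{fockintrep}, so your verification simply makes explicit a step the paper takes for granted.
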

\begin{proof}
Since $A$ is  weakly localized, we have by Theorem \ref{fockintrep} that
$$A=\lim_{r\rightarrow\infty}\frac{1}{(\pi t)^{2n}}\int_{B(0,r)}\int_{\mathbb{C}^n}\langle AW_zk_0^t,W_zk^t_{w}\rangle_t (W_zk^t_{w})\otimes (W_zk_0^t) dV(z)dV(w).$$
Now Corollary \ref{fockbi} gives us that
$$\int_{\mathbb{C}^n}\langle AW_zk_0^t,W_zk^t_{w}\rangle_t (W_zk^t_{w})\otimes (W_zk_0^t) dV(z)\in \mathcal{T}_{lin}(\mathcal{J}).$$
This completes the proof.
\end{proof}

Finally, we show in the following that the Berezin transform of the finite product of Toeplitz operators with symbols in $\mathcal{J}$ also belongs to $\mathcal{J}$.
\begin{prop}\label{fockproduct}
Let $\mathcal{J}$ be a translation invariant closed subalgebra of $\mathrm{BUC}(\mathbb{C}^n)$. Suppose that $\varphi_1,\cdots,\varphi_m\in \mathcal{J}$ and $A=T_{\varphi_1}T_{\varphi_2}\cdots T_{\varphi_m}$. Then the Berezin transform $\widetilde{A}$ is in $\mathcal{J}.$
Moreover, if $\mathcal{J}_1$ is a translation invariant closed ideal of $\mathcal{J}$, $\psi_1, \psi_2, \cdots, \psi_k\in \mathcal{J}_1$ and  $B=T_{\psi_1}T_{\psi_2}\cdots T_{\psi_k}$, then
$$\widetilde{AB}\in \mathcal{J}_1 \ \ \ \ \text{ and }\ \ \ \  \widetilde{BA}\in \mathcal{J}_1.$$
\end{prop}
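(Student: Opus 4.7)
The plan is to reduce the Berezin transform $\widetilde{A}$ to an absolutely convergent iterated integral over $\mathbb{C}^{mn}$ and then recognize this integral as a Bochner integral of $\mathcal{J}$-valued functions. The crucial first step is the covariance identity $W_{-z} T_\varphi W_z = T_{\alpha_z \varphi}$, valid for all $\varphi \in L^\infty(\mathbb{C}^n, dV)$ and $z \in \mathbb{C}^n$. This follows from a direct computation of $\langle W_{-z} T_\varphi W_z f, g\rangle_t$ via the identity $|k_z^t(w)|^2\, d\mu_t(w) = \frac{1}{(\pi t)^n} e^{-|w-z|^2/t}\, dV(w)$ and the substitution $u = w - z$. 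Combining this with $k_z^t = W_z 1$ and $\langle h, 1\rangle_t = h(0)$ for entire $h \in F_t^p$, and inserting $I = W_z W_{-z}$ between the successive Toeplitz factors of $W_{-z} A W_z$, one obtains
$$\widetilde{A}(z) = \langle W_{-z}AW_z 1, 1\rangle_t = \langle T_{\alpha_z \varphi_1} T_{\alpha_z \varphi_2} \cdots T_{\alpha_z \varphi_m} 1, 1\rangle_t.$$

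Next I would iteratively expand each Toeplitz factor via $T_\psi f(v) = \int \psi(w) f(w) \overline{K_v^t(w)}\, d\mu_t(w)$, evaluate the outermost pairing at $v = 0$, and apply Fubini to obtain the explicit multi-fold integral representation
$$\widetilde{A}(z) = \int_{\mathbb{C}^{mn}} \prod_{j=1}^{m} \varphi_j(v_j + z)\, K_m(v_1, \ldots, v_m)\, dV(v_1) \cdots dV(v_m),$$
where
$$K_m(v_1, \ldots, v_m) = \frac{1}{(\pi t)^{mn}} \exp\bigg[\sum_{i=1}^{m-1} \frac{v_i \cdot \overline{v_{i+1}}}{t} - \sum_{j=1}^{m} \frac{|v_j|^2}{t}\bigg].$$
Integrability of $|K_m|$ reduces to verifying that the real tridiagonal quadratic form with $2$ on the diagonal and $-1$ on the sub- and super-diagonals is positive definite, a classical fact (its eigenvalues are $2 - 2\cos(k\pi/(m+1))$, $k = 1, \ldots, m$); this simultaneously legitimizes the Fubini step.

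Reading the identity as a function of $z$, since $\varphi_j(v_j + z) = (\alpha_{v_j}\varphi_j)(z)$, the integrand is $\prod_j \alpha_{v_j}\varphi_j$, which lies in $\mathcal{J}$ by translation invariance and the subalgebra property. Uniform continuity of each $\varphi_j$ makes $(v_1, \ldots, v_m) \mapsto \prod_j \alpha_{v_j}\varphi_j$ continuous into $L^\infty(\mathbb{C}^n, dV)$, so $\widetilde{A}$ is a Bochner integral of a continuous $\mathcal{J}$-valued function against the $L^1$-kernel $K_m$. Approximating by Riemann sums exhibits $\widetilde{A}$ as a uniform limit of elements of $\mathcal{J}$, and closedness of $\mathcal{J}$ then yields $\widetilde{A} \in \mathcal{J}$.

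The ideal statement follows by the same derivation applied to $AB = T_{\varphi_1} \cdots T_{\varphi_m} T_{\psi_1} \cdots T_{\psi_k}$: the $(m+k)$-fold integral has $z$-integrand $\prod_i \alpha_{u_i}\varphi_i \cdot \prod_j \alpha_{v_j}\psi_j$; translation invariance of $\mathcal{J}_1$ keeps each $\alpha_{v_j}\psi_j$ inside $\mathcal{J}_1$, and the ideal property $\mathcal{J}_1 \cdot \mathcal{J} \subset \mathcal{J}_1$ forces the full product into $\mathcal{J}_1$, so the Bochner integral lands in the closed subspace $\mathcal{J}_1$ and $\widetilde{AB} \in \mathcal{J}_1$; the case $\widetilde{BA}$ is identical. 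I expect the main technical obstacle to be the bookkeeping in deriving the multi-fold integral formula and verifying the Gaussian decay of $K_m$; once the formula is in hand, the Bochner-integration-in-$\mathcal{J}$ argument is routine.
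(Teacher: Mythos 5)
Your proposal is correct, and it takes a genuinely different route from the paper. The paper proves $\widetilde{A}\in\mathcal{J}$ by induction on $m$: it inserts the resolution of the identity $I=\frac{1}{(\pi t)^n}\int(k_w^t\otimes k_w^t)\,dV(w)$ between $T_{\varphi_1}$ and $T_{\varphi_2}\cdots T_{\varphi_m}$, invokes the bilinear-map/polarization machinery (Proposition \ref{fockberezin} via Corollary \ref{fockbi}) to upgrade the inductive hypothesis $\langle T W_{(\cdot)}k_w^t,W_{(\cdot)}k_w^t\rangle_t\in\mathcal{J}$ to the off-diagonal statement $\langle T W_{(\cdot)}k_0^t,W_{(\cdot)}k_w^t\rangle_t\in\mathcal{J}$, and then controls the $w$-integral by a Gaussian decay estimate. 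You instead unwind the product completely into a single explicit $mn$-fold integral $\widetilde{A}(z)=\int\prod_j(\alpha_{v_j}\varphi_j)(z)\,K_m(v)\,dV(v)$ with a closed-form Gaussian kernel, verify $K_m\in L^1$ via positive definiteness of the tridiagonal form $I-\tfrac12 S$ (eigenvalues $1-\cos(k\pi/(m+1))>0$, which legitimizes Fubini), and conclude by reading the formula as a Bochner integral of a bounded continuous $\mathcal{J}$-valued map against a finite measure; the covariance $W_{-z}T_\varphi W_z=T_{\alpha_z\varphi}$ and the ideal/subalgebra properties of the integrand handle both halves of the statement at once. Your argument is more elementary and self-contained for this proposition (it bypasses the induction and Corollary \ref{fockbi} entirely, and yields an explicit formula), whereas the paper's inductive scheme is the one that transfers to the Bergman-space analogue (Proposition \ref{productberezin}), where the non-commutativity of $\mathrm{Aut}(\mathbb{B}_n)$ rules out any such closed-form multi-Gaussian kernel. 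Note that Proposition \ref{fockberezin} and Corollary \ref{fockbi} are still needed elsewhere (in Proposition \ref{fockweak}), so your shortcut localizes to this proposition rather than simplifying the proof of Theorem \ref{focktop} as a whole.
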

\begin{proof}
For a single Toeplitz operator $T_{\varphi_1}$, using $W_{-z} T_{\varphi_1} W_z= T_{\alpha_z \varphi_1}$ we have
$$
\langle T_{\varphi_1}k_z^t,k_z^t\rangle_t=\langle W_{-z}T_{\varphi_1}W_{z}1,1\rangle_t=\langle T_{\alpha_z\varphi_1}1,1\rangle_t=\int_{\mathbb{C}^n}\alpha_z\varphi_1(\xi)d\mu_t(\xi).
$$
Since $\varphi_1\in \mathcal{J}$ and $\mathcal{J}$ is translation invariant, we have that $\alpha_{(\cdot)}\varphi_1(\xi)\in \mathcal{J}$ for any $\xi\in \mathbb{C}^n$, and moreover,
$$\xi \mapsto  \alpha_{(\cdot)}\varphi_1(\xi)$$
is uniformly continuous and uniformly bounded from $\mathbb{C}^n$ to $\mathcal{J}$ with respect to the $L^{\infty}$-norm.
Thus we have
$$\big\langle T_{\varphi_1}k_{(\cdot)}^t,k_{(\cdot)}^t\big\rangle_t=\int_{\mathbb{C}^n}\alpha_{(\cdot)}\varphi_1(\xi)d\mu_t(\xi)\in \mathcal{J}.$$

To show that the Berezin transform of the product of $m$ Toeplitz operators belongs to $\mathcal{J}$, we suppose that the conclusion holds for $ m\leqslant k-1$. Then we are going to prove the conclusion  for the case of  $m=k$. Noting that $T_{\varphi_1}^*=T_{\overline{\varphi_1}}$, we have
by (\ref{IT})  that
\begin{align}\label{product}
\begin{split}
\langle T_{\varphi_1}T_{\varphi_2}\cdots T_{\varphi_k} k_z^t,k_z^t\rangle_t
&=\langle T_{\varphi_2}\cdots T_{\varphi_k} k_z^t,T_{\overline{\varphi_1}}k_z^t\rangle_t\\
&=\frac{1}{(\pi t)^n}\int_{\mathbb{C}^n} \langle T_{\varphi_2}\cdots T_{\varphi_k} k_z^t,k^t_{z+w}\rangle_t
\langle k^t_{z+w},T_{\overline{\varphi_1}}k_z^t\rangle_t dV(w)\\
&=\frac{1}{(\pi t)^n}\int_{\mathbb{C}^n} \langle T_{\varphi_2}\cdots T_{\varphi_k} W_zk_0^t,W_zk^t_{w}\rangle_t
\langle T_{\varphi_1}W_z k^t_{w},W_zk_0^t\rangle_t dV(w).
\end{split}
\end{align}

For $T_{\varphi_1}$ and the product $T_{\varphi_2}\cdots T_{\varphi_k}$,  we have by the induction hypothesis that
$$\big\langle T_{\varphi_1}W_{(\cdot)} k^t_{w},W_{(\cdot)}k_w^t\big\rangle_t \in \mathcal{J} \ \ \ \text{ and }\ \ \
 \big\langle T_{\varphi_2}\cdots T_{\varphi_k} W_{(\cdot)}k_w^t,W_{(\cdot)} k^t_{w}\big\rangle_t\in \mathcal{J} $$
for any $w\in \mathbb{C}^n$.  Since $\mathcal{J}$ is a translation invariant subalgebra, we obtain  by  Corollary \ref{fockbi} that
$$\big\langle T_{\varphi_2}\cdots T_{\varphi_k} W_{(\cdot)}k_0^t,W_{(\cdot)} k^t_{w}\big\rangle_t
\big\langle T_{\varphi_1}W_{(\cdot)} k^t_{w},W_{(\cdot)}k_0^t\big\rangle_t\in \mathcal{J}.$$
Using that $W_z^* T_\varphi W_z= T_{\alpha_z \varphi}$ on $F_t^p$ for $\varphi\in L^\infty(\mathbb C^n, dV)$, we have
\begin{align*}
&\big\|\big\langle T_{\varphi_2}\cdots T_{\varphi_k} W_{(\cdot)}k_0^t,W_{(\cdot)} k^t_{w}\big\rangle_t
\big\langle T_{\varphi_1}W_{(\cdot)} k^t_{w},W_{(\cdot)}k_0^t\big\rangle_t\big\|_{\infty}\\
&=\sup_{z\in \mathbb C^n} \big|\langle T_{\varphi_2}\cdots T_{\varphi_k} W_{z}k_0^t,W_{z} k^t_{w}\rangle_t
\langle T_{\varphi_1}W_{z} k^t_{w},W_{z}k_0^t\rangle_t\big|\\
&=\sup_{z\in \mathbb C^n} \big|\langle T_{\alpha_z \varphi_2}\cdots T_{\alpha_z \varphi_k} 1, k^t_{w}\rangle_t
\langle k^t_{w},T_{\overline{\alpha_z \varphi_1}}1 \rangle_t\big|\\
&\leqslant \sup_{z\in \mathbb C^n} \| T_{\alpha_z \varphi_2}\cdots T_{\alpha_z \varphi_k} 1\|_{F^2_t} \| k^t_{w}\|_{F^2_t}
|\langle k^t_{w},\overline{\alpha_z \varphi_1} \rangle_t|\\
&\leqslant\|\varphi_2\|_{\infty}\cdots\|\varphi_k\|_{\infty}  \sup_{z\in \mathbb C^n}\int_{\mathbb{C}^n}|\alpha_z\varphi_1(\xi)|\cdot\Big|e^{\frac{\xi\cdot \overline{w}}{t}}
e^{-\frac{|w|^2}{2t}}\Big| e^{-\frac{|\xi|^2}{t}}dV(\xi)\\
&\leqslant\|\varphi_1\|_{\infty} \|\varphi_2\|_{\infty}\cdots\|\varphi_k\|_{\infty}e^{-\frac{|w|^2}{4t}},
\end{align*}
where the last inequality comes from \cite[Corollary 2.5]{Zhu}.  By  Lemmas \ref{fockint} and \ref{fockdiff},  the mapping
$$w \mapsto \big\langle T_{\varphi_2}\cdots T_{\varphi_k} W_{(\cdot)}k_0^t,W_{(\cdot)}k^t_{w}\big\rangle_t
\big\langle T_{\varphi_1}W_{(\cdot)} k^t_{w},W_{(\cdot)}k_0^t\big\rangle_t$$
is uniformly continuous from each  compact subset of $\mathbb{C}^n$ to $\mathcal{J}$. Thus (\ref{product}) gives that
$$\widetilde{(T_{\varphi_1}T_{\varphi_2}\cdots T_{\varphi_k})}(\cdot)=\big\langle T_{\varphi_1}T_{\varphi_2}\cdots T_{\varphi_k}k_{(\cdot)}^t,k_{(\cdot)}^t\big\rangle_t\in \mathcal{J}.$$
This implies that $\widetilde{A}\in \mathcal{J}$ when $A=T_{\varphi_1}T_{\varphi_2}\cdots T_{\varphi_m}$ with $m\geqslant 1$, as desired.

By the definition of the Berezin transform of $AB$, we have
\begin{align*}
&\langle AB k_z^t,k_z^t\rangle_t=\frac{1}{(\pi t)^n}\int_{\mathbb{C}^n} \langle B W_zk_0^t,W_zk^t_{w}\rangle_t
\langle AW_z k^t_{w},W_zk_0^t\rangle_t dV(w).
\end{align*}
From the arguments above, it follows that
$$\big\langle A W_{(\cdot)}k_w^t,W_{(\cdot)}k^t_{0}\big\rangle_t\in\mathcal{J} \ \ \  \text{ and }\ \ \
\big\langle B W_{(\cdot)}k_0^t,W_{(\cdot)}k^t_{w}\big\rangle_t\in\mathcal{J}_1.$$
Since $\mathcal{J}_1$ is an ideal, we get that
$$\big\langle B W_{(\cdot)}k_0^t,W_{(\cdot)}k^t_{w}\big\rangle_t\big\langle A W_{(\cdot)}k_w^t,W_{(\cdot)}k^t_{0}\big\rangle_t \in\mathcal{J}_1,$$
to obtain  $\widetilde{AB}\in \mathcal{J}_1$.  Similarly, we can show that  $\widetilde{BA}\in \mathcal{J}_1$.
This completes the proof of Proposition \ref{fockproduct}.
\end{proof}

Now we are ready to present the proof of Theorem \ref{focktop}.
\begin{proof}[Proof of Theorem \ref{focktop}]
Let $\mathcal{J}$ be a translation invariant closed subalgebra of $\mathrm{BUC}(\mathbb{C}^n)$. Let us first show that
$$\mathcal{T}(\mathcal{J})=\mathcal{T}_{lin}(\mathcal{J}).$$
For $\varphi_1, \cdots, \varphi_m\in \mathcal{J}$, let $A=T_{\varphi_1}T_{\varphi_2}\cdots T_{\varphi_m}$. Proposition \ref{fockproduct} implies that  $\widetilde{A}\in \mathcal{J}.$ Using Proposition \ref{fockweak} and the fact that $A$ is  weakly localized, we have
$$A\in \mathcal{T}_{lin}(\mathcal{J}).$$
Observing that  $\mathcal{T}_{lin}(\mathcal{J})\subset\mathcal{T}(\mathcal{J})$ is obvious, so we have $\mathcal{T}(\mathcal{J})=\mathcal{T}_{lin}(\mathcal{J}).$

To complete the proof of Theorem \ref{focktop},  it remains to show that $\mathcal{T}(\mathcal{J}_1)$ is a two-sided ideal in $\mathcal{T}(\mathcal{J})$ if $\mathcal{J}_1$ is a translation invariant closed ideal of $\mathcal{J}$. To do so, we let $B=T_{\psi_1}T_{\psi_2}\cdots T_{\psi_k}$ with $\psi_1, \psi_2, \cdots,\psi_k\in \mathcal{J}_1$. Then we have by Proposition \ref{fockproduct} that
$$\widetilde{AB}\in \mathcal{J}_1 \ \ \ \text{ and }\ \ \ \widetilde{BA} \in \mathcal{J}_1.$$
Since $AB$ and $BA$ both are weakly localized operators, we deduce by  Proposition  \ref{fockweak} that
$$AB\in \mathcal{T}_{lin}(\mathcal{J}_1)=\mathcal{T}(\mathcal{J}_1)  \ \ \ \text{ and } \ \ \  BA\in \mathcal{T}_{lin}(\mathcal{J}_1)=\mathcal{T}(\mathcal{J}_1).$$
This completes the proof of Theorem \ref{focktop}.
\end{proof}

\section{Integral representations on the $p$-Bergman space}\label{Bergman-I-R}
The main purpose of this section is to establish an integral representation for $s$-weakly localized operators on the $p$-Bergman space $L_a^p$ with $1<p<\infty$. First, let us review some basic knowledge about the reproducing kernel for the Bergman space $L_a^2$, the M\"{o}bius transform and the Bergman metric on the unit ball $\mathbb B_n$.

 Recall that the reproducing kernel for the Bergman space $L_a^2$ is given by
$$K_{z}(w)=\frac{1}{(1-w\cdot \overline{z})^{n+1}},  \ \ \ z, w\in \mathbb B_n.$$
A simple  calculation shows that  $$c'_{p, q} (1-|z|^2)^{-\frac{n+1}{q}} \leqslant \|K_z\|_p \leqslant c_{p, q} (1-|z|^2)^{-\frac{n+1}{q}}$$
for some positive  constants $c_{p, q}$ and $c_{p, q}'$ depending only $p$ and $q$, where $\frac{1}{p}+\frac{1}{q}=1$. Letting
\begin{align}\label{k}
k_z^{(p)}:=(1-|z|^2)^{\frac{n+1}{q}}K_z,
\end{align}
then we have that $$c^{-1}_p\leqslant \|k^{(p)}_{z}\|_p\leqslant c_p$$
for some constant $c_p>0$ depending only on $p$. Recall that the Berezin transform $\widetilde{T}$ of a bounded linear operator $T$ on $L_a^p$ is defined by
$$\widetilde{T}(z)=\langle Tk_z,k_z\rangle, \ \ \ z\in \mathbb B_n,$$
where $k_z=k_z^{(2)}$ is the normalized reproducing kernel for $L^2_a$.

Let $\varphi_z$ be the M\"{o}bius transform of $\mathbb{B}_n$ that interchanges $0$ and $z$. Then we have
$$1- |\varphi_a(z)|^2=\frac{(1-|a|^2)(1-|z|^2)}{|1-\overline{a}\cdot z|^2},$$
see \cite[pages 25-26]{rudin} for the details. The \emph{Bergman metric} on $\mathbb{B}_n$ is defined by
$$\beta(z,w)=\frac{1}{2}\log\frac{1+|\varphi_z(w)|}{1-|\varphi_z(w)|}, \ \ \ z, w\in\mathbb B_n,$$
which is  M\"{o}bius invariant.  For each $z\in \mathbb B_n$ and $0<r<\infty$, the corresponding $\beta$-ball is given by
$$D(z,r)=\big\{w\in \mathbb B_n: \beta(z,w)< r \big\},$$
see pages 22-28 in \cite{Zhu2}.  Recall that the formula
$$d\lambda(z)=\frac{dv(z)}{(1-|z|^2)^{n+1}}$$
gives us  the standard M\"{o}bius-invariant measure on the unit ball. It is well-known that on $L_a^p$ we have
$$I=\int_{\mathbb B_n} (k_z\otimes k_z)d\lambda(z).$$

Recall that the pseudo-hyperbolic metric on $\mathbb B_n$ is defined by  $$ \rho(u,v) = |\varphi_{u}(v)|, \ \ \ \ u, v\in \mathbb B_n.$$
For the pseudo-hyperbolic metric, we have
\begin{align} \label{G}
\rho(\varphi_{u}(z),\varphi_{v}(z))\leqslant  \frac{G}{(1-|z|)^2 }\rho(u,v)
\end{align}
 holds for some  positive  constant $G$, which was obtained in \cite[Lemma 6.2]{suarez}.

The relationship between the Bergman metric and the  pseudo-hyperbolic metric on $\mathbb B_n$ is given by the following:
\begin{equation}\label{pseudohyper}
	\beta(u,v) =\frac{1}{2}\log\frac{1+|\rho(u,v)|}{1-|\rho(u,v)|} = \tanh^{-1}(\rho(u,v))
\end{equation}
see \cite[Corollary 1.22]{Zhu2} if required.

For a subset $\mathcal{I}$ of  $\mathrm{BUC}(\mathbb{B}_n)$, recall that  $\mathcal{I}$ is translation invariant if $\tau_zf\in \mathcal{I}$ for all $f\in \mathcal{I}$ and $z\in \mathbb{B}_n$, where $\tau_z$ is defined by (\ref{tau}):
$$(\tau_zf)(u)=f(\varphi_{u}(z)), \ \ \ u\in \mathbb B_n.$$ We will show in the next lemma that  $\mathrm{BUC}(\mathbb{B}_n)$ is translation invariant.

\begin{lem}\label{distantlemma}
Let  $z$, $u$ and $v$ be in $\mathbb{B}_n $.  If $\beta(u,v) < \tanh^{-1}\big[\frac{(1-|z|)^2}{G}\big]$, then we have
	$$\beta(\varphi_{u}(z),\varphi_{v}(z))\leqslant \tanh^{-1}\Big[\frac{G}{(1-|z|)^2 }\tanh\big(\beta(u,v)\big)\Big],$$
where $G$ is constant in (\ref{G}). As a consequence, $\mathrm{BUC}(\mathbb{B}_n)$ is translation invariant.
\end{lem}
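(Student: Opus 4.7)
The plan is to derive the displayed inequality directly from the pseudo-hyperbolic Lipschitz bound (\ref{G}) together with the identity (\ref{pseudohyper}) converting between the pseudo-hyperbolic and Bergman metrics, and then to bootstrap this inequality to translation invariance of $\mathrm{BUC}(\mathbb{B}_n)$.

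For the first assertion, I would start from the hypothesis $\beta(u,v)<\tanh^{-1}\bigl[(1-|z|)^{2}/G\bigr]$. Because $\tanh$ is strictly increasing on $[0,\infty)$, applying it together with (\ref{pseudohyper}) yields $\rho(u,v)<(1-|z|)^{2}/G$. Substituting this into (\ref{G}) gives $\rho(\varphi_{u}(z),\varphi_{v}(z))\leqslant \frac{G}{(1-|z|)^{2}}\rho(u,v)<1$, so the inverse hyperbolic tangent of the right-hand side is well-defined. Applying the monotone function $\tanh^{-1}$ and then using (\ref{pseudohyper}) once more on the left-hand side produces the claimed bound $\beta(\varphi_{u}(z),\varphi_{v}(z))\leqslant\tanh^{-1}\bigl[\frac{G}{(1-|z|)^{2}}\tanh(\beta(u,v))\bigr]$.

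For the consequence on $\mathrm{BUC}(\mathbb{B}_n)$, boundedness of $\tau_{z}f$ is immediate, since $|(\tau_{z}f)(u)|=|f(\varphi_{u}(z))|\leqslant\|f\|_{\infty}$. For uniform continuity, I would fix $f\in\mathrm{BUC}(\mathbb{B}_n)$, $z\in\mathbb{B}_n$ and $\varepsilon>0$, and select $\eta>0$ so that $\beta(x,y)<\eta$ implies $|f(x)-f(y)|<\varepsilon$. Then I would pick $\delta\in\bigl(0,\tanh^{-1}[(1-|z|)^{2}/G]\bigr)$ small enough that $\frac{G}{(1-|z|)^{2}}\tanh(\delta)<\tanh(\eta)$; this is possible because $\tanh(\delta)\to 0$ as $\delta\to 0^{+}$ and $|z|$ is fixed. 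The inequality just proved then forces $\beta(\varphi_{u}(z),\varphi_{v}(z))<\eta$ whenever $\beta(u,v)<\delta$, so $|(\tau_{z}f)(u)-(\tau_{z}f)(v)|<\varepsilon$ uniformly in $u,v\in\mathbb{B}_n$.

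There is no real obstacle here; the lemma is essentially a careful unpacking of (\ref{pseudohyper}) and the pseudo-hyperbolic estimate (\ref{G}) from \cite{suarez}. The only point demanding minor care is ensuring that every argument of $\tanh^{-1}$ stays in $[0,1)$, which is precisely what the smallness hypothesis $\beta(u,v)<\tanh^{-1}[(1-|z|)^{2}/G]$ guarantees, and this in turn is what makes the choice of $\delta$ in the translation-invariance argument legitimate.
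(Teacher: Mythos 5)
Your proof is correct and follows essentially the same route as the paper's: use the smallness hypothesis to keep the argument of $\tanh^{-1}$ in $[0,1)$, apply the pseudo-hyperbolic Lipschitz bound (\ref{G}), and convert back via (\ref{pseudohyper}) using monotonicity of $\tanh^{-1}$. Your explicit unpacking of the $\mathrm{BUC}(\mathbb{B}_n)$ consequence is a welcome elaboration of a step the paper leaves implicit, but it is not a different argument.
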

\begin{proof}
Note that the assumption $\beta(u,v) < \tanh^{-1}\big[\frac{(1-|z|)^2}{G}\big]$ implies that $\frac{G}{(1-|z|)^2 }\rho(u,v)<1$. Thus $$\tanh^{-1}\Big[ \frac{G}{(1-|z|)^2 }\rho(u,v)\Big]$$ is well-defined.
It follows that
\begin{align*}
		\beta(\varphi_{u}(z),\varphi_{v}(z)) &= \tanh^{-1}(\rho(\varphi_{u}(z),\varphi_{v}(z))\leqslant \tanh^{-1}\Big[ \frac{G}{(1-|z|)^2 }\rho(u,v)\Big]\\
&=\tanh^{-1}\Big[\frac{G}{(1-|z|)^2 }\tanh\big(\beta(u,v)\big)\Big]
\end{align*}
since the funciton $\tanh^{-1}(x)$ is monotone increasing for $x\in (-1, 1)$. This proves the lemma.
\end{proof}

Let $z\in \mathbb B_n$, the operator $U_z$ is defined by
\begin{align}\label{Uf}
U_{z}f(w)=f(\varphi_{z}(w))k_z(w),  \ \ \ w\in \mathbb B_n, \  f\in L_a^p.
\end{align}
In particular, $U_{u}k_z=\eta(u,z)k_{\varphi_u(z)},$
where $u, z\in \mathbb B_n$  and  $\eta(u,z)=\frac{|1- u\cdot\overline{z}|^{n+1}}{(1- u\cdot \overline{z})^{n+1}}.$
Moreover, one can check readily that $U_z^*=U_z$ and
\begin{align}\label{U}
U_z^*T_\varphi U_z=T_{\varphi \circ \varphi_z}
\end{align}
 on $L_a^p$ for $\varphi \in L^\infty(\mathbb B_n, dv)$ and $z\in \mathbb B_n$.

The following definition of the \emph{$s$-weakly localized operator} was first introduced in \cite[Definition 1.4]{wick2014}, which plays an important role in the
characterization of Toeplitz algebras over the Bergman space $L_a^2$ \cite{Xia2015,Xia2017, Xia2018}.
\begin{defi}\label{s-weakly}
For any real number $s$ such that $0<s<\min\{p,q\}$, we say that a bounded linear operator $T$ on $L_a^p$ is $s$-weakly localized if it satisfies
$$\sup_{z\in \mathbb{B}_n}\int_{\mathbb{B}_n}|\langle Tk_z,k_w\rangle|\frac{\|K_z\|_2^{1-\frac{2s}{q(n+1)}}}{\|K_w\|_2^{1-\frac{2s}{q(n+1)}}}d\lambda(w)<\infty,$$
$$\sup_{z\in \mathbb{B}_n}\int_{\mathbb{B}_n}|\langle T^*k_z,k_w\rangle|\frac{\|K_z\|_2^{1-\frac{2s}{p(n+1)}}}{\|K_w\|_2^{1-\frac{2s}{p(n+1)}}}d\lambda(w)<\infty,$$
$$\lim_{r\rightarrow \infty}\sup_{z\in \mathbb{B}_n}\int_{\mathbb{B}_n\setminus D(z,r)}|\langle Tk_z,k_w\rangle|\frac{\|K_z\|_2^{1-\frac{2s}{q(n+1)}}}{\|K_w\|_2^{1-\frac{2s}{q(n+1)}}}d\lambda(w)=0,$$
$$\lim_{r\rightarrow \infty}\sup_{z\in \mathbb{B}_n}\int_{\mathbb{B}_n\setminus D(z,r)}|\langle T^*k_z,k_w\rangle|\frac{\|K_z\|_2^{1-\frac{2s}{p(n+1)}}}{\|K_w\|_2^{1-\frac{2s}{p(n+1)}}}d\lambda(w)=0.$$
The collection  of s-weakly localized operators on $L_a^p$ is denoted by $\mathcal{A}^{p}_s$.
\end{defi}

In the rest of this paper, we fix an $s$ such that $0<s<\min\{p,q\}$.  For an  $s$-weakly localized operator $T$ acting on $L_a^p$, we define
$$E_r(T,s)=\sup_{z\in \mathbb B_n}\int_{\mathbb B_n\setminus D(z,r)}|\langle Tk_z,k_w\rangle|\frac{\|K_z\|_2^{1-\frac{2s}{q(n+1)}}}{\|K_w\|_2^{1-\frac{2s}{q(n+1)}}}d\lambda(w)$$
and
$$E'_r(T,s)=\sup_{z\in \mathbb B_n}\int_{\mathbb B_n\setminus D(z,r)}|\langle T^*k_z,k_w\rangle|\frac{\|K_z\|_2^{1-\frac{2s}{p(n+1)}}}{\|K_w\|_2^{1-\frac{2s}{p(n+1)}}}d\lambda(w).$$
Define $D(z, 0)=\varnothing$ for $z\in \mathbb B_n$. Then we have that $E_0(T,s)<\infty$, $E'_0(T,s)<\infty$ and
\begin{equation*}\label{weakestimation}
\lim_{r\rightarrow\infty}E_r(T,s)=\lim_{r\rightarrow\infty}E'_r(T,s)=0.
\end{equation*}

The main theorem of this section is the following integral representation for $s$-weakly localized operators on the $p$-Bergman space $L_a^p$, which is parallel to Theorem \ref{fockintrep} in the previous section.
\begin{thm}\label{integralrepresantation}
Let $T$ be a bounded linear operator  on $L_a^p$ and  $r>0$. Then the mapping
$$v \mapsto  \int_{\mathbb B_n} \langle TU_uk_0,U_u k_{v}\rangle (U_u k_{v})\otimes (U_uk_0)d\lambda(u)$$
is uniformly continuous (with respect to the Bergman metric) and uniformly  bounded on $D(0, r)$. Moreover,  the integral
$$\int_{D(0,r)}\int_{\mathbb B_n} \langle TU_uk_0,U_u k_{v}\rangle (U_u k_{v})\otimes (U_uk_0) d\lambda(u) d\lambda(v)$$
is convergent in the norm topology. Furthermore, if $T\in \mathcal{A}^{p}_s$, then
$$\int_{D(0,r)}\int_{\mathbb B_n} \langle TU_uk_0,U_u k_{v}\rangle (U_u k_{v})\otimes (U_uk_0) d\lambda(u) d\lambda(v)$$
converges to $T$ in norm as $r\rightarrow \infty$.
\end{thm}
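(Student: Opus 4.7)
The plan is to carry out the Bergman-space analog of the proof of Theorem \ref{fockintrep}, with the involutive operators $U_u$ from (\ref{Uf}) playing the role of the Weyl operators $W_z$ and the M\"obius-invariant measure $d\lambda$ replacing $dV$. First I would establish two preparatory lemmas paralleling Lemmas \ref{fockint} and \ref{fockdiff}: (i) Schur-type bilinear bounds of the form
$$\sup_{u\in\mathbb{B}_n}|\langle TU_uh_1, U_uh_2\rangle| \lesssim \|T\|\,\|h_1\|\,\|h_2\|$$
and
$$\left|\int_{\mathbb{B}_n} h_0(u)\langle f, U_uh_1\rangle\langle U_uh_2, g\rangle d\lambda(u)\right| \lesssim \|h_0\|_\infty \|h_1\| \|h_2\| \|f\|_p \|g\|_q,$$
proved by inserting the resolution $I = \int_{\mathbb{B}_n} (k_z \otimes k_z) d\lambda(z)$ twice, applying H\"older's inequality, and using the M\"obius-invariance of $d\lambda$ together with $U_u k_z = \eta(u,z) k_{\varphi_u(z)}$ and $|\eta|=1$ to reduce the resulting iterated integral to a finite Schur constant; and (ii) a Lipschitz-type estimate of the form $\|k_v - k_{v'}\| \leq C_r \beta(v,v')$ for $v,v'\in D(0,r)$, obtained along the lines of Lemma \ref{fockdiff} by differentiating the power series of $K_v$ and invoking standard weighted-integral estimates on $\mathbb{B}_n$.

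With these two lemmas in hand, the uniform continuity and uniform boundedness of
$$v \mapsto \int_{\mathbb{B}_n} \langle TU_uk_0, U_uk_v\rangle (U_uk_v)\otimes (U_uk_0)\, d\lambda(u)$$
on $D(0,r)$ would follow by writing the difference at $v$ and $v'$ as a sum of two bilinear forms of the type controlled by (i), each picking up a factor $\|k_v-k_{v'}\|$ from (ii). This continuity, together with $\lambda(D(0,r))<\infty$, automatically yields norm convergence of the iterated integral over $D(0,r)\times\mathbb{B}_n$.

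The core step is the convergence to $T$ when $T\in \mathcal{A}^p_s$. Using the resolution of identity twice in the expression for $\langle Tf,g\rangle$ and changing variables $w = \varphi_u(v)$ in the inner integral (valid by M\"obius-invariance of $d\lambda$), and using $|\eta(u,v)|=1$, one verifies that the proposed truncated iterated integral equals
$$\int_{\mathbb{B}_n}\int_{D(u,r)} \langle Tk_u, k_w\rangle (k_w\otimes k_u)\, d\lambda(w)\, d\lambda(u)$$
evaluated on $(f,g)$, since $v \in D(0,r)$ is equivalent to $w \in D(u,r)$ by M\"obius-invariance of $\beta$. The approximation error is therefore the tail integral over $w \in \mathbb{B}_n \setminus D(u,r)$. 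The main obstacle will be bounding this tail by
$$[E_r(T,s)]^{1/p}\,[E'_0(T,s)]^{1/q}\,\|f\|_p\,\|g\|_q$$
through a weighted Schur split that distributes the factor $\|K_u\|_2^{1-2s/(q(n+1))}/\|K_w\|_2^{1-2s/(q(n+1))}$ from the definition of $\mathcal{A}^p_s$ between the $L^p_a$ and $L^q_a$ pieces, with the dual bound on $T^*$ (exponent $1-2s/(p(n+1))$) handling the complementary direction. Since $E_r(T,s) \to 0$ as $r \to \infty$ by the $s$-weakly localized assumption while $E'_0(T,s) < \infty$, the truncated integrals converge to $T$ in norm.
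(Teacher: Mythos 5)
Your proposal is correct and follows essentially the same route as the paper: the two preparatory lemmas you describe are precisely the paper's Lemma \ref{bounded} (the Schur-type bilinear bound, proved via the resolution of the identity, H\"older, and the change-of-variables/Rudin--Forelli estimates) and Lemma \ref{difference} (the Lipschitz bound $\|k_v-k_{v'}\|_\infty\leqslant C_r\beta(v,v')$), and the core step — rewriting the truncated integral via $w=\varphi_u(v)$ so that the error is a tail over $\mathbb B_n\setminus D(u,r)$, then splitting the weight $\|K_u\|_2^{b}/\|K_w\|_2^{b}$ with $b=\frac{2(n+1)-2s}{pq(n+1)}$ between the $L^p$ and $L^q$ factors to obtain the bound $(E_r(T,s))^{1/p}(E_0'(T,s))^{1/q}\|f\|_p\|g\|_q$ — is exactly the paper's argument.
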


The following estimation related to the Bergman metric is useful for us to obtain the integral representation for $s$-weakly localized operators on $L_a^p$, see \cite[Lemma 2.20]{Zhu2} and \cite[Lemma 2.27]{Zhu2} if needed.
\begin{lem}
\label{discrete}
For any $R>0$ and any  $b\in \mathbb R$,  there exists a constant $C>0$ such that
$$\bigg|\frac{(1- z\cdot\overline{ u})^b}{(1- z\cdot \overline{v})^b}-1\bigg|\leqslant  C\beta(u,v),  \ \ \
\bigg|\frac{(1-|u|^2)^b}{(1-|v|^2)^b}-1\bigg|\leqslant C\beta(u,v),$$
$$C^{-1}(1-|u|^2)\leqslant 1-|v|^2\leqslant C(1-|u|^2)$$
and $$C^{-1}|1- z\cdot \overline{v}| \leqslant |1- z\cdot \overline{u}|\leqslant  C|1- z\cdot \overline{v}|$$
for all $z,u,v\in\mathbb{B}_n$ with $\beta(u,v)\leqslant R$.
\end{lem}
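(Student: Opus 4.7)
The strategy is to reduce every estimate to the change of variables $a = \varphi_u(v)$, so that $v = \varphi_u(a)$ and, by \eqref{pseudohyper},
$$|a| = \rho(u,v) = \tanh \beta(u,v) \leqslant s_R := \tanh R < 1.$$

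I first handle the fourth inequality. Using the identity $1 - u\cdot\overline{\varphi_u(a)} = (1-|u|^2)/(1-u\cdot\overline{a})$ together with the M\"obius transformation rule
$$1 - \varphi_u(x)\cdot\overline{\varphi_u(y)} = \frac{(1-|u|^2)(1-x\cdot\overline{y})}{(1-x\cdot\overline{u})(1-u\cdot\overline{y})}$$
(taking $x = \varphi_u(z)$ and $y = a$), a short manipulation yields
$$\frac{1-z\cdot\overline{u}}{1-z\cdot\overline{v}} = \frac{1-u\cdot\overline{a}}{1-\varphi_u(z)\cdot\overline{a}}.$$
Since $|a| \leqslant s_R$ and $|u|, |\varphi_u(z)| \leqslant 1$, both the numerator and denominator on the right lie in the closed disc $\overline{D(1, s_R)} \subset \mathbb{C}$, so their moduli belong to $[1-s_R, 1+s_R]$. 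This immediately yields the two-sided comparison $C^{-1}|1-z\cdot\overline{v}| \leqslant |1-z\cdot\overline{u}| \leqslant C|1-z\cdot\overline{v}|$ with $C = (1+s_R)/(1-s_R)$.

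Writing $w$ for the ratio above, the same representation gives
$$|w - 1| = \frac{|(\varphi_u(z) - u)\cdot\overline{a}|}{|1-\varphi_u(z)\cdot\overline{a}|} \leqslant \frac{2|a|}{1-s_R} \leqslant \frac{2\beta(u,v)}{1-s_R},$$
using $\tanh x \leqslant x$ in the last step. Because $1-z\cdot\overline{u}$ and $1-z\cdot\overline{v}$ both lie in the open right half-plane, the principal-branch quotient $(1-z\cdot\overline{u})^b/(1-z\cdot\overline{v})^b$ coincides with $w^b$. The first inequality then follows from a two-regime Lipschitz argument: when $|w-1|\leqslant 1/2$ the map $w \mapsto w^b$ is Lipschitz on $\overline{D(1,1/2)}$, giving $|w^b-1| \leqslant C_b|w-1| \leqslant C\beta(u,v)$; when $|w-1|>1/2$ one has $\beta(u,v) \gtrsim (1-s_R)/4$ bounded below while $|w|^b$ is bounded by a constant depending only on $R$ and $b$, so the linear bound $|w^b - 1|\leqslant C\beta(u,v)$ persists after enlarging the constant.

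For the remaining two inequalities I use the M\"obius identity $1 - |\varphi_u(a)|^2 = (1-|u|^2)(1-|a|^2)/|1-a\cdot\overline{u}|^2$ to obtain
$$\frac{1-|v|^2}{1-|u|^2} = \frac{1-|a|^2}{|1-a\cdot\overline{u}|^2}.$$
Expanding $|1-a\cdot\overline{u}|^2 = 1 - 2\mathrm{Re}(a\cdot\overline{u}) + |a\cdot\overline{u}|^2$ and using $|a| \leqslant s_R$ show this ratio is bounded between constants depending only on $s_R$ and differs from $1$ by at most $C|a| \leqslant C\beta(u,v)$. The third inequality drops out immediately, and the second follows by the same two-regime Lipschitz argument applied to the real power. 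The main technical nuisance lies in the first inequality: for large $R$ the ratio $w$ need not be close to $1$ (only bounded away from $0$ and $\infty$), so the branch of $w \mapsto w^b$ has to be handled carefully; the split into near-$1$ and far-from-$1$ regimes is precisely what rescues a uniform linear estimate in $\beta(u,v)$.
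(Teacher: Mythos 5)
Your proof is correct. Note that the paper does not actually prove this lemma --- it simply cites Lemmas 2.20 and 2.27 of Zhu's \emph{Spaces of Holomorphic Functions in the Unit Ball} --- so you have supplied a self-contained argument where the authors supply none. Your route (the substitution $a=\varphi_u(v)$, the M\"obius identity $1-\varphi_u(x)\cdot\overline{\varphi_u(y)}=(1-|u|^2)(1-x\cdot\overline{y})/[(1-x\cdot\overline{u})(1-u\cdot\overline{y})]$ reducing both ratios to quantities of the form $(1-u\cdot\overline{a})/(1-\varphi_u(z)\cdot\overline{a})$ and $(1-|a|^2)/|1-a\cdot\overline{u}|^2$ with $|a|=\tanh\beta(u,v)\leqslant s_R<1$, and then $\tanh x\leqslant x$) is essentially the standard derivation one finds in Zhu's book, and every step checks out: the identity $\frac{1-z\cdot\overline{u}}{1-z\cdot\overline{v}}=\frac{1-u\cdot\overline{a}}{1-\varphi_u(z)\cdot\overline{a}}$ is verified by taking $y=a$ and $y=0$ in the M\"obius identity, and the branch issue for $w\mapsto w^b$ is handled correctly since $\arg(1-z\cdot\overline{u})$ and $\arg(1-z\cdot\overline{v})$ both lie in $(-\pi/2,\pi/2)$, so the principal logarithm is additive on the quotient. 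Two minor remarks: the two-regime split is not strictly necessary, because $w$ ranges over a compact subset of $\mathbb{C}\setminus(-\infty,0]$ determined only by $s_R$ (its argument is confined to $(-2\arcsin s_R,\,2\arcsin s_R)$), on which the principal power is already Lipschitz; and for the real-power inequality the single Lipschitz bound on the compact interval $\big[(1-s_R^2)/(1+s_R)^2,\,1/(1-s_R)^2\big]$ suffices with no case distinction. Neither affects correctness.
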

The next definition of \emph{separated set} in  the unit ball is quoted from \cite[Definition 2.1]{Xia2015}.
\begin{defi}\label{separated}
Let $\Gamma$ be a nonempty subset of $\mathbb{B}_n$. We say that $\Gamma$ is $\delta$-separated (or separated), if there exists a $\delta>0$ depending only  on $\Gamma$ such that
$ \beta(u,v)\geqslant \delta$ for all  $u\neq v$ in $\Gamma $.
\end{defi}

Then the proof of Theorem \ref{integralrepresantation}  begins with the following lemma.
\begin{lem}\label{bounded}
Let $h$ be in $L^\infty(\mathbb B_n, dv)$ and  $\Gamma$ be a $\delta$-separated  set. Let $\{h_{1,u}\}$ and $\{h_{2, u}\}$ be two families  of bounded analytic functions on $\mathbb B_n$. Then for each  $f\in L^p_a$ and $g\in L^q_a$, we have
$$\sum_{u\in\Gamma}\Big|h(u)\langle f, U_{u}h_{1,u} \rangle\langle U_{u}h_{2,u},g\rangle\Big|\leqslant
C \|h\|_{\infty} \sup_{u\in \mathbb{B}_n}\|h_{1,u}\|_{\infty}\sup_{u\in \mathbb{B}_n}\|h_{2, u}\|_{\infty}\|f\|_p \|g\|_q$$
and
$$\int_{\mathbb{B}_n}\Big|h(u)\langle f, U_{u}h_{1,u}\rangle\langle U_{u}h_{2,u},g\rangle\Big| d\lambda(u)\leqslant
 C' \|h\|_{\infty} \sup_{u\in \mathbb{B}_n}\|h_{1,u}\|_{\infty}\sup_{u\in \mathbb{B}_n}\|h_{2,u}\|_{\infty}\|f\|_p \|g\|_q,$$
where the constant $C$  depends only  on $\delta$ and $C'$ is an absolute constant.  Moreover,
for any bounded linear operator $T$ on $L^p_a$, we have
$$\sup_{u\in \mathbb{B}_n}|\langle TU_{u}h_{1,u},U_{u} h_{2,u}\rangle|\leqslant C'' \|T\| \sup_{u\in \mathbb{B}_n}\|h_{1,u}\|_{\infty} \sup_{u\in \mathbb{B}_n}\|h_{2,u}\|_{\infty},$$
where $C''$ is an absolute constant.
\end{lem}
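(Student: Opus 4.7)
The plan is to reduce all three estimates to the pointwise inequality
\[
|(U_u h)(w)| = |h(\varphi_u(w))|\,|k_u(w)| \leq \|h\|_\infty\,|k_u(w)|,
\]
combined with the identity $d\lambda(u) = (1-|u|^2)^{-(n+1)}\,dv(u)$ and the $L^p(dv)$-boundedness of the absolute Bergman projection
\[
(P^+ g)(u) := \int_{\mathbb B_n} \frac{|g(w)|}{|1-w\cdot\bar u|^{n+1}}\,dv(w), \qquad 1<p<\infty.
\]
The latter I take as standard: it follows from Schur's test with the test function $\phi(w) = (1-|w|^2)^{-s}$ for any $0 < s < 1/\max\{p,q\}$, reduced to standard Forelli--Rudin type integral estimates.

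The third inequality is immediate: by H\"older duality for the $L^p_a$--$L^q_a$ pairing,
\[
|\langle T U_u h_{1,u}, U_u h_{2,u}\rangle| \leq \|T\|\,\|U_u h_{1,u}\|_p\,\|U_u h_{2,u}\|_q \leq \|T\|\,\|h_{1,u}\|_\infty\,\|h_{2,u}\|_\infty\,\|k_u\|_p\,\|k_u\|_q,
\]
and a direct Forelli--Rudin computation yields $\|k_u\|_p \sim (1-|u|^2)^{(n+1)(1/p-1/2)}$, so the exponents in $\|k_u\|_p\,\|k_u\|_q$ sum to zero and we obtain a uniform (hence absolute) bound.

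For the integral inequality, applying the pointwise bound inside $\langle f, U_u h_{1,u}\rangle = \int f\,\overline{U_u h_{1,u}}\,dv$ and pulling out $\|h_{1,u}\|_\infty$ gives
\[
|\langle f, U_u h_{1,u}\rangle| \leq \|h_{1,u}\|_\infty\,(1-|u|^2)^{(n+1)/2}(P^+|f|)(u),
\]
and an entirely analogous bound holds for $|\langle U_u h_{2,u}, g\rangle|$ in terms of $P^+|g|$. Their product contributes the crucial factor $(1-|u|^2)^{n+1}$, which cancels \emph{exactly} against the density of $d\lambda$; the integral over $\mathbb B_n$ therefore collapses to
\[
\int_{\mathbb B_n} (P^+|f|)(P^+|g|)\,dv \leq \|P^+|f|\|_p\,\|P^+|g|\|_q \lesssim \|f\|_p\,\|g\|_q
\]
by H\"older and the $L^p$-boundedness of $P^+$. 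The resulting constant is absolute.

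The discrete inequality is where the parameter $\delta$ enters, and constitutes the only step that is not a one-line manipulation. After the same pointwise reduction, the task is to bound $\sum_{u\in\Gamma}(1-|u|^2)^{n+1}(P^+|f|)(u)(P^+|g|)(u)$. By Lemma~\ref{discrete}, for $u' \in D(u,\delta/2)$ one has $(1-|u'|^2)\sim(1-|u|^2)$ and $|1-w\cdot\bar{u'}|\sim|1-w\cdot\bar u|$ uniformly in $w$, so the whole summand at $u$ is comparable to its value at any $u'\in D(u,\delta/2)$ with constants depending only on $\delta$. Averaging over $D(u,\delta/2)$---whose $\lambda$-measure equals the universal constant $\lambda(D(0,\delta/2))$ by M\"obius invariance---and using that the balls $\{D(u,\delta/2)\}_{u\in\Gamma}$ are pairwise disjoint, the sum collapses to a single integral $\int_{\mathbb B_n}(P^+|f|)(P^+|g|)\,dv$, to which the integral case applies; the constant thus obtained depends only on $\delta$. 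The main (mild) obstacle is precisely this thickening step together with the $L^p$-boundedness of $P^+$; everything else is H\"older's inequality and bookkeeping of the weights $(1-|u|^2)$.
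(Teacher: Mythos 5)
Your proof is correct, and it takes a genuinely more streamlined route than the paper's. The paper expands $f$ and $g$ against reproducing kernels, forms the composite kernel $F(\xi,\zeta)=\sum_{u\in\Gamma}|\langle K_\xi,U_uh_{1,u}\rangle\langle U_uh_{2,u},K_\zeta\rangle|$, performs the $\delta$-thickening inside that kernel to replace the sum by $\int_{\mathbb B_n}|K_z(u)||K_u(w)|\,dv(u)$, and then runs a Schur test (with weights $\|K_\xi\|_2^b/\|K_\zeta\|_2^b$ and the Forelli--Rudin estimates) on the resulting four-variable integral. You instead bound $|U_uh_{i,u}|\leqslant\|h_{i,u}\|_\infty|k_u|$ pointwise, recognize $|\langle f,U_uh_{1,u}\rangle|\leqslant\|h_{1,u}\|_\infty(1-|u|^2)^{\frac{n+1}{2}}(P^+|f|)(u)$, and exploit the exact cancellation of $(1-|u|^2)^{n+1}$ against the density of $d\lambda$; the integral case then collapses to H\"older plus the $L^p(dv)$-boundedness of $P^+$ (which is indeed standard, by the same Schur test the paper uses, hidden in one citable lemma), and the discrete case uses the identical separated-set thickening trick as the paper --- comparability on $D(u,\delta/2)$ via Lemma \ref{discrete}, M\"obius invariance of $\lambda(D(u,\delta/2))$, and disjointness of the balls --- but applied to the scalar quantity $(P^+|f|)(P^+|g|)$ rather than to the kernel. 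Your treatment of the third inequality is the same as the paper's. What your approach buys is the avoidance of the four-fold integral and the explicit choice of the Schur exponent $b$; what the paper's buys is self-containedness, since it never needs to invoke the boundedness of $P^+$ as a black box. Both arguments ultimately rest on the same two ingredients: Forelli--Rudin/Schur estimates and the $\delta$-separation thickening.
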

\begin{proof}
Since $U_uh_{1,u}=(h_{1,u}\circ\varphi_u) k_u$ and $U_uh_{2,u}=(h_{2,u}\circ\varphi_u) k_u$ are bounded analytic functions,
$\langle f, U_{u}h_{1,u} \rangle $ and $\langle U_{u}h_{2,u},g\rangle$ are both well-defined. Then we have
\begin{align*}
&\sum_{u\in\Gamma}\Big|h(u)\langle f, U_{u}h_{1,u} \rangle\langle U_{u}h_{2,u},g\rangle\Big|\\
 &= \sum_{u\in\Gamma}\Big|h(u)\int_{\mathbb B_n} f(\xi) \langle K_\xi,U_{u}h_{1,u}\rangle dv(\xi) \int_{\mathbb B_n} \langle U_{u}h_{2,u},K_\zeta\rangle \overline{g(\zeta)}dv(\zeta)\Big| \\
& \leqslant  \|h\|_{\infty}\int_{\mathbb B_n}\int_{\mathbb B_n}|f(\xi)|\sum_{u\in\Gamma}|\langle K_\xi,U_{u}h_{1,u}\rangle \langle U_{u}h_{2,u},K_{\zeta}\rangle| dv(\xi)|g(\zeta)|dv(\zeta)
\end{align*}
for each $f\in L^p_a$ and $g\in L^q_a$. Denote
\begin{align}\label{F}
F(\xi,\zeta):=\sum_{u\in\Gamma}|\langle K_\xi,U_{u}h_{1,u}\rangle \langle U_{u}h_{2,u},K_\zeta\rangle|.
\end{align}
Then for any $b>0$, we have by  H\"{o}lder's inequality that
\begin{align*}
&\int_{\mathbb B_n}\int_{\mathbb B_n}|f(\xi)|F(\xi,\zeta) dv(\xi)|g(\zeta)|dv(\zeta)\\
&\leqslant \bigg[\int_{\mathbb B_n}\bigg(\int_{\mathbb B_n}|f(\xi)|F(\xi,\zeta) dv(\xi)\bigg)^pdv(\zeta)\bigg]^{\frac{1}{p}} \|g\|_q\\
&=\bigg[\int_{\mathbb B_n}\bigg(\int_{\mathbb B_n}|f(\xi)|F(\xi, \zeta)^{\frac{1}{p}}\frac{\|K_\zeta\|^b_2}{\|K_\xi\|^b_2} F(\xi,\zeta)^{\frac{1}{q}}\frac{\|K_\xi\|^b_2}{\|K_\zeta\|^b_2}dv(\xi)\bigg)^pdv(\zeta)\bigg]^{\frac{1}{p}} \|g\|_q.
\end{align*}
This  yields that
\begin{align*}
&\sum_{u\in\Gamma}\Big|h(u)\langle f, U_{u}h_{1,u} \rangle\langle U_{u}h_{2,u},g\rangle\Big|\\
&\leqslant \|h\|_\infty  \int_{\mathbb B_n}\int_{\mathbb B_n}|f(\xi)|F(\xi,\zeta) dv(\xi)|g(\zeta)|dv(\zeta)\\
& \leqslant \|h\|_\infty\|g\|_q\bigg[\int_{\mathbb B_n}\int_{\mathbb B_n}|f(\xi)|^pF(\xi,\zeta)\frac{\|K_\zeta\|^{bp}_2}{\|K_\xi\|^{bp}_2}dv(\xi)
\Big(\int_{\mathbb B_n} F(\xi,\zeta)\frac{\|K_\xi\|^{bq}_2}{\|K_\zeta\|^{bq}_2}dv(\xi)\Big)^\frac{p}{q}dv(\zeta)\bigg]^{\frac{1}{p}}\\
& \leqslant  \|h\|_\infty \|g\|_q\|f\|_p \bigg[\sup_{\zeta\in \mathbb B_n}\int_{\mathbb B_n} F(\xi,\zeta)\frac{\|K_\xi\|^{bq}_2}{\|K_\zeta\|^{bq}_2}dv(\xi)\bigg]^\frac{1}{q} \bigg[\sup_{\xi\in \mathbb B_n}\int_{\mathbb B_n} F(\xi,\zeta)\frac{\|K_\zeta\|^{bp}_2}{\|K_\xi\|^{bp}_2}dv(\zeta)
\bigg]^{\frac{1}{p}},
\end{align*}
where the second inequality is due to H\"{o}lder's inequality.

Next, we estimate the first term of the above inequality:
$$\sup_{\zeta\in \mathbb B_n}\int_{\mathbb B_n} F(\xi,\zeta)\frac{\|K_\xi\|^{bq}_2}{\|K_\zeta\|^{bq}_2}dv(\xi).$$
According to the definition of $U_u$ gives us that
\begin{align}\label{KU}
\begin{split}
&|\langle K_\xi, U_{u}h_{1,u}\rangle|= |\langle U_{u}K_\xi,h_{1,u}\rangle|=\bigg|\int_{\mathbb B_n} (U_{u}K_\xi)(w)\overline{h_{1,u}(w)}dv(w)\bigg|\\
&=\bigg|\int_{\mathbb B_n} K_\xi(\varphi_u(w))\frac{(1-|u|^2)^{\frac{n+1}{2}}}{(1-w\cdot \overline{u})^{n+1}}\overline{h_{1,u}(w)}dv(w)\bigg|.
\end{split}
\end{align}
Applying the change of variables formula in \cite[Proposition 1.13]{Zhu} to the last integral in (\ref{KU}),  we obtain that
\begin{align*}
|\langle K_\xi, U_{u}h_{1,u}\rangle|&=\bigg|\int_{\mathbb B_n} K_\xi(w)\frac{(1-|u|^2)^{\frac{n+1}{2}}}{[1-\varphi_u(w)\cdot \overline{u}]^{n+1}}\overline{h_{1,u}(\varphi_u(w))} \frac{(1-|u|^2)^{n+1}}{|1- w\cdot\overline{u}|^{2n+2}}dv(w)\bigg|\\
& \leqslant\int_{\mathbb B_n}  |K_\xi(w)|\frac{|1- w\cdot\overline{u}|^{n+1}}{(1-|u|^2)^{\frac{n+1}{2}}}\big|\overline{h_{1,u}(\varphi_u(w))}\big| \frac{(1-|u|^2)^{n+1}}{|1- w\cdot \overline{u}|^{2n+2}}dv(w)\\
& \leqslant \sup_{u\in \mathbb B_n} \|h_{1,u}\|_{\infty}\int_{\mathbb B_n} |K_\xi(w)| \frac{(1-|u|^2)^{\frac{n+1}{2}}}{|1-w\cdot \overline{u}|^{n+1}}dv(w).
\end{align*}
Similarly, we have
\begin{align*}
|\langle U_{u}h_{2,u},K_{\zeta}\rangle|\leqslant \sup_{u\in \mathbb B_n} \|h_{2,u}\|_{\infty}\int_{\mathbb B_n}  |K_\zeta(z)| \frac{(1-|u|^2)^{\frac{n+1}{2}}}{|1-z\cdot\overline{u}|^{n+1}}dv(z).
\end{align*}
Thus we obtain by (\ref{F}) that
\begin{align*}
F(\xi,\zeta)&\leqslant \sup_{u\in \mathbb B_n} \|h_{1,u}\|_{\infty}\sup_{u\in \mathbb B_n} \|h_{2,u}\|_{\infty}\times  \text{ I}(\xi, \zeta),
\end{align*}
where
$$\text{I}(\xi, \zeta):=\int_{\mathbb B_n}  \int_{\mathbb B_n} |K_\xi(w)|~|K_\zeta(z)| \sum_{u\in \Gamma} \frac{(1-|u|^2)^{n+1}}{|1- w\cdot\overline{u}|^{n+1}|1- z\cdot\overline{u}|^{n+1}}dv(z)dv(w).$$

To estimate the above  integral $\text{I}(\xi, \zeta)$, we need to deal with the summation first.  For any $u\in \Gamma$ and $\xi\in D(u,\frac{\delta}{2})$, we have by Lemma \ref{discrete} that
$$ \frac{(1-|u|^2)^{n+1}}{|1- z\cdot\overline{u}|^{n+1}|1- w\cdot\overline{u}|^{n+1}}
\leqslant C_1 \frac{(1-|\xi|^2)^{n+1}}{|1-z\cdot\overline{\xi}|^{n+1}|1- w\cdot\overline{\xi}|^{n+1}}
$$
for some constant $C_1>0$ depending only on $\delta$. This gives that
\begin{align*}
&\sum_{u\in\Gamma}\frac{(1-|u|^2)^{n+1}}{|1- z\cdot\overline{u}|^{n+1}|1- w\cdot\overline{u}|^{n+1}}\\
& \leqslant \sum_{u\in\Gamma}\frac{C_1}{\lambda(D(u,\frac{\delta}{2}))}\int_{D(u,\frac{\delta}{2})}\frac{(1-|\xi|^2)^{n+1}}{|1-z\cdot\overline{\xi}|^{n+1}|1- w\cdot\overline{\xi}|^{n+1}}d\lambda(\xi)\\
& \leqslant  C_2\int_{\mathbb B_n}\frac{dv(u)}{|1-z\cdot\overline{u}|^{n+1}|1- w\cdot\overline{u}|^{n+1}}\\
&= C_2 \int_{\mathbb B_n} |K_z(u)|~ |K_u(w)|dv(u),
\end{align*}
where $C_2$ is a positive constant depending only on $\delta$. Therefore,
$$\text{I}(\xi, \zeta)\leqslant C_2\int_{\mathbb B_n} \int_{\mathbb B_n} |K_\xi(w)|~|K_{\zeta}(z)|\int_{\mathbb B_n} |K_z(u)|~|K_u(w)|dv(u)dv(z)dv(w).$$
Denoting the integral on the right hand side of the above inequality by $\text{II}(\xi, \zeta)$, then we have that $\text{II}(\xi, \zeta)=\text{II}(\zeta, \xi)$ and
\begin{align*}
F(\xi,\zeta)&\leqslant C_2 \sup_{u\in \mathbb B_n} \|h_{1,u}\|_{\infty}\sup_{u\in \mathbb B_n} \|h_{2,u}\|_{\infty}\times \text{II}(\xi, \zeta).
\end{align*}
This gives us that
\begin{align*}
&\sup_{\zeta\in \mathbb B_n}\int_{\mathbb B_n} F(\xi,\zeta)\frac{\|K_\xi\|^{bq}_2}{\|K_\zeta\|^{bq}_2}dv(\xi)\\
&\leqslant C_2 \sup_{u\in \mathbb B_n} \|h_{1,u}\|_{\infty}\sup_{u\in \mathbb B_n}\|h_{2,u}\|_{\infty} \bigg( \sup_{\zeta\in \mathbb B_n}  \int_{\mathbb B_n} \text{II}(\xi, \zeta)\frac{\|K_\xi\|^{bq}_2}{\|K_\zeta\|^{bq}_2}dv(\xi)\bigg),
\end{align*}
where
\begin{align}\label{II}
\begin{split}
&\int_{\mathbb B_n} \text{II}(\xi, \zeta)\frac{\|K_\xi\|^{bq}_2}{\|K_\zeta\|^{bq}_2}dv(\xi)\\
&=\int_{\mathbb B_n} \int_{\mathbb B_n} \int_{\mathbb B_n}\int_{\mathbb B_n}|K_\xi(w)K_\zeta(z)K_z(u)K_u(w)|\frac{\|K_\xi\|^{bq}_2}{\|K_\zeta\|^{bq}_2}dv(\xi)dv(u)dv(z)dv(w).
\end{split}
\end{align}
Note that the  integrand in (\ref{II}) can be written as
\begin{align*}
&|K_\xi(w)K_\zeta(z)K_z(u)K_u(w)|\frac{\|K_\xi\|^{bq}_2}{\|K_\zeta\|^{bq}_2}\\
&=|K_z(\zeta)|\frac{\|K_z\|^{bq}_2}{\|K_\zeta\|^{bq}_2}|K_u(z)|\frac{\|K_u\|^{bq}_2}{\|K_z\|^{bq}_2} |K_w(u)|\frac{\|K_w\|^{bq}_2}{\|K_u\|^{bq}_2}   |K_\xi(w)|\frac{\|K_\xi\|^{bq}_2}{\|K_w\|^{bq}_2}.
\end{align*}
By \cite[Theorem 1.12]{Zhu2}, we have
\begin{align*}
\sup_{\zeta \in \mathbb B_n}\int_{\mathbb B_n}|K_z(\zeta)|\frac{\|K_z\|_2^{bq}}{\|K_\zeta\|_2^{bq}}dv(z)&=\sup_{\zeta\in \mathbb B_n}\int_{\mathbb B_n}\frac{(1-|z|^2)^{\frac{-bq(n+1)}{2}}(1-|\zeta|^2)^{\frac{bq(n+1)}{2}}}{|1-\zeta\cdot \overline{z}|^{n+1}}dv(z)\\
&=C_3<\infty
\end{align*}
if  $\frac{bq(n+1)}{2}<1$.
Now we choose $b>0$ such that $$\frac{bq(n+1)}{2}<1  \ \ \  \ \ \mathrm{and} \ \ \ \ \  \frac{bp(n+1)}{2}<1.$$
In this case, we obtain that the multiple integral in (\ref{II}) is less than some positive constant $C_3'.$
This leads to
$$\sup_{\zeta\in \mathbb B_n}\int_{\mathbb B_n} F(\xi,\zeta)\frac{\|K_{\xi}\|^{bq}_2}{\|K_{\zeta}\|^{bq}_2}dv(\xi)\leqslant C_4 \sup_{u\in \mathbb B_n} \|h_{1,u}\|_{\infty}\sup_{u\in \mathbb B_n}\|h_{2,u}\|_{\infty}.$$
Similarly, we have
$$\sup_{\xi\in \mathbb B_n}\int_{\mathbb B_n} F(\xi,\zeta)\frac{\|K_{\zeta}\|^{bp}_2}{\|K_{\xi}\|^{bp}_2}dv(\zeta) \leqslant C_5\sup_{u\in \mathbb B_n} \|h_{1,u}\|_{\infty}\sup_{u\in \mathbb B_n}\|h_{2,u}\|_{\infty}.$$
Combining the two inequalities above gives that
$$\sum_{u\in\Gamma}\Big|h(u)\langle f, U_{u}h_{1,u} \rangle\langle U_{u}h_{2,u},g\rangle\Big|\leqslant
C \|h\|_{\infty} \sup_{u\in \mathbb{B}_n}\|h_{1,u}\|_{\infty}\sup_{u\in \mathbb{B}_n}\|h_{2, u}\|_{\infty}\|f\|_p \|g\|_q$$
for some constant $C$ depending only on $\delta$, which is the first conclusion of this lemma.

Observe that the proof of  the above inequality implies that
$$\int_{\mathbb{B}_n}\Big|h(u)\langle f, U_{u}h_{1,u}\rangle\langle U_{u}h_{2,u},g\rangle\Big| d\lambda(u)\leqslant
 C' \|h\|_{\infty} \sup_{u\in \mathbb{B}_n}\|h_{1,u}\|_{\infty}\sup_{u\in \mathbb{B}_n}\|h_{2,u}\|_{\infty}\|f\|_p \|g\|_q$$
for some absolute constant  $C'>0$. This proves the second inequality in the lemma.

To obtain the last conclusion in our lemma, we first recall that
$$\|k_u\|_p\lesssim (1-|u|^2)^{\frac{n+1}{2}-\frac{n+1}{q}}, \ \ \ u\in \mathbb B_n.$$
Then  we have by (\ref{Uf}) that
\begin{align*}
|\langle TU_uh_{1,u}, U_uh_{2,u}\rangle|&\leqslant |\langle T h_{1,u}\circ\varphi_uk_u, h_{2,u}\circ\varphi_uk_u\rangle|\\
&\leqslant \|T\| ~ \|h_{1,u}\circ\varphi_uk_u\|_p\|h_{2,u}\circ\varphi_uk_u\|_q\\
&\leqslant \|T\|\sup_{u\in \mathbb B_n}\|h_{1,u}\|_{\infty}\sup_{u\in \mathbb B_n}\|h_{2,u}\|_{\infty}\|k_u\|_p\|k_u\|_q\\
& \lesssim \|T\|\sup_{u\in \mathbb B_n}\|h_{1,u}\|_{\infty}\sup_{u\in \mathbb B_n}\|h_{2,u}\|_{\infty}(1-|u|^2)^{\frac{n+1}{2}-\frac{n+1}{q}}
(1-|u|^2)^{\frac{n+1}{2}-\frac{n+1}{p}}\\
&= \|T\|\sup_{u\in \mathbb B_n}\|h_{1,u}\|_{\infty}\sup_{u\in \mathbb B_n}\|h_{2,u}\|_{\infty},
\end{align*}
as desired. This finishes the proof of Lemma \ref{bounded}.
\end{proof}

Using the same notations as in Lemma \ref{bounded},  $\Gamma$ denotes a separated set, $h$ is a bounded function on $\mathbb B_n$, $\{h_{1,u}\}$ and $\{h_{2,u}\}$ are two families of  bounded analytic functions on $\mathbb B_n$ with the property that
$$\sup_{u\in \mathbb B_n}\|h_{1,u}\|_{\infty}\sup_{u\in \mathbb B_n}\|h_{2,u}\|_{\infty}<\infty.$$
With the help of Lemma \ref{bounded}, now we can define two bounded linear operators
$$\sum_{u\in \Gamma} h(u)(U_{u}h_{1,u})\otimes (U_{u}h_{2,u}) \ \ \ \text{ and }\ \ \
\int_{\mathbb{B}_n} h(u)(U_{u}h_{1,u})\otimes (U_{u}h_{2,u})d\lambda(u)$$
as follows:
$$
\Big\langle \sum_{u\in \Gamma} h(u)(U_{u}h_{1,u})\otimes (U_{u}h_{2,u})f,g\Big\rangle
=\sum_{u\in \Gamma}h(u)\big\langle f, U_{u}h_{2,u}\big\rangle\big\langle U_{u}h_{1,u},g\big\rangle
$$
and
$$\Big\langle \int_{\mathbb{B}_n} h(u)(U_{u}h_{1,u})\otimes (U_{u}h_{2,u}) d\lambda(u) f,g\Big\rangle
=\int_{\mathbb{B}_n} h(u)\big\langle f, U_{u}h_{2,u}\big\rangle\big\langle U_{u}h_{1,u},g\big\rangle d\lambda(u),$$
where $f\in L_a^p$ and $g\in L_a^q$.  Noting that if  $h_{1,u}=h_{2,u}=1$, we have
$$\int_{\mathbb{B}_n} h(u)(U_{u}h_{1,u})\otimes (U_{u}h_{2,u})d\lambda(u)=\int_{\mathbb{B}_n} h(u)(k_u\otimes k_u) d\lambda(u),$$
and hence
\begin{align*}\label{Toepintegral}
\begin{split}
\Big\langle\int_{\mathbb{B}_n} h(u)(k_u\otimes k_u) d\lambda(u)f,g\Big\rangle
&=\int_{\mathbb{B}_n} h(u)\langle f,k_u\rangle  \langle k_u,g\rangle d\lambda(u)\\
&=\langle T_h f,g\rangle.
\end{split}
\end{align*}
This implies that the operator
$$\int_{\mathbb{B}_n} h(u)(k_u\otimes k_u) d\lambda(u)$$
is actually a Toeplitz operator with symbol $h$.

To establish Theorem \ref{integralrepresantation}, we need one more simple  lemma.
\begin{lem}\label{difference}
For each $u$ and $v$ in $D(0,r)$, we have
\begin{equation*}
\|k_u-k_v\|_{\infty}\leqslant C_r \beta(u,v),
\end{equation*}
where $C_r$ is a positive constant depending only on $r$.
\end{lem}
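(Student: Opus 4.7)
The plan is to reduce the estimate to the two uniform bounds supplied by Lemma \ref{discrete}. Writing out the normalized reproducing kernel
$$k_u(w)=\frac{(1-|u|^2)^{(n+1)/2}}{(1-w\cdot\overline{u})^{n+1}},$$
I will factor the difference as
$$k_u(w)-k_v(w)=k_v(w)\Bigl[A(u,v)\,B(u,v,w)-1\Bigr],$$
where $A(u,v):=(1-|u|^2)^{(n+1)/2}/(1-|v|^2)^{(n+1)/2}$ and $B(u,v,w):=(1-w\cdot\overline{v})^{n+1}/(1-w\cdot\overline{u})^{n+1}$. This identity is just algebra, but the key is that it isolates the $w$-dependence inside factors that Lemma \ref{discrete} was designed to control.

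For $u,v\in D(0,r)$ the triangle inequality gives $\beta(u,v)\leqslant 2r$, so Lemma \ref{discrete} applies with $R=2r$ and the exponent $b=(n+1)/2$ (resp.\ $b=n+1$), yielding
$$|A(u,v)-1|\leqslant C\beta(u,v),\qquad |B(u,v,w)-1|\leqslant C\beta(u,v),$$
with $C=C(r)$ independent of $w\in\mathbb B_n$. Expanding
$$AB-1=(A-1)(B-1)+(A-1)+(B-1),$$
and using $\beta(u,v)\leqslant 2r$ to swallow the quadratic term, I obtain $|A(u,v)B(u,v,w)-1|\leqslant C_r'\,\beta(u,v)$ uniformly in $w$.

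It remains to control $\|k_v\|_\infty$ for $v\in D(0,r)$. Since $|v|\leqslant \tanh(r)$ by (\ref{pseudohyper}), the elementary bound $|1-w\cdot\overline{v}|\geqslant 1-|w||v|\geqslant 1-\tanh(r)$ gives
$$|k_v(w)|\leqslant \frac{(1-|v|^2)^{(n+1)/2}}{(1-\tanh(r))^{n+1}}\leqslant \frac{1}{(1-\tanh(r))^{n+1}}.$$
Multiplying the last two estimates produces the desired bound $\|k_u-k_v\|_\infty\leqslant C_r\beta(u,v)$.

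I do not anticipate a serious obstacle: the real content is already packaged in Lemma \ref{discrete}, whose constants depend only on the radius and the exponent, not on the auxiliary point $z$ (here playing the role of $w$). The only mild subtlety is that one must check that the Bergman-metric radius $R$ needed in Lemma \ref{discrete} can be taken independently of $w$, which follows immediately from $\beta(u,v)\leqslant 2r$.
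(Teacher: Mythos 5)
Your proof is correct and follows essentially the same route as the paper: both arguments reduce the estimate to the two ratio bounds of Lemma \ref{discrete} (with constants uniform in the third variable $w$) together with the uniform bound $\sup_{v\in D(0,r)}\|k_v\|_\infty\leqslant C_r$. The only cosmetic difference is that you factor the difference multiplicatively as $k_v\,(AB-1)$ and expand, whereas the paper splits additively by inserting a cross term; the ingredients and constants are the same.
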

\begin{proof}
Let $r>0$, $z\in\mathbb{B}_n$ and $u, v\in D(0,r)$. We have by Lemma \ref{discrete} that
\begin{align*}
&|k_u(z)-k_v(z)|\\
&=\bigg|\frac{(1-|u|^2)^{\frac{n+1}{2}}}{(1- z\cdot\overline{u})^{n+1}}-\frac{(1-|v|^2)^{\frac{n+1}{2}}}{(1-z\cdot\overline{v})^{n+1}}\bigg|\\
&= \bigg|\frac{(1-|u|^2)^{\frac{n+1}{2}}(1- z\cdot\overline{v})^{n+1}-(1-|v|^2)^{\frac{n+1}{2}}(1-z\cdot\overline{u})^{n+1}}{|1- z\cdot\overline{u}|^{n+1}|1- z\cdot\overline{v}|^{n+1}}\bigg|\\
&\leqslant \bigg|\frac{(1-|u|^2)^{\frac{n+1}{2}}-(1-|v|^2)^{\frac{n+1}{2}}}{|1- z\cdot\overline{u}|^{n+1}}\bigg|+
(1-|v|^2)^{\frac{n+1}{2}}\bigg|\frac{(1- z\cdot\overline{v})^{n+1}-(1- z\cdot\overline{u})^{n+1}}{|1- z\cdot\overline{u}|^{n+1}|1- z\cdot\overline{v}|^{n+1}}\bigg|\\
& \leqslant  C_r' \big[\beta(u,v)|k_v(z)|+\beta(u,v)|k_u(z)|\big]\\
& \leqslant  C_r \beta(u,v),
\end{align*}
where the last inequality follows from that
$$|k_u(z)|+|k_v(z)|\leqslant 2^{\frac{n+1}{2}}\bigg[\frac{(1-|u|)^{\frac{n+1}{2}}}{(1-|u|)^{n+1}}+\frac{(1-|v|)^{\frac{n+1}{2}}}{(1-|v|)^{n+1}}\bigg]\leqslant C_r''$$
for $u, v\in D(0, r)$ and $z\in \mathbb B_n$. Here, $C_r', C_r$ and $C_r''$ denote the positive constants depending only on $r$.  This proves the lemma.
\end{proof}

We are now in position to prove the main result of this section.
\begin{proof}[Proof of Theorem \ref{integralrepresantation}]
First, combining Lemmas \ref{bounded} and \ref{difference}  gives us that the mapping
 $$v \mapsto  \int_{\mathbb B_n} \langle TU_uk_0,U_u k_{v}\rangle (U_u k_{v})\otimes (U_uk_0)d\lambda(u)$$
 is uniformly continuous  and  uniformly bounded on each $D(0, r)$. This yields that the integral
$$\int_{D(0,r)}\int_{\mathbb B_n} \langle TU_uk_0,U_u k_{v}\rangle (U_u k_{v})\otimes (U_uk_0)d\lambda(u) d\lambda(v)$$
is convergent in the norm topology.

Let $T$ be in $\mathcal{A}_s^p$.  Then the  M\"{o}bius-invariance of $d\lambda$ gives that
\begin{align*}
\langle Tf,g\rangle&=\int_{\mathbb B_n} \langle Tf,k_v\rangle \langle k_v,g\rangle d\lambda(v)\\
&=\int_{\mathbb B_n} \langle f,T^* k_v\rangle \langle k_v,g\rangle d\lambda(v)\\
&=\int_{\mathbb B_n} \int_{\mathbb B_n} \langle f,k_u\rangle \langle k_u,T^* k_v\rangle d\lambda(u)\langle k_v,g\rangle d\lambda(v)\\
&=\int_{\mathbb B_n} \int_{\mathbb B_n} \langle f,k_u\rangle \langle Tk_u, k_v\rangle d\lambda(u)\langle k_v,g\rangle d\lambda(v)\\
&=\int_{\mathbb B_n} \int_{\mathbb B_n} \langle f,k_u\rangle \langle Tk_u, k_{\varphi_u(v)}\rangle\langle k_{\varphi_u(v)},g\rangle d\lambda(u) d\lambda(v)\\
&=\int_{\mathbb B_n} \int_{\mathbb B_n} \langle f,U_uk_0\rangle \langle TU_uk_0, U_uk_{v}\rangle\langle U_uk_{v},g\rangle d\lambda(u)d\lambda(v)
\end{align*}
for each $f\in L^p_a$ and $g\in L_a^q$. This implies that
\begin{align*}
&\langle Tf,g\rangle-\Big\langle\int_{D(0,r)}\int_{\mathbb B_n} \langle TU_uk_0,U_u k_{v}\rangle (U_u k_{v})\otimes (U_uk_0) d\lambda(u) d\lambda(v)f,g\Big\rangle\\
&=\langle Tf,g\rangle-\int_{D(0,r)} \int_{\mathbb B_n} \langle f,U_uk_0\rangle \langle TU_uk_0, U_uk_{v}\rangle\langle U_uk_{v},g\rangle d\lambda(u) d\lambda(v)\\
&=\int_{\mathbb B_n\setminus D(0,r)} \int_{\mathbb B_n} \langle f,U_uk_0\rangle \langle TU_uk_0, U_uk_{v}\rangle\langle U_uk_{v},g\rangle d\lambda(u)d\lambda(v)\\
&=\int_{\mathbb B_n\setminus D(0,r)} \int_{\mathbb B_n} \langle f,k_u\rangle \langle Tk_u, k_{\varphi_u(v)}\rangle\langle k_{\varphi_u(v)},g\rangle d\lambda(u)d\lambda(v)\\
&=\int_{\mathbb B_n}\int_{\mathbb B_n\setminus D(0,r)} \langle f,k^{(q)}_u \rangle \big\langle Tk^{(p)}_u, k^{(q)}_{\varphi_u(v)}\big\rangle\big\langle k^{(p)}_{\varphi_u(v)},g\big\rangle d\lambda(v)d\lambda(u),
\end{align*}
where the last equality follows from (\ref{k}):
$$k_z^{(p)}(w)=\frac{(1-|z|^2)^{\frac{n+1}{q}}}{(1-w\cdot\overline{z})^{n+1}} \ \ \ \mathrm{and} \ \ \ k_z^{(q)}(w)=\frac{(1-|z|^2)^{\frac{n+1}{p}}}{(1-w\cdot\overline{z})^{n+1}} $$
for $z, w\in \mathbb B_n$. Therefore,
\begin{align*}
&\bigg|\langle Tf,g\rangle-\Big\langle\int_{D(0,r)}\int_{\mathbb B_n} \langle TU_uk_0,U_u k_{v}\rangle (U_u k_{v})\otimes (U_uk_0) d\lambda(u) d\lambda(v)f,g\Big\rangle\bigg|\\
&\leqslant \int_{\mathbb B_n}|\langle f,k^{(q)}_u\rangle|\bigg(\int_{\mathbb B_n\setminus D(u,r)} |\langle Tk^{(p)}_u, k^{(q)}_{v}\rangle|~|\langle k^{(p)}_{v},g\rangle| d\lambda(v)\bigg)d\lambda(u)\\
&\leqslant \bigg(\int_{\mathbb B_n}|\langle f,k^{(q)}_u\rangle|^pd\lambda(u)\bigg)^{\frac{1}{p}} \bigg(\int_{\mathbb B_n}\bigg|\int_{\mathbb B_n\setminus D(u,r)} \langle Tk^{(p)}_u, k^{(q)}_{v}\rangle \langle k^{(p)}_{v},g\rangle d\lambda(v)\bigg|^{q}d\lambda(u)\bigg)^{\frac{1}{q}}\\
&=\|f\|_p \bigg(\int_{\mathbb B_n}\Big|\int_{\mathbb B_n\setminus D(u,r)} \langle Tk^{(p)}_u, k^{(q)}_{v}\rangle\langle k^{(p)}_{v},g\rangle d\lambda(v)\Big|^{q}d\lambda(u)\bigg)^{\frac{1}{q}},
\end{align*}
where the last equality follows from that
$$\int_{\mathbb B_n} |\langle f,k^{(q)}_u\rangle|^p d\lambda(u)=\int_{\mathbb B_n} (1-|u|^2)^{n+1}|f(u)|^p d\lambda(u)=\|f\|^p_p.$$
Furthermore, we note that
\begin{align*}
&\int_{\mathbb B_n}\bigg|\int_{\mathbb B_n\setminus D(u,r)} \langle Tk^{(p)}_u, k^{(q)}_{v}\rangle\langle k^{(p)}_{v},g\rangle d\lambda(v)\bigg|^{q}d\lambda(u)\\
&\leqslant \int_{\mathbb B_n}\bigg[\int_{\mathbb B_n\setminus D(u,r)} |\langle Tk^{(p)}_u, k^{(q)}_{v}\rangle|^{\frac{1}{p}} \frac{\|K_u\|_2^b}{\|K_v\|_2^b}|\langle Tk^{(p)}_u, k^{(q)}_{v}\rangle|^{\frac{1}{q}}\frac{\|K_v\|_2^b}{\|K_u\|_2^b}|\langle k^{(p)}_{v},g\rangle| d\lambda(v)\bigg]^{q}d\lambda(u),
\end{align*}
where $$b=\frac{2(n+1)-2s}{pq(n+1)}.$$
Using H\"{o}lder's inequality, the above integral does not exceed $\text{I}'\times \text{II}'$, where
$$\text{I}':=\bigg(\sup_{u\in \mathbb B_n}\int_{\mathbb B_n\setminus D(u,r)} |\langle Tk^{(p)}_u, k^{(q)}_{v}\rangle| \frac{\|K_u\|_2^{bp}}{\|K_v\|_2^{bp}}d\lambda(v)\bigg)^{\frac{q}{p}}$$
and $$\text{II}':=\int_{\mathbb B_n} \int_{\mathbb B_n}|\langle Tk^{(p)}_u, k^{(q)}_{v}\rangle|\frac{\|K_v\|_2^{bq}}{\|K_u\|_2^{bq}}|\langle k^{(p)}_{v},g\rangle|^q d\lambda(v)d\lambda(u).$$
Using (\ref{k}) again, we observe that
$$
\text{I}' = \bigg(\sup_{u\in \mathbb B_n}\int_{\mathbb B_n\setminus D(u,r)} |\langle Tk_u, k_{v}\rangle| \frac{\|K_u\|_2^{bp+1-\frac{2}{q}}}{\|K_v\|_2^{bp+1-\frac{2}{q}}}d\lambda(v)\bigg)^{\frac{q}{p}}$$
and
$$\text{II}'= \int_{\mathbb B_n}\int_{\mathbb B_n}|\langle Tk_u, k_{v}\rangle|\frac{\|K_v\|_2^{bq+1-\frac{2}{p}}}{\|K_u\|_2^{bq+1-\frac{2}{p}}}|\langle k^{(p)}_{v},g\rangle|^q d\lambda(v)d\lambda(u).
$$
Elementary calculations give us that $$bp+1-\frac{2}{q}=1-\frac{2s}{q(n+1)}\ \ \  \ \text{and}\ \ \ \    bq+1-\frac{2}{p}=1-\frac{2s}{p(n+1)}.$$
Combining the above expressions for $\text{I}'$ and $\text{II}'$, we obtain that
\begin{align*}
&\int_{\mathbb B_n}\bigg|\int_{\mathbb B_n\setminus D(u,r)} \langle Tk^{(p)}_u, k^{(q)}_{v}\rangle\langle k^{(p)}_{v},g\rangle d\lambda(v)\bigg|^{q}d\lambda(u)\\
&\leqslant \Big(E_{r}(T,s)\Big)^{\frac{q}{p}}
\sup_{v\in \mathbb B_n}\int_{\mathbb B_n}|\langle k_u, T^*k_{v}\rangle|\frac{\|K_v\|^{bq+1-\frac{2}{p}}}{\|K_u\|^{bq+1-\frac{2}{p}}} d\lambda(u)
\int_{\mathbb B_n} |\langle k^{(p)}_{v},g\rangle|^q d\lambda(v)\\
&= \Big(E_{r}(T,s)\Big)^{\frac{q}{p}}E'_0(T,s)\|g\|_q^q.
\end{align*}
This yields
\begin{align*}
&\bigg|\langle Tf,g\rangle-\Big\langle\int_{D(0,r)}\int_{\mathbb B_n} \langle TU_uk_0,U_u k_{v}\rangle (U_u k_{v})\otimes (U_uk_0)d\lambda(u) d\lambda(v)f,g\Big\rangle\bigg|\\
& \leqslant \|f\|_p \bigg(\int_{\mathbb B_n}\Big|\int_{\mathbb B_n\setminus D(u,r)} \langle Tk^{(p)}_u, k^{(q)}_{v}\rangle\langle k^{(p)}_{v},g\rangle d\lambda(v)\Big|^{q}d\lambda(u)\bigg)^{\frac{1}{q}}\\
& \leqslant \Big(E_{r}(T,s)\Big)^{\frac{1}{p}}\Big(E'_0(T,s)\Big)^{\frac{1}{q}}\|f\|_p\|g\|_q.
\end{align*}
According to our assumption that $T\in \mathcal{A}_s^p$, we have
$$\lim_{r\rightarrow\infty}\Big(E_{r}(T,s)\Big)^{\frac{1}{p}}\Big(E'_0(T,s)\Big)^{\frac{1}{q}}=0,$$
to complete the proof of Theorem \ref{integralrepresantation}.
\end{proof}

\section{Toeplitz algebras over the $p$-Bergman space}\label{Bergman-T-A}
In the final section, we will study Toeplitz algebras with symbols in a translation invariant subalgebra  $\mathcal I\subset \mathrm{BUC}(\mathbb{B}_n)$ in terms of  the integral representation obtained in the previous section. More specifically, the following will be established in Theorem \ref{pbergthm} for the Toeplitz algebras
$\mathcal{T}^b[\mathrm{BUC}(\mathbb{B}_n)]$ and $\mathcal{T}^b[C_0(\mathbb{B}_n)]$ over  the $p$-Bergman space $L_a^p$:
$$ \mathcal{T}^b[\mathrm{BUC}(\mathbb{B}_n)]=\mathcal{T}^b_{lin}[\mathrm{BUC}(\mathbb{B}_n)]=\mathrm{clos}(\mathcal{A}_s^p)\text{\quad and\quad}
\mathcal{T}^b[C_0(\mathbb{B}_n)]=\mathcal{T}^b_{lin}[C_0(\mathbb{B}_n)]=\mathcal{K},$$
where $\mathrm{clos}(\mathcal{A}_s^p)$ denotes the closure of $\mathcal{A}_s^p$ with respect to the operator norm, and
$\mathcal{K}$ denotes the ideal of compact operators on $L^p_a$.
The basic strategy used to
prove the above conclusions  is similar to the strategy used in Section \ref{Fock} to prove the corresponding results for the case of $F_t^p$.

Let $H(\mathbb{B}_n)$ be the set of  functions which are holomorphic on the open unit ball $\mathbb B_n$.
Let $G$ be a bilinear map from $H(\mathbb{B}_n)\times H(\mathbb{B}_n)$ to a Banach space $\mathcal{B}$. We suppose that $G(f,g)$ is linear with respect to $f$ and conjugate linear with respect to $g$. We say that $G$ is bounded if
$$\|G(f,g)\|_{\mathcal{B}}\lesssim\|f\|_{\infty}\|g\|_{\infty}$$
for all $f, g\in H(\mathbb{B}_n)$.
\begin{prop}\label{berezin}
Let $G$ be a bounded bilinear map from $H(\mathbb{B}_n)\times H(\mathbb{B}_n)$ to some  Banach space $\mathcal{B}$. Let
$\mathcal{B}_1$ be a closed subspace of $\mathcal{B}$. If $G(k_z,k_z)\in \mathcal{B}_1$ for each $z\in \mathbb{B}_n$, then $G(k_w,k_z)\in \mathcal{B}_1$ for all $z, w\in \mathbb{B}_n$.
\end{prop}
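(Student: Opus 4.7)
The plan is to mimic the argument from Proposition \ref{fockberezin} almost verbatim, adapting it to the Bergman reproducing kernel and the sup-norm boundedness assumption on $G$. Since $k_w = (1-|w|^2)^{(n+1)/2} K_w$ with a positive real scalar, the conclusion $G(k_w, k_z) \in \mathcal{B}_1$ is equivalent to $G(K_w, K_z) \in \mathcal{B}_1$, so I work with $K_w, K_z$ from now on. Fix $r \in (0,1)$ and restrict attention to $z, w$ with $|z|, |w| \leqslant r$; it suffices to prove the conclusion on each such compactum.

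The first main step is a power-series expansion. Writing $g_a(\xi) = \xi^a$ and $c_a = \frac{(n+|a|)!}{n!\, a!}$, one has the absolutely convergent expansion $K_z(\xi) = \sum_a c_a \overline{z}^a g_a(\xi)$; since $|\overline{z}^a \xi^a| \leqslant r^{|a|}$ uniformly for $\xi \in \mathbb{B}_n$ and $|z| \leqslant r$, the partial sums converge to $K_z$ in $\|\cdot\|_\infty$ and in fact uniformly in $z$ on $|z|\leqslant r$. Using the boundedness of $G$ with respect to the sup norm, I can therefore reduce the problem to proving that $G(g_a, g_b) \in \mathcal{B}_1$ for every pair of multi-indices $(a, b)$. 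Introducing $K_{z,a} := \partial^a K_u / \partial \overline{u}^a\big|_{u=z}$, one checks $K_{0,a} = c_a\, a!\, g_a$, so the reduction becomes: $G(K_{0,a}, K_{0,b}) \in \mathcal{B}_1$ for all $a, b$. To make the induction work, I actually prove the stronger statement $G(K_{z,a}, K_{z,b}) \in \mathcal{B}_1$ for all $z \in \mathbb{B}_n$ and all multi-indices $a, b$, by induction on $|a+b|$.

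The base case $a = b = 0$ is the hypothesis. For the inductive step, assuming the claim holds when $|a+b| \leqslant m$, I take $|a+b| = m+1$, WLOG $a_1 \geqslant 1$, write $a = a' + e_1$, and compute the difference quotient
\begin{align*}
\frac{1}{s}\bigl[G(K_{z+s e_1, a'}, K_{z+s e_1, b}) - G(K_{z, a'}, K_{z, b})\bigr]
\end{align*}
for small real $s$. Using the dominated convergence theorem together with Lemma \ref{discrete} to justify $\|(K_{z+se_1, a'} - K_{z,a'})/s - K_{z,a}\|_\infty \to 0$ on compacta, the limit equals $G(K_{z,a}, K_{z,b}) + G(K_{z,a'}, K_{z,b+e_1})$ and hence lies in $\mathcal{B}_1$. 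Repeating with $\mathrm{i}s$ in place of $s$, the conjugate-linearity of $G$ in the second slot flips a sign, yielding $-G(K_{z,a}, K_{z,b}) + G(K_{z,a'}, K_{z,b+e_1}) \in \mathcal{B}_1$. Adding gives $2G(K_{z,a}, K_{z,b}) \in \mathcal{B}_1$, closing the induction.

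The main obstacle, though largely technical, is the bookkeeping in the induction step: one must track carefully how the anti-holomorphic dependence of $K_{u,a}$ on $u$ interacts with the conjugate-linearity of $G$ in the second argument, so that the real-direction and imaginary-direction derivatives combine with opposite signs on the term one wants to isolate. Once the induction is complete, $G(g_a, g_b) \in \mathcal{B}_1$ for all $a, b$, and the uniform sup-norm convergence of the partial sums of $K_w$ and $K_z$ (together with the closedness of $\mathcal{B}_1$ and the bilinear boundedness of $G$) gives $G(K_w, K_z) \in \mathcal{B}_1$ for all $z, w \in \mathbb{B}_n$, as required.
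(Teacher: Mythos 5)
Your proposal is correct and follows essentially the same route as the paper's proof: the same reduction from $k$ to $K$, the same sup-norm-convergent monomial expansion of $K_z$, and the same induction on $|a+b|$ using real- and imaginary-direction difference quotients of $K_{z,a}$. (One trivial bookkeeping point: with the signs as you state them, it is the \emph{difference} of the two relations, not the sum, that isolates $2G(K_{z,a},K_{z,b})$ — but since both combinations lie in the subspace $\mathcal{B}_1$, nothing is affected.)
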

\begin{proof} Note that $G(k_z,k_w)\in \mathcal{B}_1$ is equivalent to $G(K_z,K_w)\in \mathcal{B}_1$ for $z, w\in \mathbb B_n$. Thus, we only need to show that
$G(K_z,K_w)\in \mathcal{B}_1$ if $G(K_z,K_z)\in \mathcal{B}_1$.

Fix a multi-index $a=(a_1,a_2,\dots,a_n)$ with $a_j\geqslant 0$.  Let $e_a$ be the  function  on $\mathbb B_n$ of the form:
$$e_{a}(v)=v^a=v_1^{a_1}v_2^{a_2}\cdots v_n^{a_n}.$$
For any $r>0$ and $z\in D(0,r)$,  the reproducing kernel can be represented as:
$$K_z=\sum_{a}c_a \overline{z}^a e_a,$$
where each $c_a$ is a nonzero constant. Moreover, the summation  above is uniformly convergent on each $D(0, r)$, i.e.,
$$\lim_{m\rightarrow\infty}\sup_{z\in D(0,r)}\Big\|K_z-\sum_{|a|\leqslant m}c_a \overline{z}^a e_a\Big\|_{\infty}=0,$$
where $|a|=a_1+a_2+\cdots+a_n$ and $0<r<\infty$.

As $G$ is bounded, we only need to show that $G(e_a,e_b)\in \mathcal{B}_1$ for any two multi-indices $a$ and $b$. Denote
$$K_{z, a}:=\frac{\partial^{a} K_u}{\partial \overline{u}^a}\Big|_{u=z}.$$
 It is sufficient to show that $G(K_{0, a}, K_{0, b})\in \mathcal{B}_1$ for all multi-indices $a$ and $b$,
 since $K_{z, a}\big|_{z=0}=c_a a! e_a$.  In a similar way as in the proof of Proposition \ref{fockberezin}, we will use induction and show
that $G(K_{z, a},K_{z, b})\in \mathcal{B}_1$ for all  multi-indices $a$ and $b$, and $z\in \mathbb B_n$.

First,  we have by the assumption that
$$G(K_{z, 0}, K_{z, 0})\in \mathcal{B}_1.$$
For any positive integer $m$, we suppose that
$$G(K_{z, a}, K_{z, b})\in \mathcal{B}_1$$
for any $a$ and $b$ with $|a+b|\leqslant m.$  Let $a',b'$ be two indices such that $$|a'+b'|=m+1.$$ Without loss of generality, we may assume that
there exist multi-indices $a$ and $b$ such that
$$a'=a+e, \ \ \ \  b'=b,$$
where $e=(1,0,\cdots,0)$.  Since $|a+b|\leqslant m$, we have
$$G(K_{z, a}, K_{z, b})\in \mathcal{B}_1.$$

On one hand, we have
\begin{align*}
&\frac{1}{t}\Big(G(K_{z+te, a}, K_{z+te, b})-G(K_{z, a}, K_{z, b})\Big)\\
&=G\bigg(\frac{K_{z+te, a}-K_{z, a}}{t},K_{z+te, b}\bigg)+G\bigg(K_{z, a},\frac{K_{z+te, b}-K_{z, b}}{t}\bigg).
\end{align*}
Since
$$\lim_{t\rightarrow0}\Big\|\frac{K_{z+te, a}-K_{z, a}}{t}-K_{z, a+e}\Big\|_{\infty}=0 \ \ \ \text{ and } \ \ \ \lim_{t\rightarrow 0}\big\|K_{z+te, b}-K_{z, b}\big\|_{\infty}=0,$$
we have by the boundedness of $G$ that
\begin{align*}
&G(K_{z, a+e}, K_{z, b})+G(K_{z, a}, K_{z, b+e})\\
&=\lim_{t\rightarrow 0}G\Bigg(\frac{K_{z+te, a}-K_{z, a}}{t},K_{z+te, b}\bigg)+\lim_{t\rightarrow0}G\bigg(K_{z, a},\frac{K_{z+te, b}-K_{z, b}}{t}\bigg)\\
&=\lim_{t\rightarrow 0}\frac{1}{t}\Big[G(K_{z+te, a},K_{z+te, b})-G(K_{z, a},K_{z, b})\Big]\in\mathcal{B}_1.
\end{align*}

On the other hand,
\begin{align*}
&G(K_{z, a+e},K_{z, b})-G(K_{z, a}, K_{z, b+e)})\\
&=\lim_{t\rightarrow0}G\bigg(\frac{K_{z+\mathrm{i}te, a}-K_{z, a}}{\mathrm{i}t},K_{z+\mathrm{i}te, b}\bigg)
-\lim_{t\rightarrow0}G\bigg(K_{z, a},\frac{K_{z+\mathrm{i}te, b}-K_{z, b}}{\mathrm{i}t}\bigg)\\
&=\lim_{t\rightarrow0}\bigg[G\bigg(\frac{K_{z+\mathrm{i}te, a}-K_{z, a}}{\mathrm{i}t},K_{z+\mathrm{i}te, b}\bigg)
-G\bigg(K_{z, a},\frac{K_{z+\mathrm{i}te, b}-K_{z, b}}{\mathrm{i}t}\bigg)\bigg]\\
&=\lim_{t\rightarrow 0}\frac{1}{\mathrm{i}t}\bigg[G\bigg(K_{z+\mathrm{i}te, a}-K_{z, a},K_{z+\mathrm{i}te, b}\bigg)
+G\bigg(K_{z, a},K_{z+\mathrm{i}te, b}-K_{z, b}\bigg)\bigg]\\
&=\lim_{t\rightarrow0}\frac{1}{\mathrm{i}t}\Big[G\big(K_{z+\mathrm{i}te, a},K_{z+\mathrm{i}te, b}\big)-G\big(K_{z, a},K_{z, b}\big)\Big]\in\mathcal{B}_1.
\end{align*}
Thus we conclude that
$$2G(K_{z, a+e},K_{z, b})=G(K_{z, a+e},K_{z, b})+G(K_{z, a},K_{z, b+e})
+G(K_{z, a+e},K_{z, b})-G(K_{z, a}, K_{z, b+e})$$
belongs to $\mathcal{B}_1$.
This completes the proof of Proposition \ref{berezin}.
\end{proof}
Applying Proposition \ref{berezin} to the bilinear form $G(f, g)=\big \langle TU_{(\cdot)}f,U_{(\cdot)}g \big\rangle$, we  deduce that $\big \langle TU_{(\cdot)}k_z,U_{(\cdot)}k_w \big\rangle$ belongs to a translation invariant closed subalgebra of $\mathrm{BUC}(\mathbb{B}_n)$ if the Berezin transform of $T$ does.
\begin{cor}\label{biberezin}
Let $T$ be a bounded linear operator on $L_a^p$ and $\mathcal{I}$ be a translation invariant closed subspace of $\mathrm{BUC}(\mathbb{B}_n)$.
If the Berezin transform of $T$ is in $\mathcal{I}$, then
$$\big \langle TU_{(\cdot)}k_z,U_{(\cdot)}k_w \big\rangle\in \mathcal{I}$$
for all $z,w\in \mathbb{B}_n$.
\end{cor}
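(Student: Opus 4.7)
The plan is to apply Proposition \ref{berezin} to the bilinear map
$$G(f,g)(u)=\langle TU_uf,\,U_ug\rangle,\qquad u\in\mathbb B_n,$$
defined on $H(\mathbb B_n)\times H(\mathbb B_n)$. First I would verify the hypotheses. The map is linear in $f$ and conjugate linear in $g$ by the definition of the $L^p$--$L^q$ pairing. For boundedness in the sense required by Proposition \ref{berezin}, I apply the last conclusion of Lemma \ref{bounded} with the constant families $h_{1,u}\equiv f$ and $h_{2,u}\equiv g$, which gives
$$\sup_{u\in\mathbb B_n}|G(f,g)(u)|\leqslant C''\,\|T\|\,\|f\|_\infty\|g\|_\infty.$$
Hence $G$ is a bounded bilinear map into $\mathcal B=L^\infty(\mathbb B_n)$, and the closed subspace $\mathcal B_1=\mathcal I\subset\mathrm{BUC}(\mathbb B_n)\subset L^\infty(\mathbb B_n)$ plays the role of $\mathcal B_1$ in the proposition.

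Next I would check the key hypothesis $G(k_z,k_z)\in\mathcal I$ for every $z\in\mathbb B_n$. Using the identity $U_uk_z=\eta(u,z)k_{\varphi_u(z)}$ with $|\eta(u,z)|=1$ recorded just before (\ref{U}), one computes
$$G(k_z,k_z)(u)=|\eta(u,z)|^2\,\langle Tk_{\varphi_u(z)},k_{\varphi_u(z)}\rangle=\widetilde T(\varphi_u(z))=(\tau_z\widetilde T)(u),$$
where the last equality is the definition (\ref{tau}) of $\tau_z$. Since $\widetilde T\in\mathcal I$ by hypothesis and $\mathcal I$ is translation invariant on $\mathbb B_n$, we conclude $\tau_z\widetilde T\in\mathcal I$, that is, $G(k_z,k_z)\in\mathcal I$ for each $z$.

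Finally I would invoke Proposition \ref{berezin} to upgrade this diagonal information: $G(k_w,k_z)\in\mathcal I$ for every $z,w\in\mathbb B_n$, which is the asserted statement. This step is automatic once the two verifications above are in place.

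I do not expect a serious obstacle here, since the two substantive tools (the boundedness estimate of Lemma \ref{bounded} and the polarization-type Proposition \ref{berezin}) are already established. The only subtlety worth flagging is making sure $\mathcal I$ is a genuine closed subspace of the target Banach space $L^\infty(\mathbb B_n)$; this follows from the fact that $\mathrm{BUC}(\mathbb B_n)$ is norm-closed in $L^\infty(\mathbb B_n)$ (uniform limits of bounded uniformly continuous functions are uniformly continuous) together with the standing assumption that $\mathcal I$ is closed in $\mathrm{BUC}(\mathbb B_n)$.
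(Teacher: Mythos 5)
Your proposal is correct and follows essentially the same route as the paper: the diagonal identity $\langle TU_{(\cdot)}k_z,U_{(\cdot)}k_z\rangle=\tau_z(\widetilde T)\in\mathcal I$ via translation invariance, the boundedness of the bilinear form from the last conclusion of Lemma \ref{bounded}, and then Proposition \ref{berezin} to pass from the diagonal to general pairs $(k_w,k_z)$. Your extra remarks (the cancellation of the unimodular factor $\eta(u,z)$ and the closedness of $\mathcal I$ in $L^\infty(\mathbb B_n)$) are details the paper leaves implicit, and they are handled correctly.
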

\begin{proof}
Since $\mathcal{I}$ is translation invariant and
$\widetilde{T}\in \mathcal{I},$ we have
$$ \big\langle TU_{(\cdot)}k_z,U_{(\cdot)}k_z \big\rangle=\big\langle Tk_{\varphi_{(\cdot)}(z)},k_{\varphi_{(\cdot)}(z)}\big\rangle= \widetilde{T}(\varphi_{(\cdot)}(z))=\tau_z(\widetilde{T})\in \mathcal{I}$$
for any $z\in \mathbb B_n$.
By Lemma \ref{bounded} and Proposition \ref{berezin}, we get that
$$\big\langle TU_{(\cdot)}k_z,U_{(\cdot)}k_w \big\rangle\in \mathcal{I}$$
for all $z, w\in \mathbb B_n$. This completes the proof.
\end{proof}

Since the Bergman metric is M$\mathrm{\ddot{o}}$bius invariant,  $\{ \varphi_{z}(u):  u\in \Gamma\}$ is separated for any $z$ when $\Gamma$ is separated. Although the set  $\{ \varphi_{u}(z):  u\in \Gamma\}$ may not be separated,  we have the following lemma in the case
that $\Gamma$ is $c$-separated (see Definition \ref{separated}).
\begin{lem}\label{separation}
Let $r>0$, $0\leqslant \rho<1$ and  $\Gamma$ be a $c$-separated set in $\mathbb B_n$. Then
there exist  a positive integer $m$ and a finite partition $\Gamma=\Gamma_1\cup \dots \cup\Gamma_m$ such that
 $\{\varphi_u(z): u\in \Gamma_j\}$ is $r$-separated for any $|z|\leqslant \rho$ and $j\in\{1,2,\cdots,m\}$.  Moreover, $m$ depends only on $r$, $\rho$ and $c$.
\end{lem}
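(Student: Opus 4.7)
My plan is to obtain the partition by a graph coloring argument on $\Gamma$. The guiding observation is that for each $u$ the map $\varphi_u$ is a Bergman isometry with $\varphi_u(0)=u$, so that for any $z$ with $|z|\leqslant\rho$ one has
$$\beta(\varphi_u(z),u)=\beta(\varphi_u(z),\varphi_u(0))=\beta(z,0)\leqslant \tanh^{-1}(\rho)=:R.$$
Thus the family $\{u\mapsto\varphi_u(z)\}_{|z|\leqslant\rho}$ displaces each point of $\Gamma$ by at most $R$ in the Bergman metric.

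I then define a ``conflict graph'' $\mathcal{G}$ on $\Gamma$: declare distinct $u,v\in\Gamma$ adjacent whenever there exists some $z$ with $|z|\leqslant\rho$ such that $\beta(\varphi_u(z),\varphi_v(z))<r$. Any proper coloring of $\mathcal{G}$ by $m$ colors yields a partition $\Gamma=\Gamma_1\cup\cdots\cup\Gamma_m$ in which no two distinct elements of any $\Gamma_j$ are adjacent; by definition of $\mathcal{G}$ this means that for every $j$, every $z$ with $|z|\leqslant\rho$, and every pair of distinct $u,v\in\Gamma_j$ one has $\beta(\varphi_u(z),\varphi_v(z))\geqslant r$, so $\{\varphi_u(z):u\in\Gamma_j\}$ is $r$-separated as required.

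The heart of the argument is controlling the degree in $\mathcal{G}$. If $u\sim v$, the triangle inequality together with the displacement bound above gives
$$\beta(u,v)\leqslant \beta(u,\varphi_u(z))+\beta(\varphi_u(z),\varphi_v(z))+\beta(\varphi_v(z),v)<2R+r,$$
so each neighbor of $u$ lies in $\Gamma\cap D(u,2R+r)$. A packing estimate now limits this intersection: by Möbius invariance of $d\lambda$, the disjoint balls $\{D(w,c/2):w\in\Gamma\cap D(u,2R+r)\}$ all share the common positive $\lambda$-mass $\lambda(D(0,c/2))$ and sit inside $D(u,2R+r+c/2)$, which has finite $\lambda$-mass equal to $\lambda(D(0,2R+r+c/2))$. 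Consequently the cardinality of $\Gamma\cap D(u,2R+r)$ is bounded by an integer $N=N(c,\rho,r)$ independent of $u$. This is where the $c$-separation of $\Gamma$ is used quantitatively, and it is the only genuinely delicate step.

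To finish, note that $\Gamma$, being a discrete subset of the separable metric space $\mathbb{B}_n$, is at most countable. Enumerate $\Gamma=\{u_1,u_2,\ldots\}$ and assign colors greedily from the palette $\{1,2,\ldots,N\}$: give $u_k$ the smallest color not yet used by any of its earlier neighbors. Since $u_k$ has at most $N-1$ neighbors in total, a valid choice always exists, producing a proper $N$-coloring of $\mathcal{G}$. Taking $m=N$ yields the desired finite partition with $m$ depending only on $c$, $\rho$, and $r$. Apart from the packing estimate, every step is routine once the maximum degree of $\mathcal{G}$ has been pinned down.
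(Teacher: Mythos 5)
Your proof is correct and follows essentially the same route as the paper: the same triangle--inequality displacement bound and the same volume--packing count (your conflict radius $2\tanh^{-1}(\rho)+r$ is exactly the paper's $R=r+\log\frac{1+\rho}{1-\rho}$) control how many points of $\Gamma$ can interfere with a given one, and a bounded-degree partition is then extracted. The only cosmetic difference is that you build the partition by greedily coloring the (countable) conflict graph, whereas the paper obtains the splitting of $\Gamma$ into $R$-separated pieces via a Zorn's-lemma maximality argument; both are valid.
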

\begin{proof}
Observing that this lemma is essentially comes from \cite[Lemma 2.2 (b)]{Xia2015}, we shall include a proof here for the sake of completeness. Let $R = r + \log\frac{1+\rho}{1-\rho}$ and  $$m = \bigg[\frac{\lambda\big(D(0,R+\frac{c}{2})\big)}{\lambda\big(D(0,\frac{c}{2})\big)}\bigg]+2,$$
where $[x]$ denotes the  greatest integer less than or equal to $x$.

From the proof of \cite[Lemma 2.2 (a)]{Xia2015}, we have that
	$$ \mathrm{card} \{v\in \Gamma: \beta(u,v) \leqslant R\} \leqslant \frac{\lambda\big(D(0,R+\frac{c}{2})\big)}{\lambda\big(D(0,\frac{c}{2})\big)} \leqslant\bigg[\frac{\lambda\big(D(0,R+\frac{c}{2})\big)}{\lambda\big(D(0,\frac{c}{2})\big)}\bigg]+1=m-1$$
 for any $u\in \Gamma$.
	
Next, we will show that there is a finite partition $\Gamma=\Gamma_1\cup \dots \cup\Gamma_m$ such that each $ \Gamma_j$ is $R$-separated.
If this is correct, then  we have that
	\begin{align*}
		\beta(\varphi_u(z) ,\varphi_v(z)) &\geqslant  \beta(u ,v) - \beta(\varphi_u(z) ,u) - \beta(v ,\varphi_v(z))= \beta(u ,v) - 2 \beta(z ,0) \\
		& \geqslant  R - \log\frac{1+|z|}{1-|z|}
		\geqslant  r + \log\frac{1+\rho}{1-\rho}- \log\frac{1+|z|}{1-|z|} \geqslant  r
	\end{align*}
for $\varphi_u(z)$ and $\varphi_v(z)$ with $u,v\in \Gamma_j$ and $|z|\leqslant \rho$. It follows that $\{\varphi_u(z): u\in \Gamma_j\}$ is $r$-separated for any $|z|\leqslant \rho$ and $1\leqslant j \leqslant m$.

Thus, it is sufficient to prove that there exists a  partition $\Gamma=\bigcup_{j=1}^m\Gamma_j$ such that $ \Gamma_j$ is $R$-separated for any $j$. To this end, we need a maximality argument.  Let us consider the set $G_i = \{ \Gamma^i_1,\cdots,\Gamma^i_m \}$, where $\Gamma_1^{i}, \Gamma_2^{i}, \cdots, \Gamma_m^{i}$ are subsets of $\Gamma$.  We say that \emph{$G_i$ is  $R$-separated}  if  $\Gamma_{\ell}^{i}\cap \Gamma_k^{i}=\varnothing$ when $\ell\neq k$ and each $\Gamma^i_{\ell}$ is $R$-separated. For any $G_{i_1}$ and $G_{i_2}$, if $\Gamma^{i_1}_{\ell} \subset \Gamma^{i_2}_{\ell}$ for all $\ell$, then we denote this by $G_{i_1}\prec G_{i_2}$. This is a partial order defined on  $\{G_i\}_i$ (the collection of all $R$-separated sets).

For a chain $\{G_i\}_{i\in \Lambda}$ in $\{G_i\}_{i}$, its maximal element is  given by
	$$\Big\{\bigcup_{i\in \Lambda}\Gamma_1^{i}, \bigcup_{i\in \Lambda}\Gamma_2^{i}, \cdots, \bigcup_{i\in \Lambda}\Gamma_m^{i} \Big\}.$$
According to Zorn's lemma,  $\{G_i\}_i$  has a maximal element and we denote it by $G_0 = \{\Gamma^0_1,\dots,\Gamma^0_m \}$. Then we must have that
	$$\bigcup_{j=1}^m\Gamma^0_j = \Gamma.$$
Otherwise, we can choose  $u\in\Gamma$ such that $u\notin \bigcup_{j=1}^m\Gamma^0_j$. If there exists a $v_j\in \Gamma^0_j$ such that $\beta(v_j, u)\leqslant  R$ for  any $j$, then we  would have
	$$\mathrm{card}\{ v\in \Gamma: \beta(v,u)\leqslant  R\} \geqslant  m,$$
	which is a contradiction. Thus there is an $\ell$ such that $\beta(v, u)> R$ for all $v\in \Gamma_{\ell}.$ Let $\Gamma'_{\ell} = \Gamma^{0}_{\ell} \bigcup \{u\}$ and
	$$G' =\big \{\Gamma^{0}_1, \cdots, \Gamma'_{\ell}, \cdots, \Gamma^{0}_m \big\}.$$
It follows that $G'$ is also $R$-separated, $G_0\prec G' $ and $G'\neq G_0$. But this contradicts the fact that $G_0$ is maximal.
Therefore, we conclude that there exists a partition $\Gamma=\bigcup_{j=1}^m\Gamma_j^0$ such that $ \Gamma_j^0$ is $R$-separated for every $j$.
This finishes the proof of Lemma \ref{separation}.
\end{proof}

In view of the above lemma, we are able to  construct a partition of the  unity on the unit ball. Let $c>0$, $l>0$ and $\Gamma$ be a $c$-separated set. Without loss of generality, we may assume that $0\in \Gamma$. Let $\Phi_0^l$ be a radial nonnegative smooth function on $\mathbb B_n$ such that $0\leqslant \Phi_0^l\leqslant 1$ and
\begin{equation}\label{supportfunction}
 \Phi^l_0(\xi)=\left\{
\begin{aligned}
&1 ,\text{ if }\xi\in D(0,l), \\
&0 ,\text{ if }\xi\notin D(0,2l).\\
\end{aligned}
\right.
\end{equation}
For $\zeta\in\Gamma$, we define $\Phi^l_\zeta$ by
\begin{align}\label{supportfunctionl}
\Phi^l_\zeta(\xi)=\Phi^l_0(\varphi_\zeta(\xi)),\ \ \ \ \xi\in \mathbb B_n.
\end{align}

If $\Gamma$ is a subset of the unit ball $\mathbb B_n$ such that $\{D(\zeta,\frac{c}{2}): \zeta \in \Gamma\}$ are mutually disjoint and $\{D(\zeta,c): \zeta\in \Gamma\}$ covers $\mathbb B_n$,  we  define
\begin{equation}\label{partition}
\Psi_\zeta(\xi):=\frac{\Phi^c_\zeta(\xi)}{\sum\limits_{\zeta\in\Gamma}\Phi^c_\zeta(\xi)}, \ \ \ \ \xi\in \mathbb B_n.
\end{equation}
Clearly,  $\Psi_\zeta(\varphi_{\zeta}(\xi))=\Psi_0(\xi)$, $\sum\limits_{\zeta\in \Gamma}\Psi_\zeta(\xi)=1$ and $\mathrm{supp}(\Psi_\zeta)\subset D(\zeta,2c)$.

Let $\mathcal{I}$ be a  translation invariant closed  subalgebra  of  $\mathrm{BUC}(\mathbb{B}_n)$. Then we define $\mathcal{I'}$ to be the subspace of $L^{\infty}(\mathbb B_n, dv)$ generated by
\begin{equation}\label{generatedspace}
\Big\{\sum_{u\in \Gamma}f(u) \Phi^l_{\varphi_u(z)}: l>0,\  \text{$\Gamma$ is a separated set, $z\in\mathbb{B}_n$ \ and \ $f\in\mathcal{I}$}\Big\}.
\end{equation}
Let $\psi =\varphi_{z}\circ\varphi_{u}$. Since $\psi(\varphi_{u}(z))=0$, we have by \cite[Theorem 2.2.5]{rudin} that there is a rotation $\mathcal R$ of $\mathbb{B}_n$  such that
$$\varphi_{z}\circ\varphi_{u} = \psi =\mathcal R \varphi_{\varphi_{u}(z)}.$$
Since $\Phi^l_0$ is radial, we obtain that
\begin{equation}\label{radial}
\Phi^l_{\varphi_{u}(z)}(\zeta)= \Phi^l_{0}[\varphi_{\varphi_{u}(z)}(\zeta)]=\Phi^l_{0}[\varphi_{z} (\varphi_{u}(\zeta))]=\Phi^l_{z}(\varphi_{u}(\zeta)).
\end{equation}
Let
$$C_0(\mathbb{B}_n)=\Big\{f\in \mathrm{BUC}(\mathbb{B}_n): \lim_{|z|\rightarrow1}f(z)=0\Big\}.$$

With the discussion above, we have the following result.
\begin{lem}\label{lemmaI}
Let $\mathcal{I}$  be a translation invariant closed  subalgebra of $\mathrm{BUC}(\mathbb{B}_n)$. Then the space $\mathcal{I'}$ defined by (\ref{generatedspace}) is contained in $\mathrm{BUC}(\mathbb{B}_n)$. Moreover, $\mathcal{I'}\subset C_0(\mathbb{B}_n)$ if $\mathcal{I}= C_0(\mathbb{B}_n)$.
\end{lem}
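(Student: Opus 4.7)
Since $\mathrm{BUC}(\mathbb{B}_n)$ and $C_0(\mathbb{B}_n)$ are both closed subspaces of $L^{\infty}(\mathbb{B}_n,dv)$, it suffices to verify that every generator
$$g(\xi):=\sum_{u\in\Gamma}f(u)\,\Phi^l_{\varphi_u(z)}(\xi),\qquad \xi\in\mathbb{B}_n,$$
belongs to $\mathrm{BUC}(\mathbb{B}_n)$ whenever $f\in\mathcal{I}\subset\mathrm{BUC}(\mathbb{B}_n)$, and to $C_0(\mathbb{B}_n)$ whenever $f\in C_0(\mathbb{B}_n)$. Here $f,l,\Gamma$ and $z$ are fixed with $\Gamma$ being $c$-separated.

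The plan is first to show that $g$ is, at every point, a finite sum with a \emph{uniform} bound on the number of nonzero terms. Applying Lemma \ref{separation} with $\rho=|z|$ and a choice $R>4l$, we obtain a finite partition $\Gamma=\Gamma_1\sqcup\cdots\sqcup\Gamma_m$ such that $\{\varphi_u(z):u\in\Gamma_j\}$ is $R$-separated for each $j$, with $m$ depending only on $c,l,|z|$. Since $\Phi^l_{\varphi_u(z)}(\xi)=0$ whenever $\beta(\varphi_u(z),\xi)>2l$, and since any two distinct points among $\{\varphi_u(z):u\in\Gamma_j\}$ have $\beta$-distance exceeding $R>4l$, at most one $u\in\Gamma_j$ can contribute to $g(\xi)$. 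Summing over $j$ yields
$$\|g\|_\infty\leqslant m\|f\|_\infty.$$

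Next, I would establish that $g$ is Lipschitz in the Bergman metric. The building block $\Phi^l_0$ is smooth with $\mathrm{supp}\,\Phi^l_0\subset\overline{D(0,2l)}$, a compact subset of $\mathbb{B}_n$; on this compact set the Bergman metric is bi-Lipschitz to the Euclidean metric (since the Bergman metric has smooth density away from $\partial\mathbb{B}_n$), hence $\Phi^l_0$ is $L$-Lipschitz with respect to $\beta$ for some $L>0$. Because $\varphi_\zeta$ is a $\beta$-isometry for every $\zeta$ and $\Phi^l_\zeta=\Phi^l_0\circ\varphi_\zeta$, each $\Phi^l_\zeta$ is $L$-Lipschitz with the \emph{same} constant. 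Now for $\xi_1,\xi_2$ with $\beta(\xi_1,\xi_2)<l$ and each fixed $j$, an argument as above (using $R>4l$) shows that at most two $u\in\Gamma_j$ contribute to the difference $\sum_{u\in\Gamma_j}f(u)[\Phi^l_{\varphi_u(z)}(\xi_1)-\Phi^l_{\varphi_u(z)}(\xi_2)]$, so
$$|g(\xi_1)-g(\xi_2)|\leqslant 2mL\|f\|_\infty\,\beta(\xi_1,\xi_2),$$
which together with the bound on $\|g\|_\infty$ proves $g\in\mathrm{BUC}(\mathbb{B}_n)$.

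For the $C_0$ statement, the key identity is $\beta(u,\varphi_u(z))=\beta(\varphi_u(0),\varphi_u(z))=\beta(0,z)$, which is independent of $u$. Hence any $u$ contributing nontrivially at $\xi$ satisfies $\beta(u,\xi)\leqslant\beta(0,z)+2l$, and therefore $\beta(u,0)\geqslant\beta(\xi,0)-\beta(0,z)-2l\to\infty$ as $|\xi|\to 1$. In particular $|u|\to 1$ for every such $u$, so $f(u)\to 0$ when $f\in C_0(\mathbb{B}_n)$; since at most $m$ terms contribute at $\xi$, this forces $g(\xi)\to 0$ and places $g$ in $C_0(\mathbb{B}_n)$.

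The main technical point I anticipate is the uniform Lipschitz bound for $\Phi^l_\zeta$ in the Bergman metric; the trick is that one never needs to compare the Bergman and Euclidean metrics outside the single compact set $\overline{D(0,2l)}$, because the Möbius-invariance of $\beta$ transfers the Lipschitz constant for $\Phi^l_0$ to every $\Phi^l_\zeta$. Once this is noted, the remaining work reduces to bookkeeping using Lemma \ref{separation}.
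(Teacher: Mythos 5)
Your proof is correct and follows essentially the same route as the paper: partition $\Gamma$ via Lemma \ref{separation} so that the supports of the bumps $\Phi^l_{\varphi_u(z)}$ in each $\Gamma_j$ are disjoint, transfer the regularity of $\Phi^l_0$ to every $\Phi^l_{\varphi_u(z)}$ through the M\"obius invariance of $\beta$, and for the $C_0$ statement use $\beta(u,\varphi_u(z))=\beta(0,z)$ to force $\beta(u,0)\to\infty$ for any contributing index. The only cosmetic differences are that you prove a quantitative Lipschitz bound where the paper settles for uniform continuity (your bi-Lipschitz comparison should be applied on the slightly larger compact set $\overline{D(0,3l)}$ so that both points of a close pair lie in it, a one-line fix), and you allow at most two contributing terms per block where the paper arranges exactly one.
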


\begin{proof}
Let $l>0$, $z\in\mathbb{B}_n$ and  $\Gamma$ be a $c$-separated set. By Lemma \ref{separation}, for any $r>0$ there is
 a finite partition $\Gamma=\Gamma_1\cup \dots \cup\Gamma_m$ such that  $\{\varphi_u(z): u\in \Gamma_j\}$ is $r$-separated. Thus,
$$\sum_{u\in \Gamma}f(u) \Phi^l_{\varphi_u(z)}=\sum_{j=1}^m\sum_{u\in \Gamma_j}f(u) \Phi^l_{\varphi_u(z)}$$
for any $f\in\mathrm{BUC}(\mathbb{B}_n)$.
Now we  choose $r$ big enough such that the support sets of $\Phi^l_{\varphi_u(z)}$ are $c$-separated, i.e.,
$$\beta\Big(\mathrm{supp}\big(\Phi^l_{\varphi_u(z)}\big),\mathrm{supp}\big(\Phi^l_{\varphi_v(z)}\big)\Big)\geqslant c$$
for $u,v\in \Gamma_j$.

Next, we are going to show that  $\sum\limits_{u\in \Gamma_j}f(u) \Phi^l_{\varphi_u(z)}\in \mathrm{BUC}(\mathbb{B}_n)$. Let  $w_1$ and
$w_2$ be two points in $\mathbb{B}_n$ with $\beta(w_1, w_2)<c$. Since the support sets of $\Phi^l_{\varphi_u(z)}$ are $c$-separated for $u\in \Gamma_j$, there exists  $u_0\in \Gamma_j$ such that
$$\Phi^l_{\varphi_{u}(z)}(w_1) =\Phi^l_{\varphi_{u}(z)}(w_2) = 0   $$
for all $u\in \Gamma_j\backslash \{u_0\}.$ It follows  from (\ref{supportfunctionl}) that
\begin{align*}
	&\lim_{\beta(w_1,w_2)\rightarrow 0}\Big|\sum\limits_{u\in \Gamma_j}f(u) \Phi^l_{\varphi_u(z)}(w_1)-
	\sum\limits_{u\in \Gamma_j}f(u) \Phi^l_{\varphi_{u}(z)}(w_2)\Big| \\
	& \leqslant  \|f\|_{\infty}\lim_{\beta(w_1,w_2)\rightarrow 0}\Big| \Phi^l_{\varphi_{u_0}(z)}(w_1)- \Phi^l_{\varphi_{u_0}(z)}(w_2)\Big|\\
	 & =\|f\|_{\infty} \lim_{\beta(w_1,w_2)\rightarrow 0}\Big|\Phi^l_{0}[ \varphi_{\varphi_{u_0}(z)}(w_1)]-\Phi^l_{0}[ \varphi_{\varphi_{u_0}(z)}(w_2)]\Big|\\
	 & =\|f\|_{\infty} \lim_{d\rightarrow 0}\Big|\Phi^l_{0}[ \varphi_{\varphi_{u_0}(z)}(w_1)]-\Phi^l_{0}[\varphi_{\varphi_{u_0}(z)}(w_2)]\Big| \\
	&=0,
\end{align*}
where $$d=\beta\big(\varphi_{\varphi_{u_0}(z)}(w_1),\varphi_{\varphi_{u_0}(z)}(w_2)\big)=\beta(w_1, w_2)$$ and the last equality is due to that $\Phi^l_{0}$ is continuous with compact support. This means that $$\sum\limits_{u\in \Gamma_j}f(u) \Phi^l_{\varphi_u(z)}\in \mathrm{BUC}(\mathbb{B}_n),$$ to obtain
$$\sum_{u\in \Gamma}f(u) \Phi^l_{\varphi_u(z)}\in \mathrm{BUC}(\mathbb{B}_n).$$

Finally, we show that $\sum\limits_{u\in \Gamma_j}f(u) \Phi^l_{\varphi_u(z)}\in C_0(\mathbb{B}_n)$ for each $j\in \{1, 2, \cdots, m\}$ if $ f\in \mathcal{I}= C_0(\mathbb{B}_n)$.  Let  $w$ be a point in $\mathbb{B}_n$. If $w\notin \mathrm{supp}\big(\Phi^l_{\varphi_u(z)}\big)$ for any $u$, then  we have
	$$\sum\limits_{u\in \Gamma_j}f(u) \Phi^l_{\varphi_u(z)}(w) = 0.$$
Otherwise, suppose that $w\in \mathrm{supp}\big(\Phi^l_{\varphi_{u_w}(z)}\big)$ for some $u_w\in \Gamma_j$.  Then we have
\begin{align*}
	\Big|\sum\limits_{u\in \Gamma_j}f(u) \Phi^l_{\varphi_u(z)}(w)\Big| = \Big|f(u_w) \Phi^l_{\varphi_{u_w}(z)}(w)\Big|\leqslant  |f(u_w)|.
\end{align*}

Since $\beta(w,\varphi_{u_w}(z))<l$, we have $\beta(z,u_w)= \beta(0,\varphi_{u_w}(z) )>\beta(w,0)-l$. This yields that
$$\beta(u_w,0) >\beta(w,0)-l-\beta(z,0) .$$
Given an $\epsilon>0$.  Since  $f\in C_0(\mathbb B_n)$, there is a $\delta>0$ such that  $|f(u)|<\epsilon$ whenever $\beta(u,0)>\delta$.
If $\beta(w,0)>\delta+ l+ \beta(z,0)$, then we have that $\beta(u_{w},0)>\delta$ and
$$\Big|\sum\limits_{u\in \Gamma_j}f(u) \Phi^l_{\varphi_u(z)}(w)\Big| \leqslant |f(u_w)| <\epsilon. $$
This implies that
$$\sum_{u\in \Gamma_j}f(u) \Phi^l_{\varphi_u(z)}\in C_0(\mathbb{B}_n),$$
and so is  $\sum_{u\in \Gamma}f(u) \Phi^l_{\varphi_u(z)}$. Thus we conclude that $\mathcal{I'}\subset C_0(\mathbb{B}_n)$, to finish the proof of Lemma \ref{lemmaI}.
\end{proof}

Let $\mathcal{I}$ be a translation invariant closed subalgebra of $\mathrm{BUC}(\mathbb{B}_n)$ and  $\mathcal{I}'$ be defined in (\ref{generatedspace}).
Before showing that an $s$-weakly localized operator belongs to $\mathcal{T}^b_{lin}(\mathcal{I'})$ when its Berezin transform is in  $\mathcal{I}$, the following key proposition is required.

\begin{prop}\label{tensoroperartor}
Let $\mathcal{I}$ be a translation invariant closed subalgebra of $\mathrm{BUC}(\mathbb{B}_n)$ and $\Gamma$ be $c$-separated. For each $f\in\mathcal{I}$ and $z,w\in \Gamma$, we have
$$\sum_{u\in \Gamma}f(u) (U_u k_z)\otimes (U_uk_w) \in \mathcal{T}^b_{lin}(\mathcal{I'})$$ and
 $$\int_{\mathbb B_n} f(u) (U_u k_{z})\otimes (U_uk_w)d\lambda(u)\in \mathcal{T}^b_{lin}(\mathcal{I'}).$$
\end{prop}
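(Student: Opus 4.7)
The plan is to apply the bilinear form machinery of Proposition~\ref{berezin} to both assertions, reducing the proof to the diagonal case $f_1=f_2=k_z$. Define
$$G_1(f_1,f_2)=\sum_{u\in\Gamma}f(u)\,(U_uf_1)\otimes(U_uf_2),\qquad G_2(f_1,f_2)=\int_{\mathbb B_n}f(u)\,(U_uf_1)\otimes(U_uf_2)\,d\lambda(u).$$
By Lemma~\ref{bounded}, both extend to bounded bilinear maps from $H(\mathbb B_n)\times H(\mathbb B_n)$ to $\mathcal B(L^p_a)$, with norm $\lesssim\|f\|_\infty\|f_1\|_\infty\|f_2\|_\infty$. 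Since $\mathcal T^b_{lin}(\mathcal I')$ is closed in operator norm, Proposition~\ref{berezin} reduces the task to showing $G_i(k_z,k_z)\in\mathcal T^b_{lin}(\mathcal I')$ for every $z\in\mathbb B_n$. Because $|\eta(u,z)|=1$, both diagonals collapse into expressions involving only the rank-one operators $k_{\varphi_u(z)}\otimes k_{\varphi_u(z)}$.

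For the sum, fix $z\in\mathbb B_n$ and write $v_u=\varphi_u(z)$. Using Lemma~\ref{separation} I would partition $\Gamma=\Gamma_1\cup\cdots\cup\Gamma_m$ so that each $\{v_u:u\in\Gamma_j\}$ is sufficiently separated; then for all small $l$ the bumps $\Phi^l_{v_u}$ with $u\in\Gamma_j$ have pairwise disjoint supports. Setting $c(l)=\int\Phi^l_0\,d\lambda$, the natural Toeplitz approximant is
$$\phi_l=\frac{1}{c(l)}\sum_{u\in\Gamma}f(u)\,\Phi^l_{\varphi_u(z)}\in\mathcal I',$$
the membership following from~(\ref{generatedspace}). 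The covariance~(\ref{U}) together with~(\ref{radial}) gives $T_{\Phi^l_{v_u}}=U_{v_u}T_{\Phi^l_0}U_{v_u}$, so
$$T_{\phi_l}-\sum_{u\in\Gamma}f(u)\,k_{v_u}\otimes k_{v_u}=\sum_{u\in\Gamma}f(u)\,U_{v_u}R^l\,U_{v_u},$$
where $R^l=\frac{1}{c(l)}\int\Phi^l_0(\zeta)(k_\zeta\otimes k_\zeta-k_0\otimes k_0)\,d\lambda(\zeta)$. Expanding $k_\zeta\otimes k_\zeta-k_0\otimes k_0=(k_\zeta-1)\otimes k_\zeta+1\otimes(k_\zeta-1)$ and applying Lemma~\ref{bounded} on each separated index set $\{v_u:u\in\Gamma_j\}$ yields
$$\bigg\|T_{\phi_l}-\sum_{u\in\Gamma}f(u)\,k_{v_u}\otimes k_{v_u}\bigg\|\lesssim\|f\|_\infty\sup_{\zeta\in D(0,2l)}\|k_\zeta-1\|_\infty\bigl(\|k_\zeta\|_\infty+1\bigr),$$
which tends to $0$ as $l\to 0$ by Lemma~\ref{difference}. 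The closedness of $\mathcal T^b_{lin}(\mathcal I')$ then delivers the first assertion.

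For the integral, I would approximate by a Riemann sum built from the partition of unity~(\ref{partition}). Choose a $\delta$-separated covering set $\Gamma_\delta$ with bumps $\Psi_v^\delta$, and set
$$B_\delta=c_\Psi\sum_{v\in\Gamma_\delta}f(v)\,k_{\varphi_v(z)}\otimes k_{\varphi_v(z)},\qquad c_\Psi=\int\Psi_0^\delta\,d\lambda.$$
By the sum case, $B_\delta\in\mathcal T^b_{lin}(\mathcal I')$. Using $\sum_v\Psi_v^\delta\equiv 1$, the difference
$$G_2(k_z,k_z)-B_\delta=\sum_{v\in\Gamma_\delta}\int\Psi_v^\delta(u)\bigl[f(u)(U_uk_z)\otimes(U_uk_z)-f(v)(U_vk_z)\otimes(U_vk_z)\bigr]d\lambda(u)$$
is estimated in operator norm via Lemma~\ref{bounded} (integral form), after telescoping the bracket and invoking uniform continuity of $f$ together with Lemmas~\ref{distantlemma} and~\ref{difference} to control $\|U_uk_z-U_vk_z\|_\infty$ for $\beta(u,v)\leq 2\delta$. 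The resulting bound vanishes as $\delta\to 0$, placing $G_2(k_z,k_z)$ in $\mathcal T^b_{lin}(\mathcal I')$.

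The principal obstacle is securing the operator-norm convergence $T_{\phi_l}\to\sum_u f(u)k_{v_u}\otimes k_{v_u}$ in the sum step. A naive $L^\infty$-based estimate is too weak: the factor $1/c(l)\sim l^{-2n}$ cannot be offset by the $O(l)$ Lipschitz gain coming from $\|k_\xi-k_u\|_\infty$ on $D(u,2l)$. The resolution is to absorb the factor $1/c(l)$ into the definition of $R^l$ through the covariance $U_{v_u}T_{\Phi^l_0}U_{v_u}=T_{\Phi^l_{v_u}}$, after which the remaining summation over the separated pieces $\{v_u:u\in\Gamma_j\}$ can be tamed uniformly by Lemma~\ref{bounded}.
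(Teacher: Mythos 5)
Your reduction to the diagonal via Proposition \ref{berezin} and your treatment of the discrete sum are sound and essentially reproduce the paper's argument: the covariance identity $T_{\Phi^l_{v_u}}=U_{v_u}T_{\Phi^l_0}U_{v_u}$ is the paper's change of variables $\Phi^l_{\varphi_u(z)}(\zeta)=\Phi^l_z(\varphi_u(\zeta))$ rewritten at the origin instead of at $z$, and your error estimate — Lemma \ref{bounded} on the separated pieces $\{\varphi_u(z):u\in\Gamma_j\}$ combined with $\|k_\zeta-k_0\|_\infty\lesssim l$ on $D(0,2l)$ from Lemma \ref{difference} — is exactly the paper's bound $C_{|z|}\,l$.

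The integral case, however, has a genuine gap. Your error term contains pieces of the form $f(v)\,(U_uk_z-U_vk_z)\otimes(U_uk_z)$ with $\beta(u,v)\leq 2\delta$, and you propose to make $\|U_uk_z-U_vk_z\|_{\infty}$ small using Lemmas \ref{distantlemma} and \ref{difference}. This step fails: $U_uk_z=\eta(u,z)k_{\varphi_u(z)}$ with $\|k_{\varphi_u(z)}\|_{\infty}=(1+|\varphi_u(z)|)^{\frac{n+1}{2}}(1-|\varphi_u(z)|)^{-\frac{n+1}{2}}\rightarrow\infty$ as $u\rightarrow\partial\mathbb{B}_n$, and Lemma \ref{difference} only gives $\|k_a-k_b\|_{\infty}\leq C_r\beta(a,b)$ for $a,b$ confined to a fixed ball $D(0,r)$, with $C_r$ growing in $r$; the points $\varphi_u(z),\varphi_v(z)$ leave every such ball. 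Indeed for real $a,b$ close to $1$ with $\beta(a,b)=\delta$ one has $\|k_a-k_b\|_{\infty}\gtrsim\delta\,(1-a)^{-\frac{n+1}{2}}$, so the quantity you need to be small is actually unbounded. The correct maneuver — and this is the heart of the paper's proof — is to conjugate first: write $k_{\varphi_u(z)}$ as a unimodular multiple of $U_{\varphi_v(z)}k_{a}$ with $a=\varphi_{\varphi_v(z)}(\varphi_u(z))$, observe that $\beta(a,0)\leq\tanh^{-1}\bigl[G(1-|z|)^{-2}\tanh\beta(u,v)\bigr]$ by Lemma \ref{distantlemma} so that $a$ stays near the origin, and only then apply Lemma \ref{difference} to $k_a-k_0$; Lemma \ref{bounded} is then applicable because every rank-one factor carries the same operator $U_{\varphi_v(z)}$. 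A second, quantitative defect of the Riemann-sum scheme is that the constant in the discrete part of Lemma \ref{bounded} depends on the separation constant of the lattice and blows up as $\delta\rightarrow 0$, so refining $\Gamma_\delta$ does not obviously yield a vanishing bound. The paper avoids both problems by fixing the partition of unity at scale $c$ once and for all, writing the integral as $\sum_{j}\int_{D(0,2c)}\Psi_0(u)S_j(u)\,d\lambda(u)$ with $S_j(u)=\sum_{\zeta\in\Gamma_j'}f(\varphi_\zeta(u))\,k_{\varphi_{\varphi_\zeta(u)}(z)}\otimes k_{\varphi_{\varphi_\zeta(u)}(z)}$, and proving that each $S_j$ is norm-continuous on the compact set $D(0,2c)$ (via the conjugation just described) with values in $\mathcal{T}^b_{lin}(\mathcal{I'})$ by the sum case; the integral then lies in that closed subspace. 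You should restructure the second half of your argument along these lines.
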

\begin{proof}
Based on  Lemma $\ref{bounded}$ and Proposition $\ref{berezin}$, we need only to  show that
$$ \sum_{u\in \Gamma}f(u) (U_u k_z)\otimes (U_uk_z) \in \mathcal{T}^b_{lin}(\mathcal{I'}) $$ and
 $$\int_{\mathbb B_n} f(u) (U_u k_z)\otimes (U_uk_z) d\lambda(u)\in \mathcal{T}^b_{lin}(\mathcal{I'})$$
 for every $z\in \Gamma$.

Using $$U_uk_z=\frac{|1-u\cdot\overline{z}|^{n+1}}{(1-u\cdot\overline{z})^{n+1}}k_{\varphi_{u}(z)},$$ we have
$$ \sum_{u\in \Gamma} f(u) (U_u k_z)\otimes (U_uk_z)=\sum_{u\in \Gamma} f(u) k_{\varphi_{u}(z)}\otimes k_{\varphi_{u}(z)} $$
and
$$ \int_{\mathbb B_n} f(u) (U_u k_z)\otimes (U_uk_z) d\lambda(u)=\int_{\mathbb B_n} f(u) k_{\varphi_{u}(z)}\otimes k_{\varphi_{u}(z)}d\lambda(u).$$
By Lemmas  \ref{bounded} and \ref{difference}, we obtain that the following two mappings:
$$z \mapsto \sum_{u\in \Gamma} f(u) (U_u k_z)\otimes (U_uk_z)$$ and
$$ z \mapsto  \int_{\mathbb B_n} f(u) (U_u k_z)\otimes (U_uk_z) d\lambda(u)$$
are both uniformly continuous and uniformly bounded from  $D(0,r)$ to the set of bounded linear operators on $L_a^p$ in the norm topology, where $r>0$.

Let us first show that
$$\sum_{u\in \Gamma} f(u)k_{\varphi_{u}(z)}\otimes k_{\varphi_{u}(z)} \in \mathcal{T}^b_{lin}(\mathcal{I'}),$$
where
$f\in \mathcal{I}$ and  $\Gamma$ is $c$-separated.
To do this, let
$$a_l=\int_{\mathbb B_n}\Phi^l_{z}(\xi)d\lambda(\xi)$$
and
$$ A_l= \int_{\mathbb{B}_n}\sum_{u\in \Gamma}f(u)\frac{\Phi^l_{\varphi_{u}(z)}(\zeta)}{a_l}k_\zeta\otimes k_\zeta d\lambda(\zeta),$$
where the functions of the form  $\Phi^l_{w}$ are defined by (\ref{supportfunction}) and (\ref{supportfunctionl}). Since $A_l$ is  the Toeplitz operator with symbol $\sum\limits_{u\in \Gamma}f(u)\frac{\Phi^l_{\varphi_{u}(z)}(\zeta)}{a_l}$, we have that $A_l\in  \mathcal{T}_{lin}(\mathcal{I'}).$ Furthermore, we have by (\ref{supportfunction}) and (\ref{radial}) that
\begin{align*}
\int_{\mathbb{B}_n}\sum_{u\in \Gamma}f(u)\frac{\Phi^l_{\varphi_{u}(z)}(\zeta)}{a_l}k_\zeta\otimes k_\zeta d\lambda(\zeta)
=&\int_{\mathbb{B}_n}\sum_{u\in \Gamma}f(u)\frac{\Phi^l_{z}(\varphi_u(\zeta))}{a_l}k_\zeta\otimes k_\zeta d\lambda(\zeta)\\
=&\int_{\mathbb{B}_n}\sum_{u\in \Gamma}f(u)\frac{\Phi^l_{z}(\zeta)}{a_l}k_{\varphi_u(\zeta)}\otimes k_{\varphi_u(\zeta)} d\lambda(\zeta)\\
=&\int_{D(z,2l)}\frac{\Phi^l_{z}(\zeta)}{a_l}\sum_{u\in \Gamma}f(u)k_{\varphi_u(\zeta)}\otimes k_{\varphi_u(\zeta)} d\lambda(\zeta).
\end{align*}

We may assume  that  $0<l<1$. Then we have that $z, \zeta\in D\big(0,2+\tanh^{-1}(|z|)\big)$ if $\zeta\in D(z,2l)$. Using  Lemmas \ref{bounded} and \ref{difference} again, we obtain
$$\Big\|\sum_{u\in \Gamma}f(u)k_{\varphi_u(\zeta)}\otimes k_{\varphi_u(\zeta)}-
\sum_{u\in \Gamma}f(u)k_{\varphi_u(z)}\otimes k_{\varphi_u(z)}\Big\|\leqslant C_{|z|}l,$$
where $C_{|z|}$ is a constant depending only on $|z|$. This implies that
\begin{align*}
&\Big\|A_l-\sum_{u\in \Gamma}f(u)k_{\varphi_u(z)}\otimes k_{\varphi_u(z)}\Big\|\\
&=\Big\|\int_{D(z,2l)}\frac{\Phi^l_{z}(\zeta)}{a_l}\Big(\sum_{u\in \Gamma}f(u)k_{\varphi_u(\zeta)}\otimes k_{\varphi_u(\zeta)}-\sum_{u\in \Gamma}f(u)k_{\varphi_u(z)}\otimes k_{\varphi_u(z)}\Big) d\lambda(\zeta)\Big\|\\
& \leqslant  C'_{|z|}l
\end{align*}
for some constant $C'_{|z|}$ depending only on $|z|$. It follows  that
\begin{equation}\label{tensoralgebra}
\sum_{u\in \Gamma}f(u)k_{\varphi_u(z)}\otimes k_{\varphi_u(z)}\in\mathcal{T}^b_{lin}(\mathcal{I'})
\end{equation}
if $f\in \mathcal{I}$ and  $\Gamma$ is $c$-separated.

Next, we are going to show that
$$\int_{\mathbb B_n} f(u) k_{\varphi_{u}(z)}\otimes k_{\varphi_{u}(z)}d\lambda(u)\in\mathcal{T}^b_{lin}(\mathcal{I'}).$$  Let $\Gamma'$ be a subset of $\mathbb B_n$ such that $\{D(\zeta,\frac{c}{2}): \zeta\in \Gamma'\}$ are mutually disjoint and $\{D(\zeta,c):  \zeta\in \Gamma'\}$ covers $\mathbb B_n$. Recalling  that $\Psi_\zeta(\xi)$ is the function constructed  in (\ref{partition}), then we have that
\begin{align*}
&\Big\langle\int_{\mathbb B_n} f(u) k_{\varphi_{u}(z)}\otimes k_{\varphi_{u}(z)}d\lambda(u) h,g\Big\rangle\\
=&\int_{\mathbb B_n} f(u) \langle h ,k_{\varphi_{u}(z)}\rangle \langle k_{\varphi_{u}(z)} ,g\rangle d\lambda(u)\\
=&\int_{\mathbb B_n} \sum_{\zeta\in \Gamma'}\Psi_\zeta(u) f(u) \langle h ,k_{\varphi_{u}(z)}\rangle \langle k_{\varphi_{u}(z)} ,g\rangle d\lambda(u)\\
=&\int_{\mathbb B_n}\Psi_0(u) \sum_{\zeta\in \Gamma'}f(\varphi_\zeta(u)) \big\langle h ,k_{\varphi_{\varphi_\zeta(u)}(z)}\big\rangle \big\langle k_{\varphi_{\varphi_\zeta(u)}(z)} ,g\big\rangle d\lambda(u)
\end{align*}
for any $h\in L^p_a$ and $g\in L^q_a$.

By Lemma \ref{separation}, there is a partition $\Gamma'=\Gamma'_1\cup\cdots\cup \Gamma'_m$ such that
$ \{\varphi_\zeta(u): \zeta\in \Gamma_j'\}$ is $c$-separated for any $j\in \{1, 2, \cdots, m\}$.
Applying  Lemma \ref{separation} to each of the $c$-separated sets $\{\varphi_{\zeta}(u): \zeta\in \Gamma_j'\}$ (which is denoted by $G_j$),
we can get a partition $G_j=\bigcup_{\ell=1}^{N_j}G_{j, \ell}$ such that each $\{\varphi_{\varphi_\zeta(u)}(z): \varphi_\zeta(u)\in G_{j, \ell}\}$ is also $c$-separated. Let $\Gamma_{j,\ell}=\{\zeta: \varphi_\zeta(u)\in G_{j, \ell}\}$. Then we see that $\{\varphi_{\varphi_\zeta(u)}(z): \zeta\in \Gamma_{j, \ell}\}$ is  $c$-separated. This means that $$\Gamma=\bigcup_{j=1}^m \Gamma_j'=\bigcup_{j=1}^m\bigcup_{\ell=1}^{N_j}\Gamma_{j,\ell}.$$
From the above arguments we may assume that $\{\varphi_{\varphi_\zeta(u)}(z): \zeta\in \Gamma_{j}'\}$ is  $c$-separated for $j\in \{1, 2, \cdots, m\}$.

Furthermore, we have
\begin{align*}
&\Big\langle\int_{\mathbb B_n} f(u) k_{\varphi_{u}(z)}\otimes k_{\varphi_{u}(z)}d\lambda(u) h,g\Big\rangle\\
&=\sum_{j=1}^m\int_{\mathbb B_n}\Psi_0(u) \sum_{\zeta\in \Gamma_j'}f(\varphi_\zeta(u)) \big\langle h ,k_{\varphi_{\varphi_\zeta(u)}(z)}\big\rangle \big\langle k_{\varphi_{\varphi_\zeta(u)}(z)}, g\big\rangle d\lambda(u)\\
&=\sum_{j=1}^m\int_{D(0,2c)}\Psi_0(u) \Big\langle\sum_{\zeta\in \Gamma_j'}f(\varphi_\zeta(u)) k_{\varphi_{\varphi_\zeta(u)}(z)}\otimes k_{\varphi_{\varphi_\zeta(u)}(z)} h,g\Big\rangle d\lambda(u),
\end{align*}
where the last equality follows from that $\mathrm{supp}(\Psi_0)\subset D(0, 2c)$. Next,  we will show that the mapping
\begin{align}\label{m2}
u \mapsto  \sum_{\zeta\in \Gamma_j'}f(\varphi_\zeta(u)) k_{\varphi_{\varphi_\zeta(u)}(z)}\otimes k_{\varphi_{\varphi_\zeta(u)}(z)}
\end{align}
is uniformly continuous and uniformly bounded from $D(0, 2c)$ to the set of bounded linear operators on $L_a^p$ in the norm topology.

Let $u$ and $u'$ be two points in $D(0, 2c)$ with $\beta(u, u') < \tanh^{-1}\big(\frac{(1-|z|)^2}{2G}\big)$. Then we have by Lemma \ref{distantlemma} and the  M\"{o}bius invariance of $\beta$ that
\begin{align*}
\beta\big(\varphi_{\varphi_\zeta(u)}(z),\varphi_{\varphi_\zeta(u')}(z)\big)\leqslant \tanh^{-1}\Big[ \frac{G}{(1-|z|)^2 }\tanh\big(\beta(u,u')\big)\Big]
 \end{align*}
 for all $\zeta\in \mathbb B_n$. Letting $a(u,u',\zeta,z)=\varphi_{\varphi_{\varphi_\zeta(u)}(z)}(\varphi_{\varphi_\zeta(u')}(z)),$
 we have
$$\beta\big(a(u,u',\zeta,z),0\big) \leqslant \tanh^{-1}\Big[ \frac{G}{(1-|z|)^2 }\tanh\big(\beta(u,u')\big)\Big]$$
and $$\varphi_{\varphi_{\varphi_\zeta(u)}(z)}\big(a(u,u',\zeta,z)\big)=\varphi_{\varphi_\zeta(u')}(z).$$
Then it follows that
\begin{align*}
&\bigg\| \sum_{\zeta\in \Gamma_j'}f(\varphi_\zeta(u')) k_{\varphi_{\varphi_\zeta(u')}(z)}\otimes k_{\varphi_{\varphi_\zeta(u')}(z)}-\sum_{\zeta\in \Gamma_j'}f(\varphi_\zeta(u)) k_{\varphi_{\varphi_\zeta(u)}(z)}\otimes k_{\varphi_{\varphi_\zeta(u)}(z)}\bigg\|\\
&=\bigg\| \sum_{\zeta\in \Gamma_j'}\Big[f(\varphi_\zeta(u')) U_{\varphi_{\varphi_\zeta(u)}(z)}k_{a(u,u',\zeta,z)}\otimes U_{\varphi_{\varphi_\zeta(u)}(z)} k_{a(u,u',\zeta,z)}
-f(\varphi_\zeta(u))U_{\varphi_{\varphi_\zeta(u)}(z)}k_{0}\otimes U_{\varphi_{\varphi_\zeta(u)}(z)}k_0\Big]\bigg\|\\
&\leqslant \bigg\| \sum_{\zeta\in \Gamma_j'}\Big[[f(\varphi_\zeta(u'))-f(\varphi_\zeta(u))] U_{\varphi_{\varphi_\zeta(u)}(z)}k_{a(u,u',\zeta,z)}\otimes U_{\varphi_{\varphi_\zeta(u)}(z)} k_{a(u,u',\zeta,z)}\Big]\bigg\|\\
&\ \ +\bigg\| \sum_{\zeta\in \Gamma_j'}\Big[f(\varphi_\zeta(u))U_{\varphi_{\varphi_\zeta(u)}(z)}k_{a(u,u',\zeta,z)}\otimes U_{\varphi_{\varphi_\zeta(u)}(z)}  (k_{a(u,u',\zeta,z)}-k_0)\Big]\bigg\|\\
&\ \ +\bigg\| \sum_{\zeta\in \Gamma_j'}\Big[f(\varphi_\zeta(u))U_{\varphi_{\varphi_\zeta(u)}(z)}(k_{a(u,u',\zeta,z)}-k_0)\otimes U_{\varphi_{\varphi_\zeta(u)}(z)}k_0\Big]\bigg\|.
\end{align*}
As  $\{\varphi_{\varphi_\zeta(u)}(z): \zeta\in \Gamma_{j}'\}$ is  $c$-separated, we have that
\begin{align}\label{2}
\begin{split}
	&\bigg\|\sum_{\zeta\in \Gamma_j'}f(\varphi_\zeta(u))U_{\varphi_{\varphi_\zeta(u)}(z)}(k_{a(u,u',\zeta,z)}-k_0)\otimes U_{\varphi_{\varphi_\zeta(u)}(z)}k_0\bigg\|\\
	&= \sup_{g_1\in L_a^p, \|g_1\|_p = 1 \atop { g_2\in L_a^q, \|g_2\|_q=1}}\Big| \Big\langle\sum_{\zeta\in \Gamma_j'}f(\varphi_\zeta(u))U_{\varphi_{\varphi_\zeta(u)}(z)}(k_{a(u,u',\zeta,z)}-k_0)\otimes U_{\varphi_{\varphi_\zeta(u)}(z)}k_0 g_1, g_2\Big\rangle\Big|\\
	 & \leqslant  \sup_{g_1\in L_a^p, \|g_1\|_p = 1 \atop { g_2\in L_a^q, \|g_2\|_q=1}}\sum_{\zeta\in \Gamma_j'}|f(\varphi_\zeta(u))|~
	\Big|\Big\langle U_{\varphi_{\varphi_\zeta(u)}(z)}(k_{a(u,u',\zeta,z)}-k_0), g_2 \Big\rangle\Big|~
	\Big|\Big\langle g_1, U_{\varphi_{\varphi_\zeta(u)}(z)}k_0\Big\rangle\Big|\\
	& \lesssim \|f\|_{\infty}\sup_{\zeta\in \mathbb B_n} \|k_{a(u,u',\zeta,z)}-k_0\|_\infty \lesssim \|f\|_\infty \sup_{\zeta\in \mathbb B_n} \beta\big(a(u,u',\zeta,z),0\big)\\
&\leqslant  \|f\|_{\infty}\tanh^{-1}\Big[\frac{G}{(1-|z|)^2 }\tanh\big(\beta(u,u')\big)\Big],
\end{split}
\end{align}
where the second inequality is due to Lemma \ref{bounded} (note that $|f(\varphi_\zeta(u))|$ is  dominated by $\|f\|_\infty$, so Lemma \ref{bounded} still can be applied here),  and  the third inequality  comes from Lemma \ref{difference}.
Moreover, we have that
\begin{align*}
|a(u,u',\zeta,z)| &= \rho\big(a(u,u',\zeta,z), 0\big)
= \tanh\big[\beta(a(u,u',\zeta,z), 0)\big] \\
&=\frac{G}{(1-|z|)^2 }\tanh\big(\beta(u,u')\big)<\frac{1}{2},
\end{align*}
since $\beta(u,u')< \tanh^{-1}\big(\frac{(1-|z|)^2}{2G}\big)$. This gives $\|k_{a(u,u',\zeta,z)}\|_{\infty}\lesssim 1. $
Then using the same method as used in (\ref{2}), we obtain that
\begin{align*}
	&\bigg\| \sum_{\zeta\in \Gamma_j'}\Big[[f(\varphi_\zeta(u'))-f(\varphi_\zeta(u))] U_{\varphi_{\varphi_\zeta(u)}(z)}k_{a(u,u',\zeta,z)}\otimes U_{\varphi_{\varphi_\zeta(u)}(z)} k_{a(u,u',\zeta,z)}\bigg\|\\
	&\lesssim \sup_{\zeta\in \mathbb B_n}|f(\varphi_\zeta(u))-f(\varphi_\zeta(u'))|
\end{align*}
and
\begin{align*}
	&\bigg\| \sum_{\zeta\in \Gamma_j'} \Big[f(\varphi_\zeta(u))U_{\varphi_{\varphi_\zeta(u)}(z)}k_{a(u,u',\zeta,z)}\otimes U_{\varphi_{\varphi_\zeta(u)}(z)}  (k_{a(u,u',\zeta,z)}-k_0)\Big]\bigg\|\\
	&\lesssim  \|f\|_{\infty}\tanh^{-1}\Big[ \frac{G}{(1-|z|)^2 }\tanh\big(\beta(u,u')\big)\Big].
\end{align*}
Since $f\in \mathrm{BUC}(\mathbb{B}_n)$ and $\beta$ is M\"{o}bius invariant, we deduce that the mapping defined in (\ref{m2}) is uniformly continuous. Similarly, we can show that this mapping is also uniformly bounded.

Finally,  from
\begin{align*}
\int_{\mathbb B_n} f(u) k_{\varphi_{u}(z)}\otimes k_{\varphi_{u}(z)}d\lambda(u)=\sum_{j=1}^m \int_{D(0, 2c)}\Psi_0(u)\sum_{\zeta\in \Gamma_j'}f(\varphi_\zeta(u)) k_{\varphi_{\varphi_\zeta(u)}(z)}\otimes k_{\varphi_{\varphi_\zeta(u)}(z)}d\lambda(u),
\end{align*}
we conclude that the above integral is convergent in the norm topology. Since $\{\varphi_{\zeta}(u): \zeta\in \Gamma_j'\}$ is $c$-separated for any $j\in \{1, 2, \cdots, m\}$, it follows from (\ref{tensoralgebra}) that
$$\int_{\mathbb B_n} f(u) k_{\varphi_{u}(z)}\otimes k_{\varphi_{u}(z)}d\lambda(u)\in \mathcal{T}^b_{lin}(\mathcal{I'}).$$
This completes the proof of Proposition \ref{tensoroperartor}.
\end{proof}

\begin{thm}\label{theorembergman}
Let $\mathcal{I}$ be a translation invariant closed subalgebra of $\mathrm{BUC}(\mathbb{B}_n)$ and $\mathcal{I}'$ be defined in (\ref{generatedspace}). Let $T$  be a bounded linear operator on $L_a^p$. If $T\in \mathcal{A}^{p}_s$ and $\widetilde{T}\in \mathcal{I}$, then
$ T\in \mathcal{T}^b_{lin}(\mathcal{I'}).$
\end{thm}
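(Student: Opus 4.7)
The plan is to imitate the Fock-space argument from Section~\ref{Fock}, where Proposition~\ref{fockweak} followed by combining the integral representation of Theorem~\ref{fockintrep} with Corollary~\ref{fockbi}. Here the analogous ingredients are already in place: Theorem~\ref{integralrepresantation} provides the integral representation of $T\in\mathcal{A}^{p}_s$, Corollary~\ref{biberezin} controls the ``matrix coefficients'' $\langle TU_{(\cdot)}k_{z},U_{(\cdot)}k_{w}\rangle$, and Proposition~\ref{tensoroperartor} places the relevant rank-one integral operators in $\mathcal{T}^b_{lin}(\mathcal{I}')$. So the strategy is simply to stitch these three results together and pass to the limit, using that $\mathcal{T}^b_{lin}(\mathcal{I}')$ is norm-closed.

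First, since $T\in\mathcal{A}^{p}_{s}$, Theorem~\ref{integralrepresantation} gives
\[
T=\lim_{r\to\infty}\int_{D(0,r)}\!\!\int_{\mathbb B_n}\langle TU_uk_0,U_uk_v\rangle\,(U_uk_v)\otimes(U_uk_0)\,d\lambda(u)\,d\lambda(v)
\]
in the operator norm. Because $\mathcal{T}^b_{lin}(\mathcal{I}')$ is closed, it suffices to prove that for every $r>0$ the inner-over-$u$ integral,
\[
A(v):=\int_{\mathbb B_n}\langle TU_uk_0,U_uk_v\rangle\,(U_uk_v)\otimes(U_uk_0)\,d\lambda(u),
\]
satisfies $\int_{D(0,r)}A(v)\,d\lambda(v)\in\mathcal{T}^b_{lin}(\mathcal{I}')$.

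For each fixed $v\in\mathbb B_n$, set $f_v(u):=\langle TU_uk_0,U_uk_v\rangle$. The hypothesis $\widetilde{T}\in\mathcal{I}$ together with Corollary~\ref{biberezin} yields $f_v\in\mathcal{I}$. Applying Proposition~\ref{tensoroperartor} (with $z$ replaced by $v$ and $w$ by $0$) gives $A(v)\in\mathcal{T}^b_{lin}(\mathcal{I}')$ for every $v$. Next, Theorem~\ref{integralrepresantation} asserts that $v\mapsto A(v)$ is uniformly continuous and uniformly bounded from $D(0,r)$ into $\mathcal{B}(L_a^p)$, so the vector-valued integral $\int_{D(0,r)}A(v)\,d\lambda(v)$ is a norm limit of Riemann sums $\sum_{j}\lambda(E_j)\,A(v_j)$. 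Each such Riemann sum is a finite linear combination of elements of $\mathcal{T}^b_{lin}(\mathcal{I}')$, hence lies in $\mathcal{T}^b_{lin}(\mathcal{I}')$, and norm-closedness of the latter delivers $\int_{D(0,r)}A(v)\,d\lambda(v)\in\mathcal{T}^b_{lin}(\mathcal{I}')$. Letting $r\to\infty$ completes the proof.

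The main potential obstacle is really already absorbed into Proposition~\ref{tensoroperartor}; apart from that, I only need to justify that the vector-valued integral on $D(0,r)$ can be approximated in norm by Riemann sums, which is standard given the uniform continuity and boundedness from Theorem~\ref{integralrepresantation}. One small technical point worth checking is that the bound on $\|A(v)\|$ is uniform over all $v\in D(0,r)$ (not just continuous), so that the Riemann sum approximation controls the full integral; this is exactly what the ``uniformly bounded'' clause of Theorem~\ref{integralrepresantation} supplies.
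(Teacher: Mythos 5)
Your proposal is correct and follows essentially the same route as the paper: the paper's proof likewise combines Corollary \ref{biberezin}, Proposition \ref{tensoroperartor}, and Theorem \ref{integralrepresantation} in exactly this order, merely leaving the Riemann-sum approximation of the outer integral (which you spell out, correctly relying on the uniform continuity and boundedness clause) implicit.
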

\begin{proof}
From our assumption  and Corollary \ref{biberezin}, we have that  $\big\langle TU_{(\cdot)}k_w,U_{(\cdot)}k_z \big\rangle\in \mathcal{I}$ for all $z,w\in \mathbb B_n$. By Proposition \ref{tensoroperartor}, we obtain
$$\int_{\mathbb B_n} \langle TU_uk_0,U_u k_{v}\rangle (U_u k_{v})\otimes (U_uk_0) d\lambda(u) \in \mathcal{T}^b_{lin}(\mathcal{I'}).$$
Therefore,  we conclude by Theorem \ref{integralrepresantation} that
$$T=\lim_{r\rightarrow\infty} \int_{D(0,r)}\int_{\mathbb B_n} \langle TU_uk_0,U_u k_{v}\rangle (U_u k_{v})\otimes (U_uk_0) d\lambda(u) d\lambda(v)\in\mathcal{T}^b_{lin}(\mathcal{I'}).$$
This completes the proof of Theorem \ref{theorembergman}.
\end{proof}

We next show that the Berezin transform of the finite product of Toeplitz operators with translation invariant symbols is also translation invariant on $\mathbb B_n$.
\begin{prop}\label{productberezin}
Let $\mathcal{I}$ be a translation invariant closed subalgebra of $\mathrm{BUC}(\mathbb{B}_n)$. For any $\varphi_1, \varphi_2, \cdots, \varphi_m$ in $\mathcal{I}$, let
 $A=T_{\varphi_1}T_{\varphi_2}\cdots T_{\varphi_m}$. Then we have
$$\widetilde{A}\in \mathcal{I}.$$
\end{prop}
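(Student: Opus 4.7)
The plan is to mimic the proof of Proposition \ref{fockproduct} on $F^p_t$ by induction on $m$, translating every use of the Weyl operators $W_z$ and the shift $\alpha_z$ into corresponding statements about $U_z$ and $\tau_z$. The base case $m=1$ is a direct computation. Using $U_z 1 = k_z$ and identity (\ref{U}) we get
\[
\widetilde{T_{\varphi_1}}(z) = \langle T_{\varphi_1} U_z 1, U_z 1\rangle = \langle T_{\varphi_1\circ \varphi_z} 1,1\rangle = \int_{\mathbb B_n}\varphi_1(\varphi_z(\xi))\,dv(\xi).
\]
By the definition (\ref{tau}) the integrand is $(\tau_\xi \varphi_1)(z)$, and translation invariance of $\mathcal I$ gives $\tau_\xi \varphi_1 \in \mathcal I$ for every $\xi$. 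Using Lemma \ref{distantlemma} to control $\beta(\varphi_\xi(z),\varphi_{\xi'}(z))$ locally, the map $\xi \mapsto \tau_\xi \varphi_1$ is uniformly bounded in $\mathcal I$ and continuous on compact subsets of $\mathbb B_n$; hence the $\mathcal I$-valued integral (which equals $\widetilde{T_{\varphi_1}}$) lies in the closed subspace $\mathcal I$.

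For the inductive step, assume the conclusion for products of length $\leq k-1$ and write $A = T_{\varphi_1}T_{\varphi_2}\cdots T_{\varphi_k}$. I would insert the resolution of the identity $I = \int_{\mathbb B_n}(k_w\otimes k_w)\,d\lambda(w)$ between $T_{\varphi_1}$ and the remaining product, change variables $w = \varphi_z(v)$ (using M\"{o}bius invariance of $d\lambda$), and use $U_z k_v = \eta(z,v)k_{\varphi_z(v)}$ with $|\eta(z,v)|=1$ to express
\[
\widetilde{A}(z) = \int_{\mathbb B_n}\big\langle T_{\varphi_2}\cdots T_{\varphi_k}U_z 1,\, U_z k_v\big\rangle\,\big\langle T_{\varphi_1} U_z k_v,\, U_z 1\big\rangle\,d\lambda(v),
\]
which is the exact analogue of (\ref{product}). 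By the induction hypothesis both $\widetilde{T_{\varphi_1}}$ and $\widetilde{T_{\varphi_2}\cdots T_{\varphi_k}}$ lie in $\mathcal I$; Corollary \ref{biberezin} then places both inner products, viewed as functions of $z$, in $\mathcal I$, and their product is in $\mathcal I$ because $\mathcal I$ is a closed subalgebra. To pass from the integrand to the integral, I would show that $v \mapsto$ (the product inside the integral) is continuous on compact sets from $\mathbb B_n$ to $(\mathcal I, \|\cdot\|_\infty)$ and is dominated by a $d\lambda$-integrable scalar majorant, whence Riemann-sum approximation together with closedness of $\mathcal I$ forces $\widetilde{A}\in \mathcal I$.

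The step I expect to be the main obstacle is the integrable majorant in the last display. Unlike the Fock setting, where the explicit Gaussian factor $e^{-|w|^2/(4t)}$ in Proposition \ref{fockproduct} gives automatic decay, here one must control the integrand against the measure $d\lambda$, which blows up near $\partial\mathbb B_n$. The natural remedy is to use $U_z^* T_{\varphi_j} U_z = T_{\varphi_j\circ\varphi_z}$ to rewrite
\[
\big|\langle T_{\varphi_1}U_z k_v,U_z 1\rangle\big| = \big|\langle T_{\varphi_1}k_{\varphi_z(v)},k_z\rangle\big|,
\]
and the analogous identity for the other factor, together with the fact that a product of bounded Toeplitz operators lies in $\mathcal A^p_s$ (so Definition \ref{s-weakly} applies). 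The two resulting $s$-weakly localized kernels supply the needed uniform-in-$z$ integrable bound by the change of variables $v\mapsto \varphi_z(v)$. Once this decay estimate is in hand, the approximation argument goes through verbatim, completing the induction and hence the proof.
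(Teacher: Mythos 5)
Your skeleton is exactly the paper's: induction on $m$, the base case via $\widetilde{T_{\varphi_1}}(z)=\int_{\mathbb B_n}\varphi_1\circ\varphi_z\,dv$, and the inductive step via the resolution of the identity, the M\"obius change of variables to reach the exact analogue of (\ref{product}), Corollary \ref{biberezin} plus the subalgebra property for the integrand, and then a continuity-plus-decay argument to keep the $\mathcal I$-valued integral inside the closed subspace $\mathcal I$. One small slip in the base case: the continuity you need is in the integration variable, and the two evaluation points of $\varphi_1$ are $\varphi_z(w)$ and $\varphi_z(w')$, whose Bergman distance equals $\beta(w,w')$ \emph{exactly} by M\"obius invariance of $\beta$; Lemma \ref{distantlemma}, which bounds $\beta(\varphi_u(z),\varphi_v(z))$ for varying subscripts and only under a smallness hypothesis, is both the wrong configuration and unnecessary here.

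The genuine gap is in the step you flag as the crux. You want a $d\lambda$-integrable scalar majorant $M(v)\geqslant\sup_z|F_v(z)|$, and you propose to extract it from the $s$-weak localization of $T_{\varphi_1}$ and $T_{\varphi_2}\cdots T_{\varphi_k}$. But every condition in Definition \ref{s-weakly} has the form $\sup_z\int(\cdots)\,d\lambda(w)<\infty$ (or $\to 0$), and after the change of variables $w=\varphi_z(v)$ this still only gives control of $\sup_z\int(\cdots)\,d\lambda(v)$; it cannot be upgraded to $\int\sup_z(\cdots)\,d\lambda(v)$, i.e.\ it does not produce a single dominating function independent of $z$. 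Moreover the localization conditions carry the weights $\|K_z\|_2^{a}/\|K_w\|_2^{a}$, while your integrand is the unweighted product of two matrix coefficients, so even the $\sup_z\int$ form of the tail estimate would require a further H\"older-type splitting of the weight between the two factors, which you do not supply. There are two repairs: (i) note that a true majorant is not needed --- uniform-in-$z$ smallness of $\int_{\mathbb B_n\setminus D(0,r)}|F_v(z)|\,d\lambda(v)$ as $r\to\infty$ already shows $\widetilde A$ is the uniform limit of $\int_{D(0,r)}F_v\,d\lambda(v)\in\mathcal I$ --- and then carry out the weighted splitting; or (ii) do what the paper actually does, namely bypass localization entirely and get a pointwise bound by writing $F_v(z)=\langle T_{\varphi_2\circ\varphi_z}\cdots T_{\varphi_k\circ\varphi_z}1,k_v\rangle\langle k_v,T_{\overline{\varphi_1}\circ\varphi_z}1\rangle$ and applying H\"older's inequality with a suitable pair of conjugate exponents, which yields $\sup_z|F_v(z)|\lesssim\|k_v\|_t^2\lesssim(1-|v|^2)^{c}$ with $c>n$, a bound that is independent of $z$ and $d\lambda$-integrable. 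Without one of these, the passage from the integrand to the integral is not justified.
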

\begin{proof}
We prove this proposition by induction.  First, let us consider the case of $m=1$. Recall that
$$\widetilde{T_{\varphi_1}}(z)=\langle T_{\varphi_1}k_z,k_z \rangle=\int_{\mathbb B_n} \varphi_{1}\circ \varphi_z(w)dv(w).$$
Since $\varphi_1\in \mathrm{BUC}(\mathbb{B}_n)$ and
$$\beta(\varphi_z(w),\varphi_z(w'))=\beta(w,w'),$$
we obtain that  the mapping
$$w\mapsto \varphi_{1}\circ \varphi_{(\cdot)}(w)$$
is uniformly continuous on $\mathbb B_n$.  Note that $\varphi_{1}\circ \varphi_{(\cdot)}(w)\in \mathcal{I}$ as $\varphi_1\in \mathcal {I}$. Moreover, since
$$\big\|\varphi_1\circ\varphi_{(\cdot)}(w)\big\|_{\mathcal{I}}= \sup_{z\in \mathbb B_n}|\varphi_1\circ \varphi_{z}(w)|= \|\varphi_1\|_{\infty}< \infty,$$
we have that  the integral
 $$\int_{\mathbb B_n} \varphi_{1}\circ \varphi_{(\cdot)}(w)dv(w)$$ is convergent in the norm topology of $\mathcal{I}$. It follows that
$$\widetilde{T_{\varphi_1}}(\cdot)=\int_{\mathbb B_n} \varphi_{1}\circ \varphi_{(\cdot)}(w)dv(w)\in \mathcal{I}.$$
This shows that our conclusion holds for $m=1$.

Now we suppose that the conclusion holds for $m=k-1$. Let us consider  the case of $m=k$.
\begin{align*}
\langle T_{\varphi_1}T_{\varphi_2}\cdots T_{\varphi_k} k_z,k_z \rangle&=\langle T_{\varphi_2}\cdots T_{\varphi_k} k_z,T_{\overline{\varphi_1}}k_z \rangle\\
&=\int_{\mathbb B_n} \langle T_{\varphi_2}\cdots T_{\varphi_k} k_z,k_w\rangle \langle k_w, T_{\overline{\varphi_1}}k_z\rangle d\lambda(w)\\
&=\int_{\mathbb B_n} \langle T_{\varphi_2}\cdots T_{\varphi_k} k_z,k_{\varphi_z(w)}\rangle \langle k_{\varphi_z(w)}, T_{\overline{\varphi_1}}k_z\rangle d\lambda(w)\\
&=\int_{\mathbb B_n} \langle T_{\varphi_2}\cdots T_{\varphi_k} U_zk_0,U_zk_{w}\rangle \langle T_{\varphi_1}U_zk_{w},U_zk_0\rangle d\lambda(w).
\end{align*}
By our  hypothesis, $\big\langle T_{\varphi_2}\cdots T_{\varphi_k}k_{(\cdot)},k_{(\cdot)}\big\rangle\in \mathcal{I}$ and $\tau_{w}\big\langle T_{\varphi_2}\cdots T_{\varphi_k}k_{(\cdot)},k_{(\cdot)}\big\rangle\in \mathcal{I}$.  It follows that
$$\big\langle T_{\varphi_2}\cdots T_{\varphi_k} U_{(\cdot)} k_w,U_{(\cdot)}k_{w}\big\rangle=\big\langle T_{\varphi_2}\cdots T_{\varphi_k}k_{\varphi_{(\cdot)}(w)},k_{\varphi_{(\cdot)}(w)}\big\rangle\in \mathcal{I}, \ \ \ \  w\in \mathbb B_n,$$
since $\mathcal{I}$ is translation invariant. Using Corollary \ref{biberezin}, we obtain
$$\big\langle T_{\varphi_2}\cdots T_{\varphi_k} U_{(\cdot)} k_0,U_{(\cdot)}k_{w}\big\rangle\in \mathcal{I}.$$
Similarly, we have $$\big\langle T_{\varphi_1}U_{(\cdot)}k_{w},U_{(\cdot)}k_0\big\rangle\in \mathcal{I},$$
to obtain
$$\big\langle T_{\varphi_2}\cdots T_{\varphi_k} U_{(\cdot)} k_0,U_{(\cdot)}k_{w}\big\rangle\big\langle T_{\varphi_1}U_{(\cdot)}k_{w},U_{(\cdot)}k_0\big\rangle\in \mathcal{I}.$$
It follows from  Lemmas \ref{bounded} and \ref{difference}  that the mapping
$$w \mapsto \big\langle T_{\varphi_2}\cdots T_{\varphi_k} U_{(\cdot)} k_0,U_{(\cdot)}k_{w}\big\rangle\big\langle T_{\varphi_1}U_{(\cdot)}k_{w},U_{(\cdot)}k_0\big\rangle$$
is uniformly continuous from each  $D(0, r)$ to $\mathcal{I}$.

Choosing $t>2>t'>1$ such that $\frac{1}{t}+\frac{1}{t'}=1$, then we  have by (\ref{U}) that
\begin{align*}
&\Big\|\langle T_{\varphi_2}\cdots T_{\varphi_k} U_{(\cdot)} k_0,U_{(\cdot)}k_{w}\rangle\langle T_{\varphi_1}U_{(\cdot)}k_{w},U_{(\cdot)}k_0\rangle\Big\|\\
&=\sup_{z\in \mathbb B_n}\Big|\langle T_{\varphi_2}\cdots T_{\varphi_k} U_{z} k_0,U_{z}k_{w}\rangle\langle T_{\varphi_1}U_{z}k_{w},U_{z}k_0\rangle\Big|\\
&=\sup_{z\in \mathbb B_n}\Big|\langle T_{\varphi_2\circ\varphi_z}\cdots T_{\varphi_k\circ\varphi_z} k_0, k_{w}\rangle
\langle k_{w},T_{\overline{\varphi_1}\circ\varphi_z}k_0\rangle\Big|\\
& \leqslant  \|T_{\varphi_2\circ\varphi_z}\cdots T_{\varphi_k\circ\varphi_z}1\|_{t'}\|k_w\|_t \|k_w\|_t\|T_{\overline{\varphi_1}\circ\varphi_z}1\|_{t'}\\
& \lesssim (1-|w|^2)^{(1-\frac{2}{t})(n+1)},
\end{align*}
where the  function $(1-|w|^2)^{(1-\frac{2}{t})(n+1)}$ is integrable with respect to the measure $d\lambda$ on $\mathbb B_n$. Therefore,
$$\big\langle T_{\varphi_1}T_{\varphi_2}\cdots T_{\varphi_k} k_{(\cdot)},k_{(\cdot)} \big\rangle=\int_{\mathbb B_n} \big\langle T_{\varphi_2}\cdots T_{\varphi_k} U_{(\cdot)}k_0,U_{(\cdot)}k_{w}\big\rangle \big\langle T_{\varphi_1}U_{(\cdot)}k_{w},U_{(\cdot)}k_0\big\rangle d\lambda(w)$$
is also in  $\mathcal{I}$, as desired. This completes the proof of  Proposition \ref{productberezin}.
\end{proof}

Given a translation invariant closed subalgebra  $\mathcal{I}\subset \mathrm{BUC}(\mathbb{B}_n)$, we recall that  $\mathcal{I'}$ is defined by (\ref{generatedspace}).  Then combining Theorem \ref{theorembergman} with Proposition \ref{productberezin}  gives that the Toeplitz algebra $\mathcal{T}^b(\mathcal{I})$ is contained in the closed linear space $\mathcal{T}^b_{lin}(\mathcal{I'})$.

\begin{prop}\label{theorem2}
Suppose that $\mathcal{I}$ is a translation invariant closed subalgebra of $\mathrm{BUC}(\mathbb{B}_n)$. Then
$$\mathcal{T}^b(\mathcal{I})\subset \mathcal{T}^b_{lin}(\mathcal{I'})$$
holds on the $p$-Bergman space $L_a^p$.
\end{prop}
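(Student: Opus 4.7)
The plan is to reduce the inclusion to a single-product statement and then apply Theorem \ref{theorembergman}. By definition, $\mathcal{T}^b(\mathcal{I})$ is the norm closure of the linear span of all finite products
$$A = T_{\varphi_1}T_{\varphi_2}\cdots T_{\varphi_m}, \qquad \varphi_1,\dots,\varphi_m\in\mathcal{I}.$$
Since $\mathcal{T}^b_{lin}(\mathcal{I}')$ is a norm-closed linear space, it suffices to show that each such product $A$ lies in $\mathcal{T}^b_{lin}(\mathcal{I}')$; the full inclusion then follows by taking linear combinations and limits.

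For a fixed product $A$, my plan is to verify the two hypotheses of Theorem \ref{theorembergman}. First, $\widetilde{A}\in\mathcal{I}$ is precisely the content of Proposition \ref{productberezin} and requires no additional argument. Second, I need $A\in\mathcal{A}^p_s$. This reduces to two facts about $s$-weakly localized operators on $L_a^p$: (a) every Toeplitz operator $T_\varphi$ with $\varphi\in L^\infty(\mathbb{B}_n, dv)$ belongs to $\mathcal{A}^p_s$, which follows from the standard off-diagonal estimates for $\langle T_\varphi k_z,k_w\rangle$ in terms of the Bergman kernel; and (b) $\mathcal{A}^p_s$ is closed under composition, so any finite product of Toeplitz operators with bounded symbols is again in $\mathcal{A}^p_s$. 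Both facts are available from \cite{wick2014}. Combining (a), (b), Proposition \ref{productberezin}, and Theorem \ref{theorembergman} places $A$ inside $\mathcal{T}^b_{lin}(\mathcal{I}')$, finishing the reduction.

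The only step demanding any real care is the closure of $\mathcal{A}^p_s$ under composition. If this is not quoted directly from earlier in the paper, I would insert a short Schur-test lemma: writing the composed reproducing-kernel matrix element via
$$\langle TSk_z,k_w\rangle \;=\; \int_{\mathbb{B}_n}\langle Sk_z,k_\xi\rangle\,\langle Tk_\xi,k_w\rangle\,d\lambda(\xi),$$
one bounds the relevant weighted integral using the individual weak-localization estimates for $T$ and $S$ together with the Schur weight $\|K_{\cdot}\|_2^{1-2s/[q(n+1)]}$. The control over the tail as $r\to\infty$ is handled by splitting the $\xi$-integration into $D(z,r)$ and its complement and applying the definition of $\mathcal{A}^p_s$ to each piece. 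This is routine but is the one technical point where the parameter $s$ and the exponent pairs $(p,q)$ must be tracked carefully.
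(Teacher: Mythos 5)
Your proposal is correct and follows essentially the same route as the paper: reduce to a single finite product $A=T_{\varphi_1}\cdots T_{\varphi_m}$, invoke Proposition \ref{productberezin} for $\widetilde{A}\in\mathcal{I}$, cite Propositions 2.2 and 2.3 of \cite{wick2014} for $A\in\mathcal{A}^p_s$ (membership of bounded-symbol Toeplitz operators and closure under products), and conclude via Theorem \ref{theorembergman}. The Schur-test sketch for the algebra property of $\mathcal{A}^p_s$ is unnecessary here, since the paper simply quotes that fact from \cite{wick2014}.
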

\begin{proof}
Let $A=T_{\varphi_1}T_{\varphi_2}\cdots T_{\varphi_m}\in \mathcal{T}^b(\mathcal{I})$ with $\varphi_1, \varphi_2, \cdots, \varphi_m\in\mathcal{I}$. It follows from  Proposition \ref{productberezin} that $\widetilde{A}\in \mathcal{I}$. By Propositions 2.2 and 2.3 in \cite{wick2014}, we deduce that $A\in \mathcal{A}^{p}_s.$  Now we conclude by  Theorem \ref{theorembergman} that $A\in \mathcal{T}^b_{lin}(\mathcal{I'})$.  This finishes the proof of Proposition \ref{theorem2}.
\end{proof}

The next theorem generalizes the corresponding result  obtained by Xia \cite[Theorem 1.5]{Xia2015} for the case of $p=2$.
\begin{thm}\label{pbergthm}
On the $p$-Bergman space $L_a^p$, we have that
$$ \mathcal{T}^b_{lin}[\mathrm{BUC}(\mathbb{B}_n)]=\mathcal{T}^b[\mathrm{BUC}(\mathbb{B}_n)]=\mathrm{clos} (\mathcal{A}^{p}_s)$$ and
$$\mathcal{T}^b_{lin}[C_0(\mathbb{B}_n)]=\mathcal{T}^b[C_0(\mathbb{B}_n)]=\mathcal{K},$$
where $\mathrm{clos}(\mathcal{A}^{p}_s)$ denotes the norm closure of $\mathcal{A}^{p}_s$ and $\mathcal{K}$ denotes the set of compact operators on $L^p_a$.
\end{thm}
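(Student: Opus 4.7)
The plan is to string together the results already established—Proposition \ref{theorem2}, Lemma \ref{lemmaI}, and Theorem \ref{theorembergman}—with three standard auxiliary facts: (a) $T_\varphi\in \mathcal{A}^p_s$ for $\varphi\in \mathrm{BUC}(\mathbb{B}_n)$ (from \cite[Propositions 2.2 and 2.3]{wick2014}); (b) the Berezin transform of any bounded operator on $L_a^p$ is uniformly continuous with respect to the Bergman metric, i.e.\ lies in $\mathrm{BUC}(\mathbb{B}_n)$; and (c) $T_\varphi$ is compact on $L_a^p$ whenever $\varphi\in C_0(\mathbb{B}_n)$ (approximate $\varphi$ uniformly by continuous functions with compact support in $\mathbb{B}_n$, for which $T_\varphi$ is a limit of finite rank operators). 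The whole argument then divides into three short steps.

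First, apply Proposition \ref{theorem2} to the two translation invariant closed subalgebras $\mathcal{I}=\mathrm{BUC}(\mathbb{B}_n)$ and $\mathcal{I}=C_0(\mathbb{B}_n)$. In each case Lemma \ref{lemmaI} gives $\mathcal{I}'\subset \mathcal{I}$, which forces $\mathcal{T}^b(\mathcal{I})\subset \mathcal{T}^b_{lin}(\mathcal{I}')\subset \mathcal{T}^b_{lin}(\mathcal{I})$; the reverse inclusion is automatic, so $\mathcal{T}^b(\mathcal{I})=\mathcal{T}^b_{lin}(\mathcal{I})$ in both cases.

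Second, I would prove $\mathcal{T}^b_{lin}[\mathrm{BUC}(\mathbb{B}_n)]=\mathrm{clos}(\mathcal{A}^p_s)$. The inclusion $\subset$ is immediate from (a), since $\mathcal{A}^p_s$ is manifestly a linear subspace. For $\supset$, given $T\in \mathcal{A}^p_s$, fact (b) gives $\widetilde{T}\in \mathrm{BUC}(\mathbb{B}_n)$, so Theorem \ref{theorembergman} places $T$ inside $\mathcal{T}^b_{lin}(\mathrm{BUC}(\mathbb{B}_n)')$; Lemma \ref{lemmaI} embeds this in $\mathcal{T}^b_{lin}[\mathrm{BUC}(\mathbb{B}_n)]$, and norm closure completes the step.

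Third, I would handle $\mathcal{T}^b_{lin}[C_0(\mathbb{B}_n)]=\mathcal{K}$. The inclusion $\subset$ follows from (c) and norm-closedness of $\mathcal{K}$. For $\supset$, approximate a compact $T$ in norm by finite rank operators $T_N=\sum_j f_j\otimes g_j$ whose entries $f_j,g_j$ are polynomials (hence bounded). Each rank-one piece $f\otimes g$ is easily seen to belong to $\mathcal{A}^p_s$: one has $|\langle (f\otimes g)k_z,k_w\rangle|=|\langle k_z,g\rangle||\langle f,k_w\rangle|\lesssim (1-|z|^2)^{\frac{n+1}{2}}(1-|w|^2)^{\frac{n+1}{2}}$, and the two integrals in Definition \ref{s-weakly} then reduce, after plugging in $\|K_z\|_2\asymp (1-|z|^2)^{-\frac{n+1}{2}}$, to Forelli--Rudin type integrals of the same form as those handled inside Lemma \ref{bounded}, with an exponent $s/q$ (resp.\ $s/p$) strictly less than $1$ that produces both the initial finiteness and the $r\to\infty$ decay. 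Moreover,
$$\widetilde{T_N}(z)=(1-|z|^2)^{n+1}\sum_j f_j(z)\overline{g_j(z)}\in C_0(\mathbb{B}_n),$$
because $(1-|z|^2)^{n+1}\to 0$ at the boundary while the $f_j,g_j$ are bounded. Theorem \ref{theorembergman} and Lemma \ref{lemmaI} then yield $T_N\in \mathcal{T}^b_{lin}[C_0(\mathbb{B}_n)]$, and since $\mathcal{T}^b_{lin}[C_0(\mathbb{B}_n)]$ is closed in norm, $T\in \mathcal{T}^b_{lin}[C_0(\mathbb{B}_n)]$. The main technical nuisance in the whole proof is the verification of $s$-weak localization for the rank-one building blocks $f\otimes g$ above; it is routine but requires careful bookkeeping of the exponents $1-\frac{2s}{q(n+1)}$ and $1-\frac{2s}{p(n+1)}$ appearing in Definition \ref{s-weakly}.
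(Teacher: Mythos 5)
Your proposal is correct, and its overall architecture coincides with the paper's: first $\mathcal{T}^b(\mathcal{I})=\mathcal{T}^b_{lin}(\mathcal{I})$ for $\mathcal{I}=\mathrm{BUC}(\mathbb{B}_n)$ and $\mathcal{I}=C_0(\mathbb{B}_n)$ via Proposition \ref{theorem2} and Lemma \ref{lemmaI}; then the identification with $\mathrm{clos}(\mathcal{A}^p_s)$ via \cite[Propositions 2.2 and 2.3]{wick2014}, the fact that Berezin transforms of bounded operators lie in $\mathrm{BUC}(\mathbb{B}_n)$, and Theorem \ref{theorembergman}; and finally $\mathcal{K}=\mathcal{T}^b_{lin}[C_0(\mathbb{B}_n)]$ by exhibiting a dense subset of $\mathcal{K}$ consisting of $s$-weakly localized operators with $C_0$ Berezin transform.

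The one place you genuinely diverge is the choice of that dense subset. The paper takes $\mathfrak{F}=\mathrm{span}\{k_x\otimes k_y\}$ and verifies $s$-weak localization of $k_x\otimes k_y$ by dominating $|\langle k_z,k_y\rangle|\,|\langle k_x,k_w\rangle|$ by $\int_{\mathbb{B}_n}|\langle k_z,k_\xi\rangle|\,|\langle k_\xi,k_w\rangle|\,d\lambda(\xi)$ and splitting the integral into near and far pieces, invoking \cite[Lemma 2.1]{wick2014}. You instead use finite-rank operators with bounded analytic (polynomial) entries and the crude pointwise bound $|\langle (f\otimes g)k_z,k_w\rangle|\lesssim (1-|z|^2)^{\frac{n+1}{2}}(1-|w|^2)^{\frac{n+1}{2}}$; after inserting the weights $\|K_z\|_2^{1-\frac{2s}{q(n+1)}}/\|K_w\|_2^{1-\frac{2s}{q(n+1)}}$ and $d\lambda$, the integrand collapses to $(1-|z|^2)^{s/q}(1-|w|^2)^{-s/q}\,dv(w)$, which is integrable since $s<q$, and the required uniform tail decay follows by splitting into $|z|>\rho$ (where the prefactor $(1-|z|^2)^{s/q}$ is small) and $|z|\leqslant\rho$ (where $\mathbb{B}_n\setminus D(z,r)\subset\{|w|>\tanh(r-\tanh^{-1}\rho)\}$ and dominated convergence applies); the adjoint conditions are identical with $s/p$ in place of $s/q$. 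This verification is, if anything, shorter than the paper's, and the density of polynomial finite-rank operators in $\mathcal{K}$ is unproblematic. So the details you deferred as ``routine bookkeeping'' do close, and the argument is sound.
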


Before giving the proof of Theorem \ref{pbergthm}, we  mention here that the Berezin transform of any bounded linear operator on $L_a^p$ is in $\text{BUC}(\mathbb B_n)$, see \cite[Proposition 4.9]{Mit} and its proof, or Proposition 8.3 and Lemma 9.1 in \cite{suarez2005} if needed.

\begin{proof}[Proof of Theorem \ref{pbergthm}] First, we note that
$$\mathcal{T}^b_{lin}[\mathrm{BUC}(\mathbb{B}_n)]\subset\mathcal{T}^b[\mathrm{BUC}(\mathbb{B}_n)] \ \ \text{ and } \ \ \mathcal{T}^b_{lin}[C_0(\mathbb{B}_n)]\subset\mathcal{T}^b[C_0(\mathbb{B}_n)].$$
On the other hand,  it follows from   Lemma \ref{lemmaI} and Proposition  \ref{theorem2} that
$$\mathcal{T}^b[\mathrm{BUC}(\mathbb{B}_n)]\subset\mathcal{T}^b_{lin}[\mathrm{BUC}(\mathbb{B}_n)']
\subset\mathcal{T}^b_{lin}[\mathrm{BUC}(\mathbb{B}_n)]$$
and
$$\mathcal{T}^b[C_0(\mathbb{B}_n)]\subset\mathcal{T}^b_{lin}[C_0(\mathbb{B}_n)']\subset\mathcal{T}^b_{lin}[C_0(\mathbb{B}_n)].$$
Thus we obtain
$$\mathcal{T}^b_{lin}[\mathrm{BUC}(\mathbb{B}_n)]=\mathcal{T}^b[\mathrm{BUC}(\mathbb{B}_n)] \ \ \text{ and } \ \ \mathcal{T}^b_{lin}[C_0(\mathbb{B}_n)]=\mathcal{T}^b[C_0(\mathbb{B}_n)].$$
Moreover, it follows from  \cite[Propositions 2.2 and 2.3]{wick2014} that $$\mathcal{T}^b[\mathrm{BUC}(\mathbb{B}_n)]\subset\mathrm{clos}(\mathcal{A}^{p}_s).$$ Recall that the Berezin transform  $\widetilde{T}\in\mathrm{BUC}(\mathbb{B}_n)$ if $T\in\mathcal{A}^{p}_s$ (see the paragraph before the proof of Theorem \ref{pbergthm}). Therefore,   we deduce by Theorem \ref{theorembergman} that
$$\mathcal{T}^b[\mathrm{BUC}(\mathbb{B}_n)]=\mathrm{clos}(\mathcal{A}^{p}_s) .$$

In order to  finish the proof of this theorem, it remains to show that $$\mathcal{K}=\mathcal{T}^b_{lin}[C_0(\mathbb{B}_n)].$$ Let
$$\mathfrak{F}=\mathrm{span} \big\{k_x \otimes k_y:  x,y\in\mathbb{B}_n\big\}.$$
Since the linear span of the normalized reproducing kernels is dense in $L_a^p$ and $L_a^q$, we have that $\mathfrak{F}$ is dense in $\mathcal K$.
Now we are going to show $\mathfrak{F}\subset \mathcal{A}^{p}_s.$ To do this, we need only to show that $k_x \otimes k_y$ is $s$-weakly localized for all $x, y\in \mathbb B_n$.

In fact, for every $k_x \otimes k_y$ we have
$$|\langle (k_x \otimes k_y)k_z,k_w\rangle|=|\langle k_z,k_y\rangle|~|\langle k_x,k_w\rangle|.$$
Suppose that $x,y\in D(0,a)$ for some $a>0$, then we have by Lemma \ref{discrete} that
$$|\langle k_z,k_y\rangle|~|\langle k_x,k_w\rangle|\leqslant C_a |\langle k_z,k_\xi\rangle|~|\langle k_\xi,k_w\rangle|$$
 for all $\xi\in D(0,a)$,
where $C_a$ is a positive  constant depending only on $a$. This gives that
\begin{align*}
|\langle k_z,k_y\rangle|~|\langle k_x,k_w\rangle|
&\leqslant \frac{C_a}{\lambda(D(0,a))}\int_{D(0,a)} |\langle k_z,k_\xi\rangle|~|\langle k_\xi,k_w\rangle|d\lambda(\xi)\\
&\leqslant \frac{C_a}{\lambda(D(0,a))}\int_{\mathbb{B}_n} |\langle k_z,k_\xi\rangle|~|\langle k_\xi,k_w\rangle|d\lambda(\xi),
\end{align*}
where $\lambda(D(0, a))=\int_{D(0, a)} d\lambda$. Thus we have
\begin{align*}
&\sup_{z\in \mathbb{B}_n}\int_{\mathbb{B}_n\setminus D(z,r)}|\langle (k_x \otimes k_y)k_z,k_w\rangle|\frac{\|K_z\|_2^{1-\frac{2s}{q(n+1)}}}{\|K_w\|_2^{1-\frac{2s}{q(n+1)}}}d\lambda(w)\\
& \lesssim \sup_{z\in \mathbb{B}_n}\int_{\mathbb{B}_n\setminus D(z,r)}\int_{\mathbb{B}_n} |\langle k_z,k_\xi\rangle|~|\langle k_\xi,k_w\rangle|d\lambda(\xi)\frac{\|K_z\|_2^{1-\frac{2s}{q(n+1)}}}{\|K_w\|_2^{1-\frac{2s}{q(n+1)}}}d\lambda(w)\\
&= \sup_{z\in \mathbb{B}_n}\int_{\mathbb{B}_n\setminus D(z,r)}\int_{\mathbb{B}_n} |\langle k_z,k_\xi\rangle|~|\langle k_\xi,k_w\rangle|\frac{\|K_z\|_2^{1-\frac{2s}{q(n+1)}}}{\|K_\xi\|_2^{1-\frac{2s}{q(n+1)}}}
\frac{\|K_\xi\|_2^{1-\frac{2s}{q(n+1)}}}{\|K_w\|_2^{1-\frac{2s}{q(n+1)}}}d\lambda(\xi)d\lambda(w)\\
&= \sup_{z\in \mathbb{B}_n}\int_{\mathbb{B}_n\setminus D(z,r)}\int_{D(z,\frac{r}{2})} |\langle k_z,k_\xi\rangle|~|\langle k_\xi,k_w\rangle|\frac{\|K_z\|_2^{1-\frac{2s}{q(n+1)}}}{\|K_\xi\|_2^{1-\frac{2s}{q(n+1)}}}
\frac{\|K_\xi\|_2^{1-\frac{2s}{q(n+1)}}}{\|K_w\|_2^{1-\frac{2s}{q(n+1)}}}d\lambda(\xi)d\lambda(w)\\
& \ +\sup_{z\in \mathbb{B}_n}\int_{\mathbb{B}_n\setminus D(z,r)}\int_{\mathbb{B}_n\setminus D(z,\frac{r}{2})} |\langle k_z,k_\xi\rangle|~|\langle k_\xi,k_w\rangle|\frac{\|K_z\|_2^{1-\frac{2s}{q(n+1)}}}{\|K_\xi\|_2^{1-\frac{2s}{q(n+1)}}}
\frac{\|K_\xi\|_2^{1-\frac{2s}{q(n+1)}}}{\|K_w\|_2^{1-\frac{2s}{q(n+1)}}}d\lambda(\xi)d\lambda(w)\\
&:=I_1(r)+I_2(r).
\end{align*}

For $I_1(r)$, Fubini's theorem gives us
\begin{align*}
I_1(r)&=\sup_{z\in \mathbb{B}_n}\int_{\mathbb{B}_n\setminus D(z,r)}\int_{D(z, \frac{r}{2})} |\langle k_z,k_\xi\rangle|~|\langle k_\xi,k_w\rangle|\frac{\|K_z\|_2^{1-\frac{2s}{q(n+1)}}}{\|K_\xi\|_2^{1-\frac{2s}{q(n+1)}}}
\frac{\|K_\xi\|_2^{1-\frac{2s}{q(n+1)}}}{\|K_w\|_2^{1-\frac{2s}{q(n+1)}}}d\lambda(\xi)d\lambda(w)\\
&=\sup_{z\in \mathbb{B}_n}\int_{D(z,\frac{r}{2})}\int_{\mathbb{B}_n\setminus D(z,r)} |\langle k_z,k_\xi\rangle|~|\langle k_\xi,k_w\rangle|\frac{\|K_z\|_2^{1-\frac{2s}{q(n+1)}}}{\|K_\xi\|_2^{1-\frac{2s}{q(n+1)}}}
\frac{\|K_\xi\|_2^{1-\frac{2s}{q(n+1)}}}{\|K_w\|_2^{1-\frac{2s}{q(n+1)}}}d\lambda(w)d\lambda(\xi)\\
&\leqslant \sup_{z\in \mathbb{B}_n}\int_{D(z,\frac{r}{2})}\int_{\mathbb{B}_n\setminus D(\xi, \frac{r}{2})} |\langle k_z,k_\xi\rangle|~
|\langle k_\xi,k_w\rangle|\frac{\|K_z\|_2^{1-\frac{2s}{q(n+1)}}}{\|K_\xi\|_2^{1-\frac{2s}{q(n+1)}}}
\frac{\|K_\xi\|_2^{1-\frac{2s}{q(n+1)}}}{\|K_w\|_2^{1-\frac{2s}{q(n+1)}}}d\lambda(w)d\lambda(\xi)\\
&\leqslant\bigg[\sup_{\xi\in \mathbb{B}_n}\int_{\mathbb{B}_n\setminus D(\xi,\frac{r}{2})}|\langle k_\xi,k_w\rangle|\frac{\|K_\xi\|_2^{1-\frac{2s}{q(n+1)}}}{\|K_w\|_2^{1-\frac{2s}{q(n+1)}}}d\lambda(w)\bigg]\bigg[\sup_{z\in \mathbb{B}_n}
\int_{\mathbb B_n}|\langle k_z,k_\xi\rangle|\frac{\|K_z\|_2^{1-\frac{2s}{q(n+1)}}}{\|K_\xi\|_2^{1-\frac{2s}{q(n+1)}}}d\lambda(\xi)\bigg],
\end{align*}
where the first inequality follows from that $D(\xi, \frac{r}{2})\subset D(z, r)$ for $\xi\in D(z, \frac{r}{2})$.

To estimate $I_2(r)$, we note that
\begin{align*}
I_2(r)&=\sup_{z\in \mathbb{B}_n}\int_{\mathbb{B}_n\setminus D(z,r)}\int_{\mathbb{B}_n\setminus D(z,\frac{r}{2})} |\langle k_z,k_\xi\rangle|~|\langle k_\xi,k_w\rangle|\frac{\|K_z\|_2^{1-\frac{2s}{q(n+1)}}}{\|K_\xi\|_2^{1-\frac{2s}{q(n+1)}}}
\frac{\|K_\xi\|_2^{1-\frac{2s}{q(n+1)}}}{\|K_w\|_2^{1-\frac{2s}{q(n+1)}}}d\lambda(\xi)d\lambda(w)\\
&\leqslant \sup_{z\in \mathbb{B}_n}\int_{\mathbb{B}_n}\int_{\mathbb{B}_n\setminus D(z, \frac{r}{2})} |\langle k_z,k_\xi\rangle|~
|\langle k_\xi,k_w\rangle|\frac{\|K_z\|_2^{1-\frac{2s}{q(n+1)}}}{\|K_\xi\|_2^{1-\frac{2s}{q(n+1)}}}
\frac{\|K_\xi\|_2^{1-\frac{2s}{q(n+1)}}}{\|K_w\|_2^{1-\frac{2s}{q(n+1)}}}d\lambda(\xi)d\lambda(w)\\
&\leqslant \bigg[\sup_{z\in \mathbb{B}_n}\int_{\mathbb{B}_n\setminus D(z,\frac{r}{2})} |\langle k_z,k_\xi\rangle|\frac{\|K_z\|_2^{1-\frac{2s}{q(n+1)}}}{\|K_\xi\|_2^{1-\frac{2s}{q(n+1)}}}d\lambda(\xi)\bigg]\bigg[
\sup_{\xi\in \mathbb{B}_n}\int_{\mathbb{B}_n}|\langle k_\xi,k_w\rangle|\frac{\|K_\xi\|_2^{1-\frac{2s}{q(n+1)}}}{\|K_w\|_2^{1-\frac{2s}{q(n+1)}}}d\lambda(w)\bigg].
\end{align*}
 By  \cite[Lemma 2.1]{wick2014}, we get   that
\begin{align*}
&\lim_{r\rightarrow\infty}\sup_{z\in \mathbb{B}_n}\int_{\mathbb{B}_n\setminus D(z,r)}|\langle (k_x \otimes k_y)k_z,k_w\rangle|\frac{\|K_z\|_2^{1-\frac{2s}{q(n+1)}}}{\|K_w\|_2^{1-\frac{2s}{q(n+1)}}}d\lambda(w)\\
&\lesssim \lim_{r\rightarrow\infty}I_1(r)+\lim_{r\rightarrow\infty}I_2(r)=0.
\end{align*}
Using the same arguments as above, one can verify that $k_x\otimes k_y$ satisfies the other three conditions in Definition \ref{s-weakly}. Thus  we obtain that
$$k_x \otimes k_y\in \mathcal{A}^{p}_s,$$
and hence $\mathfrak{F}\subset\mathcal{A}^{p}_s$.

For any $T\in\mathfrak{F}\subset \mathcal{K}$, we have that $\widetilde{T}\in C_{0}(\mathbb{B}_n)$. Since $T\in\mathfrak{F}\subset \mathcal{A}^{p}_s$, Lemma \ref{lemmaI} and Theorem \ref{theorembergman} imply that
$$T\in\mathcal{T}^b_{lin}[C_0(\mathbb{B}_n)],$$
to get $\mathfrak{F}\subset\mathcal{T}^b_{lin}[C_0(\mathbb{B}_n)]$. This gives  that $$\mathcal{K}=\mathrm{clos}(\mathfrak{F})\subset\mathcal{T}^b_{lin}[C_0(\mathbb{B}_n)].$$
Since $\mathcal{T}^b_{lin}[C_0(\mathbb{B}_n)]\subset\mathcal{K}$, we finally  obtain that
$$\mathcal{K}=\mathcal{T}^b_{lin}[C_0(\mathbb{B}_n)].$$
This completes the proof of Theorem \ref{pbergthm}.
\end{proof}
Let us  make a remark for  the above theorem.\vspace{2mm}\\
\textbf{Remark.} As mentioned in Section \ref{Intro}, it was proved by Xia \cite[Theorem 1.5]{Xia2015} that
\begin{equation}\label{2bergmanthm}
\mathcal{T}^b[L^{\infty}(\mathbb B_n, dv)]=\mathrm{clos}(\mathcal{A}^{2}_s)=\mathcal{T}^b_{lin}[L^{\infty}(\mathbb B_n, dv)]
\end{equation}
 and  $\mathrm{clos}(\mathcal{A}^{2}_s)$ equals the $C^*$-algebra generated by  $\mathcal{A}^{2}_s$ for the case of  $L_a^2$. But Su\'{a}rez obtained in \cite[Theorem 7.3]{suarez} that
$$\mathcal{T}^b_{lin}[\mathrm{BUC}(\mathbb{B}_n)]=\mathcal{T}^b_{lin}[L^{\infty}(\mathbb B_n, dv)] \ \ \ \ \text{ and }\ \ \  \ \mathcal{T}^b[\mathrm{BUC}(\mathbb{B}_n)]=\mathcal{T}^b[L^{\infty}(\mathbb B_n, dv)]$$
hold on the $p$-Bergman space $L_a^p$. Therefore,  (\ref{2bergmanthm}) can be rewritten as:
$$\mathcal{T}^b[\mathrm{BUC}(\mathbb{B}_n)]=\mathrm{clos}(\mathcal{A}^{2}_s)=\mathcal{T}^b_{lin}[\mathrm{BUC}(\mathbb{B}_n)] \ \  \mathrm{on\ the \ Bergman\ space} \  L_a^2.$$
This means that Theorem \ref{pbergthm} extends  Xia's result to case of the $p$-Bergman space with $1<p<\infty$.\\

Let $1<p<\infty$ and $\mathrm{VMO}^p(\mathbb{B}_n)$ denote the collection  of vanishing mean oscillation functions on the unit ball (see Section 5 of \cite{Pa}). Denote
$$\mathrm{VMO}^p_{\infty}(\mathbb{B}_n)=\mathrm{VMO}^p(\mathbb{B}_n)\cap L^{\infty}(\mathbb B_n, dv).$$
Furthermore, \cite[Theorem 6.1]{Pa} tells us that the Hankel operator $H_{\varphi}$ is compact on $L_a^p$ ($1<p<\infty$) whenever $\varphi\in \mathrm{VMO}^p_{\infty}(\mathbb{B}_n)$. It follows that
$$T_{\varphi_1}T_{\varphi_2}-T_{\varphi_1 \varphi_2}=-H_{\overline{\varphi_1}}^*H_{\varphi_2}$$ is compact on $L_a^p$ if $\varphi_1$ and $ \varphi_2$ are both in $\mathrm{VMO}^p_{\infty}(\mathbb{B}_n).$

 We end this section by the following corollary, which gives a
characterization for the Toeplitz algebra with symbols in an algebra lying between $C_0(\mathbb{B}_n)$ and  $\mathrm{VMO}^p_{\infty}(\mathbb{B}_n)$.
\begin{cor}\label{vmoalgebra}
Let $1<p<\infty$  and $\mathcal{I}$ be an algebra such that
$$C_0(\mathbb{B}_n)\subset \mathcal{I}\subset \mathrm{VMO}^p_{\infty}(\mathbb{B}_n).$$
Then we have
$$\mathcal{T}^b(\mathcal{I})=\mathrm{clos}\{T_\varphi+K:  \varphi\in\mathcal{I}  \ \text{ and } \  K\in \mathcal{K} \}=\mathcal{T}^b_{lin}(\mathcal{I}),$$
where $\mathrm{clos}\{T_\varphi+K:  \varphi\in\mathcal{I}\ \text{ and } \ K\in \mathcal{K} \}$ denotes the norm  closure of $\{T_\varphi+K:  \varphi\in\mathcal{I}\ \text{ and } \ K\in \mathcal{K} \}$.
\end{cor}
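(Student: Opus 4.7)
The plan is to prove the chain of inclusions
\[
\mathcal{T}^b(\mathcal{I}) \subset \mathrm{clos}\{T_\varphi + K : \varphi \in \mathcal{I},\, K \in \mathcal{K}\} \subset \mathcal{T}^b_{lin}(\mathcal{I}) \subset \mathcal{T}^b(\mathcal{I}),
\]
from which the corollary follows. The last containment is trivial by definition. For the middle one, I would observe that $\mathcal{T}^b_{lin}(\mathcal{I})$ is a closed linear space that obviously contains each $T_\varphi$ with $\varphi \in \mathcal{I}$; moreover, since $C_0(\mathbb{B}_n) \subset \mathcal{I}$, Theorem \ref{pbergthm} gives $\mathcal{K} = \mathcal{T}^b_{lin}[C_0(\mathbb{B}_n)] \subset \mathcal{T}^b_{lin}(\mathcal{I})$. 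Hence $\mathcal{T}^b_{lin}(\mathcal{I})$ contains the set $\{T_\varphi + K : \varphi \in \mathcal{I},\, K \in \mathcal{K}\}$ and therefore its norm closure as well.

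The substantive content is the first inclusion. The crucial ingredient is the semi-commutator identity recorded immediately before the statement of the corollary: for any $\varphi_1, \varphi_2 \in \mathrm{VMO}^p_\infty(\mathbb{B}_n) \supset \mathcal{I}$,
\[
T_{\varphi_1} T_{\varphi_2} - T_{\varphi_1 \varphi_2} = -H_{\overline{\varphi_1}}^* H_{\varphi_2} \in \mathcal{K}.
\]
Using this together with the assumption that $\mathcal{I}$ is an algebra, I would show by induction on $m$ that any finite product $T_{\varphi_1} T_{\varphi_2} \cdots T_{\varphi_m}$ with $\varphi_j \in \mathcal{I}$ can be written as $T_{\varphi_1 \varphi_2 \cdots \varphi_m} + K$ for some $K \in \mathcal{K}$. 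The inductive step writes $T_{\varphi_1}\cdots T_{\varphi_{m-1}} = T_\psi + K'$ with $\psi = \varphi_1 \cdots \varphi_{m-1} \in \mathcal{I}$, multiplies on the right by $T_{\varphi_m}$, and invokes the semi-commutator identity once more; the residual term $K' T_{\varphi_m}$ lands in $\mathcal{K}$ because $\mathcal{K}$ is a two-sided ideal in the bounded operators on $L_a^p$. Taking finite linear combinations and norm limits then yields the desired inclusion $\mathcal{T}^b(\mathcal{I}) \subset \mathrm{clos}\{T_\varphi + K : \varphi \in \mathcal{I},\, K \in \mathcal{K}\}$.

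The only potential subtlety is ensuring that the symbol obtained at each inductive step stays inside $\mathcal{I}$, which is precisely guaranteed by the algebra hypothesis on $\mathcal{I}$; beyond that, every step is a routine assembly of facts already established in the paper, so I do not anticipate a genuine obstacle.
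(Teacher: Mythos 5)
Your proposal is correct and follows essentially the same route as the paper: the same chain of inclusions, the same use of $\mathcal{K}=\mathcal{T}^b_{lin}[C_0(\mathbb{B}_n)]$ from Theorem \ref{pbergthm} for the middle containment, and the same induction on the number of Toeplitz factors driven by the compactness of the semi-commutator for $\mathrm{VMO}^p_\infty$ symbols. The only (harmless) difference is that your induction establishes the slightly stronger exact decomposition $T_{\varphi_1}\cdots T_{\varphi_m}=T_{\varphi_1\cdots\varphi_m}+K$, whereas the paper only tracks membership in the norm closure.
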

\begin{proof}
First, we claim that
$$\mathcal{T}^b(\mathcal{I})\subset \mathrm{clos}\{T_\varphi+K:  \varphi \in\mathcal{I}\  \text{ and }\  K\in \mathcal{K} \}.$$
Let us assume this for the moment and we will give its proof later. As $\mathcal{T}_{lin}^b(\mathcal{I})\subset \mathcal{T}^b(\mathcal{I})$, it is enough to show that
$$\mathrm{clos}\{T_\varphi+K:  \varphi\in\mathcal{I} \ \text{ and } \  K\in \mathcal{K} \}\subset \mathcal{T}_{lin}^b(\mathcal{I}).$$
According to  Theorem \ref{pbergthm}, we have
$$\mathcal{K}=\mathcal{T}^b_{lin}[C_0(\mathbb{B}_n)],$$
to obtain
\begin{align*}
\mathrm{clos}\{T_\varphi+K:  \varphi\in\mathcal{I} \ \text{ and } \  K\in \mathcal{K} \}&= \mathrm{clos}\{T_\varphi+T_\psi:  \varphi\in\mathcal{I} \ \text{ and }\  \psi\in C_0(\mathbb{B}_n) \}\\
&\subset \mathrm{clos}\{T_\varphi: \varphi \in\mathcal{I} \} \ \  \  \quad\text{\big(since \ $ C_0(\mathbb{B}_n)\subset\mathcal{I}$\big)}\\
&=\mathcal{T}_{lin}^b(\mathcal{I}).
\end{align*}

In order to complete the proof, we shall prove the above claim by induction. First, we have
$$T_{\varphi_1} \in \mathrm{clos}\{T_\varphi +K:  \varphi\in\mathcal{I}  \ \text{ and } \  K\in \mathcal{K} \}$$
if  $\varphi_1\in \mathcal{I}$. Then we suppose that
$$T_{\varphi_1}\cdots T_{\varphi_{m-1}}\in \mathrm{clos}\{T_\varphi+K:  \varphi\in\mathcal{I}\  \text{ and }  \ K\in \mathcal{K} \}$$
 for any  functions $\varphi_1,\cdots, \varphi_{m-1}\in \mathcal{I}$.
Let  $\varphi_m$ be in $\mathcal{I}$, then  we have
$$T_{\varphi_1}\cdots T_{\varphi_{m-1}}T_{\varphi_m}=T_{\varphi_1}\cdots T_{\varphi_{m-2}} T_{\varphi_{m-1}\varphi_m}+T_{\varphi_1}\cdots T_{\varphi_{m-2}}(T_{\varphi_{m-1}}T_{\varphi_m}-T_{\varphi_{m-1}\varphi_m}).$$
Note that $T_{\varphi_{m-1}}T_{\varphi_m}-T_{\varphi_{m-1}\varphi_m}$ is compact on $L_a^p$, since  $\varphi_{m-1} $ and $\varphi_m$ are in  $\mathcal{I}\subset \mathrm{VMO}^p_{\infty}(\mathbb{B}_n)$. By the induction hypothesis, we have
$$T_{\varphi_1}\cdots T_{\varphi_{m-1}\varphi_m}\in \mathrm{clos}\{T_\varphi+K:  \varphi\in\mathcal{I} \ \text{ and } \ K\in \mathcal{K} \}.$$
This yields  that
$$T_{\varphi_1}\cdots T_{\varphi_{m-1}}T_{\varphi_m}\in \mathrm{clos}\{T_\varphi+K: \varphi\in\mathcal{I} \ \text{ and } \ K\in \mathcal{K} \},$$
to get
$$\mathcal{T}^b(\mathcal{I})\subset \mathrm{clos}\{T_\varphi+K:  \varphi \in\mathcal{I} \ \text{ and } \  K\in \mathcal{K} \}.$$
This proves the claim and so the proof of Corollary \ref{vmoalgebra} is finished.
\end{proof}
\vspace{3mm}

\subsection*{Acknowledgments} We would like to thank the reviewer for the constructive comments and suggestions that improved the
content of this paper.   We are  also grateful to Professor Jingbo Xia (SUNY-Buffalo) and Professor Kunyu Guo (Fudan University) for providing useful suggestions.  This work was partially supported by  NSFC (grant number: 11701052). The second author was partially supported by the Fundamental Research Funds for the Central Universities (grant numbers: 2020CDJQY-A039, 2020CDJ-LHSS-003).


\begin{thebibliography}{99}

\bibitem{Bauer} W. Bauer and  J. Isralowitz, \emph{Compactness characterization of operators in the Toeplitz algebra of the Fock space $F^p_{\alpha}$},  {\sl J. Funct. Anal.} {\bf 263} (2012), 1323-1355.


\bibitem{Englis} M. Engli\v{s}, \emph{Density of algebras generated by Toeplitz operator on Bergman space},  {\sl Ark. Mat.} {\bf 30} (1992), 227-243.

\bibitem{Robert} R. Fulsche, \emph{Correspondence theory on $p$-Fock spaces with applications to Toeplitz algebras},
{\sl J. Funct. Anal.} {\bf 279} (2020), Article 108661.

\bibitem{Hagger}R. Hagger, \emph{Essential commutants and characterizations of the Toeplitz algebra}, arXiv: 2002.02344.

\bibitem{wick2014} J. Isralowitz, M. Mitkovski  and  B. D. Wick, \emph{Localization and compactness in Bergman and Fock spaces},  {\sl Indiana Univ. Math. J.} {\bf 64} (2015), 1553-1573.

\bibitem{Mit} M. Mitkovski, D. Su\'{a}rez and B. D. Wick, \emph{The essential norm of operators on ${A^p_\alpha (\mathbb {B}_n)}$}, {\sl Integral Equ. Oper. Theory} {\bf75}  (2013),  197-233.




 \bibitem{Pa} J. Pau, R. Zhao and  K. Zhu, \emph{Weighted BMO and Hankel operators between Bergman spaces},  {\sl Indiana Univ. Math. J.}  {\bf 65} (2016), 1639-1673.



\bibitem{rudin} W. Rudin, \emph{Function Theory in the Unit Ball of $\mathbb{C}^n$}, Springer-Verlag, Berlin, 2008.

\bibitem{suarez2005} D. Su\'{a}rez, \emph{Approximation and $n$-Berezin transform of operators on the Bergman space},  {\sl J. Reine Angew. Math.} {\bf 581} (2005), 175-192.

\bibitem{suarez} D. Su\'{a}rez, \emph{The essential norm of operators in the Toeplitz algebra on $A^p(\mathbb{B}_n)$},  {\sl Indiana Univ. Math. J.} {\bf 56} (2007), 2185-2232.

\bibitem{XiaZheng} J. Xia and  D. Zheng, \emph{Localization and Berezin transform on the Fock space},  {\sl J. Funct. Anal.} {\bf264} (2013), 97-117.

\bibitem{Xia2015} J. Xia, \emph{Localization and the Toeplitz algebra on the Bergman space},  {\sl J. Funct. Anal.} {\bf 269} (2015), 781-814.

\bibitem{Xia2017} J. Xia, \emph{On the essential commutant of the Toeplitz algebra on the Bergman space},  {\sl J. Funct. Anal.} {\bf 272} (2017), 5191-5217.

\bibitem{Xia2018} J. Xia, \emph{A double commutant relation in the Calkin algebra on the Bergman space}, {\sl J. Funct. Anal.} {\bf274} (2018), 1631-1656.

\bibitem{Zhu} K. Zhu, \emph{Analysis on Fock Spaces}, Graduate Texts in Mathematics, vol. {\bf 263}, Springer, New York, 2012.

\bibitem{Zhu2} K. Zhu, \emph{Spaces of Holomorphic Functions in the Unit Ball},  Graduate Texts in Mathematics, vol. {\bf 226}, Springer-Verlag, New
York, 2005.

\end{thebibliography}
\end{document}